\renewcommand{\headrulewidth}{0pt}
\newfont{\aaa}{cmb10 at 19pt}
\newfont{\bbb}{cmb10 at 11pt}
\newtheorem{lemma}{Lemma}[section]
\newtheorem{theorem}{Theorem}[section]
\newtheorem{definition}{Definition}[section]
\newtheorem{rem}{Remark}[section]
\newtheorem{corollary}{Corollary}[section]
\newtheorem{proposition}{Proposition}[section]
\newcommand{\beq}{\begin{equation}}
\newcommand{\eeq}{\end{equation}}
\newcommand{\bey}{\begin{eqnarray}}
\newcommand{\eey}{\end{eqnarray}}
\newcommand{\beyy}{\begin{eqnarray*}}
\newcommand{\eeyy}{\end{eqnarray*}}
\numberwithin{equation}{section}
\def\@evenfoot{}
\def\@oddfoot{}
\newcommand{\Rmnum}[1]{\expandafter\@slowromancap\romannumeral #1@}
\begin{document}
\thispagestyle{empty} \thispagestyle{fancy} {
\fancyhead[RO,LE]{\scriptsize \bf 
} \fancyfoot[CE,CO]{}}
\renewcommand{\headrulewidth}{0pt}

\begin{center}
{\bf \LARGE An Extension of De Giorgi Class and Applications}

\vspace{3mm}

{\small    \textsc {Hongya GAO}$^1$ \quad \textsc{Aiping ZHANG}$^1$  \quad   \textsc {Siyu GAO}$^2$\footnote{Corresponding author, email: siyugao@my.unt.edu.}

\vspace{2mm}

{\small 1. College of Mathematics and Information Science, Hebei University, Baoding, 071002, China\\
2. Department of Mathematics, University of North Texas, Denton, Texas 76203, USA }}
\end{center}

\vspace{4mm}

\begin{center}
\begin{minipage}{135mm}
{\bf \small Abstract}. {\small We present an extension of the classical De Giorgi class, and then we show that functions in this new class are locally bounded and locally H\"older continuous. Some applications are given. As a first application, we give a regularity result for local minimizers $u:\Omega \subset \mathbb R^4 \rightarrow \mathbb R^4$ of a special class of polyconvex functionals  with splitting form in four dimensional Euclidean spaces. Under some structural conditions on the energy density, we prove that each component $u^\alpha$ of the local minimizer $u$ belongs to the generalized De Giorgi class, then one can derive that it is locally bounded and locally H\"older continuous. Our result can be applied to polyconvex integrals whose prototype is
$$
\int_\Omega \Big(\sum_{\alpha =1}^4 |Du^\alpha|^p + \sum _{\beta =1}^6 |({\rm adj}_2 Du )^\beta | ^q  +\sum_{\gamma =1}^4 |({\rm adj}_3 Du )^\gamma | ^r  +|\det Du|^s \Big ) \mathrm {d}x
$$
with suitable $p,q,r,s\ge 1$. As a second application, we consider a degenerate linear elliptic equation of the form
$$
-\mbox {div} (a(x)\nabla u)=-\mbox {div}F,
$$
with $0<a(x) \le \beta <+\infty$. We prove, by virtue of the generalized De Giorgi class, that any weak solution is locally bounded and locally H\"older continuous provided that $\frac 1 {a(x)}$ and $F(x)$ belong to some suitable locally integrable function spaces. As a third application, we show that our theorem can be applied in dealing with regularity issues of elliptic equations with non-standard grow conditions. As a fourth application we treat with quasilinear elliptic systems. Under suitable assumptions on the coefficients, we show that any of its weak solutions is locally bounded and locally H\"older continuous.

\vspace{2mm}

{\bf AMS Subject Classification (2020):}  35J20, 35J25, 35J47

{\bf Keywords:} De Giorgi Class, extension, local minimizer, variational integral, locally bounded, locally H\"older continuous.}
\end{minipage}
\end{center}

\thispagestyle{fancyplain} \fancyhead{}
\fancyhead[L]{\textit{}\\
} \fancyfoot{}

\vskip 5mm

\section{The classical De Giorgi class.}
It is well-known (see, for example, Chapter 7 in \cite{Giusti}) that the quasi-minima and $\omega$-minima of regular functionals of the calculus of variations are H\"older continuous functions. The main result is a version of the fundamental theorem of De Giorgi \cite{De-Giorgi} and Nash \cite{Nash} concerning the regularity of solutions of linear elliptic equations with discontinuous coefficients, a result that was later generalized among others by Lady\v{z}enskaya and Ural'ceva \cite{LU} to bounded solutions to nonlinear elliptic equations. De Giorgi's method relies on the fact that functions lies in the De Giorgi class are locally H\"older continuous.


\vspace{2mm}

We first recall the definition of De Giorgi class, see, for example, De Giorgi \cite{De-Giorgi}.

\begin{definition}\label{definition1} Let $\Omega\subset \mathbb R^n$, $n\ge 2$, be a bounded domain.
We say that $u\in W_{loc} ^{1,p} (\Omega)$, $p\le n$,  belongs to the De Giorgi class $DG_p^+(\Omega, p, y, y_*$, $\varepsilon, \kappa _0)$, $p>1$, $y$ and $\varepsilon>0$, $y_*$ and $\kappa_0 \ge 0$ if
\begin{equation}\label{DG+}
\int _{Q_{\sigma \rho} (x_0)} |D(u-k)_+ |^p dx \le y\int_{Q_\rho (x_0)}\left( \frac {u-k} {(1-\sigma )\rho} \right)_+ ^pdx +y_* \big| \{u>k\} \cap  \{Q_\rho (x_0)\} \big| ^{1-\frac p n +\varepsilon},
\end{equation}
for all $k\ge \kappa _0$, $\sigma \in (0,1)$, and all pairs of concentric cubes $Q_{\sigma \rho} (x_0) \subset Q_\rho (x_0) \subset \Omega$ centered at $x_0$,
where
$$
(u-k)_+ =(u-k)\wedge 0 = \max \{u-k,0\},
$$
and $Q_\rho (x_0)$, $\rho >0$, $x_0\in \Omega$, be the cube with side length $2 \rho$, with sides parallel to the coordinate axis and centered at $x_0$.
\end{definition}

In the sequel we denote $Q_\rho (x_0) =Q_\rho$ provided that no confusion may arise.

If we introduce
$$
A_{k,\rho}=\{x\in Q_\rho: u(x)>k\},
$$
then (\ref{DG+}) is equivalent to
\begin{equation}\label{DG+2}
\int _{A_{k,\sigma \rho}} |Du|^p dx \le y\int_{A_{k,\rho}} \left( \frac {u-k} {(1-\sigma)\rho} \right)^pdx +y_* \big| A_{k,\rho} \big| ^{1-\frac p n +\varepsilon}.
\end{equation}

One can define similarly $DG_p^- (\Omega, p, y, y_*,\varepsilon, \kappa_0)$ to be the class of functions $u$ such that $-u \in DG_p^+(\Omega, p, y, y_*,\varepsilon, \kappa_0)$. More explicitly, they are the functions in $W_{loc} ^{1,p} (\Omega)$ such that for all $k\le -\kappa _0$, all $\sigma \in (0,1)$, and all pairs of concentric cubes $Q_{\sigma \rho}\subset Q_\rho \subset \Omega$,
\begin{equation}\label{DG-}
\int _{B_{k,\sigma \rho}} |Du|^p dx \le y\int_{B_{k,\rho}} \left( \frac {k-u} { (1-\sigma )\rho} \right)^pdx +y_* \big| B_{k,\rho} \big| ^{1-\frac p n +\varepsilon},
\end{equation}
where
$$
B_{k,\rho} =\{x\in Q_\rho: u(x)<k\}.
$$

It is clear that if a function $u$ satisfies (\ref{DG+2}) or (\ref{DG-}) with some $\varepsilon >0$, it will verify them with any positive $\varepsilon ' \le \varepsilon$. Consequently, we shall always assume $\varepsilon \le \frac p n$.

We indicate by $DG_p(\Omega, p, y, y_*,\varepsilon, \kappa_0)$ the class of functions belonging both to $DG_p^+(\Omega, p,$ $y, y_*,\varepsilon, \kappa_0)$ and $DG_p^-(\Omega, p, y, y_*,\varepsilon, \kappa_0)$:
$$
DG_p (\Omega, p, y, y_*,\varepsilon, \kappa_0)= DG_p^+(\Omega, p, y, y_*,\varepsilon, \kappa_0) \cap DG_p^-(\Omega, p, y, y_*,\varepsilon, \kappa_0).
$$

We notice that (\ref{DG+2}) and (\ref{DG-}) are Caccioppoli type inequalities on super-/sub-level sets.
A rather surprising characteristic of De Giorgi class is that (\ref{DG+2}) and (\ref{DG-}) contain particularly much information deriving from the minimum properties of the function $u$, at least for what concerns its local boundedness and local H\"older continuity. The following proposition can be found, for example, in \cite{DiBenedetto} Theorems 2.1 and 3.1 and \cite{Giusti} Chapter 7.

\begin{proposition}\label{proposition 3}
Let $u\in DG_p (\Omega, p, y,y_*,\varepsilon ,\kappa_0)$ and $\sigma  \in (0,1)$. There exist a constant $C>1$ depending only upon the data and independent of $u$, such that for every pair of cubes $Q_{\sigma  \rho}\subset Q_\rho \subset \subset \Omega$,
$$
\|u\| _{L^\infty (Q_{\sigma  \rho})} \le \max \left\{y_*\rho^{n\varepsilon  }; \frac {C}{(1-\sigma ) ^{1/\varepsilon  } } \left(\frac 1 {|Q_\rho|} \int_{Q_\rho} |u|^p \mathrm {d}x \right) ^{\frac 1 p }  \right\},
$$
moreover, there exists $\tilde \alpha \in (0,1)$ depending only upon the data and independent of $u$, such that
$$
{\rm osc} (u,Q_\rho) \le C \max \left\{y_*\rho^{n\varepsilon }; \left(\frac \rho R\right) ^{\tilde \alpha} {\rm osc} (u,Q_R)  \right\},
$$
where
$$
{\rm osc} (u,Q_\rho) ={\rm esssup }_{Q_\rho} u -{\rm essinf} _{Q_\rho}u
$$
is the oscillation of $u$ over $Q_\rho$. Therefore, $u\in C_{loc} ^{0,\tilde \alpha_0} (\Omega)$ with $\tilde \alpha _0 =\tilde \alpha \wedge (n\varepsilon )$.
\end{proposition}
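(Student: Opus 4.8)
The plan is to establish both assertions by the classical De Giorgi iteration method, taking care at each stage to absorb the extra term $y_*\,|A_{k,\rho}|^{1-\frac pn+\varepsilon}$ that distinguishes the Caccioppoli-type inequalities (\ref{DG+2})--(\ref{DG-}) from the usual ones on level sets; full details of the unperturbed case are in the cited references (\cite{DiBenedetto}, Theorems 2.1 and 3.1, and \cite{Giusti}, Chapter 7). \textbf{Local boundedness.} Fix $Q_\rho=Q_\rho(x_0)$ and, for $j\ge0$, set $\rho_j=\sigma\rho+2^{-j}(1-\sigma)\rho$ (so $\rho_0=\rho$ and $\rho_j\downarrow\sigma\rho$) and $k_j=d\,(1-2^{-j})$ with $d>0$ to be chosen, and put $A_j=A_{k_j,\rho_j}$. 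Applying (\ref{DG+2}) with $k=k_{j+1}$ on the pair $Q_{\rho_{j+1}}\subset Q_{\rho_j}$, feeding the output into the Sobolev--Poincar\'e inequality for $(u-k_{j+1})_+$ (cut off between the two concentric cubes), and combining with Chebyshev's bound $|A_{j+1}|\le(k_{j+1}-k_j)^{-p}\int_{A_j}(u-k_j)_+^p\,dx$, one is led to a De Giorgi recursion
$$
Y_{j+1}\le C\,b^{\,j}\,Y_j^{1+\delta},\qquad Y_j:=\frac1{|Q_\rho|}\int_{A_j}\Big(\frac{(u-k_j)_+}{(1-\sigma)\rho}\Big)^p dx ,
$$
with $b>1$ and $\delta>0$ depending only on $n,p,\varepsilon$, valid once $d$ exceeds a fixed multiple of $y_*\rho^{n\varepsilon}$ --- which is exactly the mechanism producing the first alternative in the statement. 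By the fast geometric convergence lemma, $Y_0$ below a threshold depending only on $b,\delta$ forces $Y_j\to0$, and the choice
$$
d=C\max\Big\{y_*\rho^{n\varepsilon};\ \frac1{(1-\sigma)^{1/\varepsilon}}\Big(\frac1{|Q_\rho|}\int_{Q_\rho}u_+^p\,dx\Big)^{1/p}\Big\}
$$
achieves this, so $(u-d)_+\equiv0$, i.e. $u\le d$, on $Q_{\sigma\rho}$. The same argument for $-u\in DG_p^-$ gives the two-sided $L^\infty$ estimate; in particular $u$ is locally bounded, so the oscillation considered below is finite.

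\textbf{Two auxiliary lemmas for the oscillation bound.} First, a density (measure-to-pointwise) lemma: there is $\nu_0\in(0,1)$, depending only on the data, such that whenever $u\le M$ on $Q_r$, $H>0$ exceeds a fixed multiple of $y_*r^{n\varepsilon}$, and $|\{u>M-H\}\cap Q_r|\le\nu_0|Q_r|$, one has $u\le M-\tfrac H2$ on $Q_{r/2}$. This is itself a De Giorgi iteration, now with fixed ambient radius shrinking from $r$ to $r/2$ and truncation levels increasing from $M-H$ to $M-\tfrac H2$. Second, the classical De Giorgi isoperimetric inequality (see \cite{Giusti}): for $k<l$,
$$
(l-k)\,\big|\{u\ge l\}\cap Q_r\big|^{1-\frac1n}\le\frac{C\,r^{n}}{\big|\{u\le k\}\cap Q_r\big|}\int_{\{k<u<l\}\cap Q_r}|Du|\,dx .
$$
Combining this, via H\"older's inequality, with the Caccioppoli inequality (\ref{DG+2}) on the dyadic strips $\{h_{s-1}<u<h_s\}$, where $h_s=M-2^{-s}H$ and $H=M-k$, one finds that if $|\{u>k\}\cap Q_{r/2}|\le\tfrac12|Q_{r/2}|$ then the measures $\mu_s:=|\{u\ge h_s\}\cap Q_{r/2}|$ obey $\mu_\nu\le C\,\nu^{-\gamma_0}|Q_{r/2}|$ for some $\gamma_0>0$ depending only on the data; hence $\mu_\nu\le\nu_0|Q_{r/2}|$ once $\nu$ is large (depending only on the data and on $H$ dominating $y_*r^{n\varepsilon}$ through the finitely many steps).

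\textbf{Oscillation decay and conclusion.} Put $M={\rm esssup}_{Q_r}u$, $m={\rm essinf}_{Q_r}u$, $k=\tfrac12(M+m)$; at least one of $\{u>k\}$, $\{u<k\}$ fills at most half of $Q_{r/2}$, say the first (else replace $u$ by $-u$, which lies in $DG_p$ as well). By the isoperimetric lemma there is a level $h_\nu=M-2^{-\nu-1}(M-m)$ with $|\{u\ge h_\nu\}\cap Q_{r/2}|\le\nu_0|Q_{r/2}|$, and the density lemma (with $r$ replaced by $r/2$) then gives $u\le M-2^{-\nu-2}(M-m)$ on $Q_{r/4}$, whence ${\rm osc}(u,Q_{r/4})\le(1-2^{-\nu-2})\,{\rm osc}(u,Q_r)$. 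Carrying the lower-order term through both lemmas refines this to
$$
{\rm osc}(u,Q_{r/4})\le\max\big\{\theta\,{\rm osc}(u,Q_r);\ C\,y_*r^{n\varepsilon}\big\},\qquad\theta:=1-2^{-\nu-2}<1 ,
$$
for every $Q_r\subset\subset\Omega$. A standard elementary iteration lemma applied to the nondecreasing function $r\mapsto{\rm osc}(u,Q_r)$ upgrades this to ${\rm osc}(u,Q_\rho)\le C\max\{y_*\rho^{n\varepsilon};\ (\rho/R)^{\tilde\alpha}\,{\rm osc}(u,Q_R)\}$ with $\tilde\alpha=\log(1/\theta)/\log 4$, from which $u\in C^{0,\tilde\alpha_0}_{loc}(\Omega)$ with $\tilde\alpha_0=\tilde\alpha\wedge(n\varepsilon)$ is immediate.

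\textbf{Main obstacle.} The delicate point is the isoperimetric measure-shrinking: extracting the quantitative rate $\mu_\nu\le C\nu^{-\gamma_0}|Q_{r/2}|$ from the isoperimetric and Caccioppoli inequalities, and --- throughout the argument --- bookkeeping the term $y_*\rho^{n\varepsilon}$ so that at each stage the current level gap is weighed against $y_*\rho^{n\varepsilon}$ and the geometric iteration is run only while the gap dominates. This is precisely where the generalized class (with $y_*>0$) demands more than the classical one ($y_*=0$), and it is what makes the scale $y_*\rho^{n\varepsilon}$ surface in the final maximum exactly as stated. The remaining estimates are routine, with all constants depending only on $n$, $p$, $y$ and $\varepsilon$.
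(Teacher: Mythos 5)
Your proposal is correct in outline and is essentially the classical De Giorgi argument; note, however, that the paper itself gives no proof of Proposition \ref{proposition 3} (it is quoted from DiBenedetto and Giusti), so the natural in-paper comparison is with the proof of the generalized Theorem \ref{theorem 21} in Section 2, which is exactly this argument adapted to exponents $Q\in[p,p^*)$. The two proofs share the same skeleton: the Caccioppoli inequalities (\ref{DG+2})--(\ref{DG-}) on level sets, a De Giorgi iteration for local boundedness (the paper outsources this step to the iteration of Cupini--Leonetti--Mascolo in Lemma \ref{locally bounded}, while you run it directly on the normalized quantities $Y_j$ with the fast-convergence Lemma \ref{lemma 1.81}), the isoperimetric/dyadic-level measure-shrinking estimate (your bound $\mu_\nu\le C\nu^{-\gamma_0}|Q_{r/2}|$ is precisely the paper's (\ref{3.25})), the oscillation reduction with the alternative governed by the lower-order scale (your condition that the level gaps dominate $y_*r^{n\varepsilon}$ matches the paper's threshold $\operatorname{osc}(u,2R)\ge c\,2^{\nu+1}R^\tau$ in Lemma \ref{lemma 4}), and the algebraic iteration Lemma \ref{the new lemma}. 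Where you genuinely diverge is the middle step: the paper, following Giusti, proves the quantitative sup estimate (\ref{3.17}) by iterating the weighted quantity $\varphi(k,t)=U(k,t)\,|A_{k,t}|^{\gamma}$ with $\gamma(1+\gamma)=\varepsilon$, and then applies that estimate at the level $k_\nu$ so that the measure smallness furnished by (\ref{3.25}) enters directly through a power of $|A_{k_\nu,R}|$; you instead invoke a DiBenedetto-style density (critical-mass) lemma --- small measure of $\{u>M-H\}$ in $Q_r$ forces $u\le M-\tfrac H2$ on $Q_{r/2}$ once $H$ dominates $y_*r^{n\varepsilon}$ --- which costs a second De Giorgi iteration but avoids the $U|A|^{\gamma}$ bookkeeping and makes the role of the perturbation term more transparent. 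Both routes are standard and yield the stated conclusions; the only soft spot in your write-up is that the precise form of the constants (the factor $(1-\sigma)^{-1/\varepsilon}$ and the scale $y_*\rho^{n\varepsilon}$) is asserted through the choice of $d$ rather than derived, but this is consistent with the normalization of the quoted statement and is routine to verify.
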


The above proposition illustrates that functions in the De Giorgi class are locally bounded and locally H\"older continuous in $\Omega$. This result was first proved by De Giorgi in his famous paper \cite{De-Giorgi}, which opened the way to the regularity of solutions of elliptic equations with bounded measurable coefficients, and for minima of regular functionals in the Calculus of Variations. De Giorgi's theorem was later generalized by various authors, so as to cover the most general solutions of nonlinear equations in divergence form. We note in particular the papers by Stampacchia \cite{Stampacchia1,Stampacchia2,Stampacchia3} and the book by Lady\v{z}enskaya and Ural'ceva \cite{LU}.
Almost at the same time, a different proof of the regularity of solutions to parabolic and elliptic equations was given by Nash \cite{Nash}.
Slightly later, Moser \cite{moser} proved Harnack's inequality, thus extending to solutions of linear equations in divergence form a classical result for harmonic functions. Starting from Harnack's inequality, Moser gave a new proof of the H\"older continuity of solutions of elliptic equations.
The extension of the method of De Giorgi to minima (and quasi-minima) of functionals, independently of their Euler equation, was made by Giaquinta and Giusti \cite{GG}, after Frehse \cite{Frehse} has studied a particular case, under rather restrictive hypothesis.
Harnack's inequality was proved by Di Benedetto and Trudinger \cite{BT} for functions in De Giorgi class, and hence for quasi-minima of integral functionals
$$
{\cal F} (u,\Omega) =\int_\Omega f(x,u(x),Du(x))dx .
$$
For some recent developments of De Giorgi class and its applications, we refer the reader to \cite{SkrypnikandVoitovych,SkrypnikandVoitovych2,Cozzi,Castro,Zacher}.

\section{An extension.}
In this section, we shall give an extension of the classical De Giorgi class, and prove that it is closely related the regularity properties of the function $u$, including local boundedness and local H\"older continuity properties.

In the following, for $1< p\le n$, we shall use the symbol $p^*$ which is defined as: $p^*=\frac {np}{n-p}$ if $p<n$, and $p^*=$ any $\nu >p$ for $p=n$.

We give the following

\begin{definition}\label{definition 2}
We say that $u\in W_{loc} ^{1,p} (\Omega)$ belongs to the generalized De Giorgi class $GDG_{p}^+(\Omega, p, Q, y, y_*,\varepsilon ,\kappa_0)$, $1< p\le n$, $p\le Q<p^*$, $y$ and $\varepsilon>0$, $y_*$ and $\kappa _0\ge 0$,  if
\begin{equation}\label{eDG+}
\int _{A_{k,\sigma \rho}} |Du|^p dx \le y\int_{A_{k,\rho}} \left( \frac {u-k} { (1-\sigma)\rho} \right)^Q dx +y_* \big| A_{k,\rho} \big| ^{1-\frac p n +\varepsilon},
\end{equation}
for all $k\ge \kappa_0 $, $\sigma \in (0,1)$, and all pairs of concentric cubes $Q_{\sigma \rho} (x_0) \subset Q_\rho (x_0) \subset \Omega$ centered at $x_0$.
\end{definition}

We can define similarly $GDG_{p}^- (\Omega, p, Q,y, y_*,\varepsilon ,\kappa_0)$ to be the class of functions $u$ such that $-u \in GDG_p^+ (\Omega, p, Q,y, y_*,\varepsilon ,\kappa_0)$. More explicitly, they are the functions in $W_{loc} ^{1,p} (\Omega)$ such that for all $k\le -\kappa _0$, all $\sigma \in (0,1)$, and all pairs of concentric cubes $Q_{\sigma \rho}\subset Q_\rho \subset \Omega$,
\begin{equation}\label{eDG-}
\int _{B_{k,\sigma \rho}} |Du|^p dx \le y\int_{B_{k,\rho}} \left( \frac {k-u} {(1-\sigma )\rho} \right)^Q dx +y_* \big| B_{k,\rho} \big| ^{1-\frac p n +\varepsilon}.
\end{equation}

We shall indicate by $GDG_{p}(\Omega, p, Q,y, y_*,\varepsilon ,\kappa_0)$ the class of functions belonging both to $GDG_{p}^+$ $(\Omega, p, Q,y, y_*,\varepsilon ,\kappa_0)$ and $GDG_{p}^-(\Omega, p, Q,y, y_*,\varepsilon ,\kappa_0)$:
\begin{equation}\label{definition for GDG-1}
\begin{array}{llll}
&\displaystyle GDG_{p} (\Omega, p, Q,y, y_*,\varepsilon ,\kappa_0)\\
= &\displaystyle GDG_{p}^+(\Omega, p, Q,y, y_*,\varepsilon ,\kappa_0) \cap GDG_{p}^-(\Omega, p, Q,y, y_*,\varepsilon ,\kappa_0).
\end{array}
\end{equation}

It is clear that $GDG_{p}(\Omega, p, p,y, y_*,\varepsilon ,\kappa_0) =DG_p(\Omega, p,y, y_*,\varepsilon ,\kappa_0)$. If no confusion may arise, $GDG_{p}(\Omega, p, Q,y, y_*,\varepsilon ,\kappa_0)$ will be abbreviated as $GDG_p$.


We remark that the difference between (\ref{DG+2}) and (\ref{eDG+}) (similarly (\ref{DG-}) and (\ref{eDG-})) is that the power $p$ in the first integral in the right hand side is replaced by a power $Q$ which is greater than or equals to $p$ but smaller than $p^*$. It is obvious that (\ref{eDG+}) is weaker than (\ref{DG+2}) (similarly (\ref{eDG-}) is slightly weaker than (\ref{DG-})).


\vspace{2mm}

The main result of this paper is the following

\begin{theorem}\label{theorem 21}
Let $u \in GDG_{p} (\Omega, p, Q,y, y_*,\varepsilon,\kappa_0)$ for $1<p\le n$ and some $Q \in [p, p^*)$, then $u$ is locally bounded and locally H\"older continuous in $\Omega$.
\end{theorem}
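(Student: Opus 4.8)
The plan is to reduce the generalized De Giorgi condition to the classical one by absorbing the extra exponent $Q-p$ into the measure term, and then invoke Proposition \ref{proposition 3}. The key observation is that on the super-level set $A_{k,\rho}$ the integrand $\left(\frac{u-k}{(1-\sigma)\rho}\right)^Q$ in \eqref{eDG+} is a product of $\left(\frac{u-k}{(1-\sigma)\rho}\right)^p$ with $\left(\frac{u-k}{(1-\sigma)\rho}\right)^{Q-p}$, so once we have an \emph{a priori} bound on $\sup_{Q_\rho}u$ (which is exactly the first task), the second factor is controlled by a constant times $\rho^{-(Q-p)}$ times a power of that sup. Thus the main work splits into two stages: first establish local boundedness directly from \eqref{eDG+}–\eqref{eDG-} by a De Giorgi iteration adapted to the exponent $Q$, and then, knowing $u\in L^\infty_{loc}$, show that $u$ lies (locally) in a classical De Giorgi class $DG_p$ with adjusted structural constants, whence Hölder continuity follows from Proposition \ref{proposition 3}.

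For the boundedness step I would run the standard truncation iteration. Fix $Q_R\subset\subset\Omega$, set $\rho_j=\frac{R}{2}(1+2^{-j})$, $k_j=k^*(2-2^{-j})$ for a level $k^*$ to be chosen, and $A_j=A_{k_j,\rho_j}$. Using Sobolev's inequality on the cutoff function $\eta(u-k_j)_+$ together with \eqref{eDG+}, one gets a recursive inequality of the form
\begin{equation*}
\fint_{Q_{\rho_{j+1}}}(u-k_{j+1})_+^p\,dx\;\le\; \frac{C\,2^{\lambda j}}{(k^*)^{\theta}}\left(\fint_{Q_{\rho_j}}(u-k_j)_+^p\,dx\right)^{1+\delta} \;+\;(\text{lower order from }y_*),
\end{equation*}
where $\delta>0$ comes from the gain $1-\frac pn+\varepsilon$ in the measure term and from $p^*>Q$ (this is precisely where $Q<p^*$ is used: the Sobolev embedding exponent $p^*$ beats $Q$, leaving room for an iteration gain), and the $y_*$-contribution is handled exactly as in the classical case, producing the $y_*\rho^{n\varepsilon}$ term in the final bound. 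The fast-geometric-convergence lemma (De Giorgi's iteration lemma, e.g. \cite{Giusti} Ch.~7 or \cite{DiBenedetto}) then forces $Y_j\to0$ for $k^*$ large, giving $\|u\|_{L^\infty(Q_{R/2})}\le C\max\{y_*R^{n\varepsilon};(\fint_{Q_R}|u|^p)^{1/p}\}$, and symmetrically for $-u$.

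With $M:=\|u\|_{L^\infty(Q_R)}<\infty$ in hand, the Hölder step is short: for any cube $Q_\rho\subset Q_R$ and any $k\ge\kappa_0$, on $A_{k,\rho}$ we have $0\le u-k\le M$, hence
\begin{equation*}
\left(\frac{u-k}{(1-\sigma)\rho}\right)^Q \;=\;\left(\frac{u-k}{(1-\sigma)\rho}\right)^p\left(\frac{u-k}{(1-\sigma)\rho}\right)^{Q-p}\;\le\;\left(\frac{u-k}{(1-\sigma)\rho}\right)^p\left(\frac{M}{(1-\sigma)\rho}\right)^{Q-p}.
\end{equation*}
Plugging this into \eqref{eDG+} shows $u\in DG_p^+(Q_R,\,p,\,\tilde y,\,y_*,\,\varepsilon,\,\kappa_0)$ with $\tilde y=y\,(M/\rho)^{Q-p}\cdot$(const)\,--\,strictly speaking the constant now depends on $R$ and $M$, but that is harmless since it is still independent of $u$ on fixed subdomains and De Giorgi's theorem is local. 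The same argument gives $-u\in DG_p^+$, so $u\in DG_p$ locally, and Proposition \ref{proposition 3} yields the oscillation decay and hence $u\in C^{0,\tilde\alpha_0}_{loc}(\Omega)$.

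The main obstacle is the first (boundedness) stage: one must verify that the exponent $Q$, while larger than $p$, is still small enough relative to $p^*$ that the Sobolev inequality produces a genuine super-linear recursion with a positive gain $\delta$, and one must correctly track the scaling in $\rho$ and the powers of $2^j$ so that the iteration lemma applies; the treatment of the $y_*$ measure term alongside the new $Q$-term (combining two inhomogeneous contributions in the same recursion) requires some care but is routine. Once boundedness is secured, the reduction to the classical class and the appeal to Proposition \ref{proposition 3} are essentially immediate.
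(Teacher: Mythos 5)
Your first stage (local boundedness) is correct and is essentially what the paper does: after Young's inequality with exponents $\frac{p^*}{Q}$ and $\frac{p^*}{p^*-Q}$ the $Q$-term in \eqref{eDG+} is dominated by a $p^*$-term plus a measure term, and a De Giorgi truncation iteration (the paper invokes the one of Cupini--Leonetti--Mascolo in Lemma \ref{locally bounded}) gives the $L^\infty_{loc}$ bound; your direct iteration with exponent $Q$ is the same mechanism, and $Q<p^*$ is indeed what produces the gain.

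The gap is in your second stage. Writing $\left(\frac{u-k}{(1-\sigma)\rho}\right)^Q\le\left(\frac{u-k}{(1-\sigma)\rho}\right)^p\left(\frac{M}{(1-\sigma)\rho}\right)^{Q-p}$ does not place $u$ in the classical class $DG_p$: membership requires a single constant $y$ valid for \emph{all} pairs of concentric cubes and all $\sigma\in(0,1)$, whereas your effective constant is $\tilde y = y\,M^{Q-p}\big((1-\sigma)\rho\big)^{-(Q-p)}$, which depends on the cube radius $\rho$ (and on $\sigma$) and blows up as $\rho\to 0$ whenever $Q>p$; it is not a dependence ``on $R$ and $M$ only''. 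Since H\"older continuity is a small-scale statement, this is fatal: in Proposition \ref{proposition 3} both the constant $C$ and the exponent $\tilde\alpha$ depend on $y$ (in the oscillation argument the integer $\nu$ is chosen in terms of $y$, and the reduction factor $1-2^{-\nu-2}$ degenerates to $1$ as $y\to\infty$), so a radius-dependent $y(\rho)\sim\rho^{-(Q-p)}$ destroys any uniform oscillation decay. To control the extra factor $(u-k)^{Q-p}/\rho^{\,Q-p}$ uniformly you would need to know already that $u-k\lesssim\rho^{\alpha}$ on the relevant level sets, i.e. precisely the H\"older estimate being proved, so the reduction is circular. This is exactly why the paper does not absorb the excess power into the structural constant but reruns the De Giorgi oscillation machinery keeping the exponent $Q$ throughout: the sup estimate \eqref{3.17} and the level-set decay \eqref{3.25} then carry an extra additive term $cR^\tau$ (with $\tau$ fixed by $\frac{np-Q}{p(n-1)}n\varepsilon=Q\tau$), the dichotomy ``either the oscillation drops by a fixed factor or ${\rm osc}(u,2R)\le c\,2^{\nu+1}R^\tau$'' is established, and Lemma \ref{the new lemma} closes the iteration. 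Your boundedness step stands; the H\"older step must be redone along these lines rather than by reduction to $DG_p$.
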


The above theorem illustrates that, functions in the generalized De Giorgi class $GDG_{p}$ $(\Omega, p, Q,y, y_*,\varepsilon, \kappa _0)$ has also remarkable regularity properties as De Giorgi class does, in particular, they are locally bounded and locally H\"older continuous.

\vspace{2mm}

We notice that in the above Theorem \ref{theorem 21} we have restricted ourselves to the case $1<p\le n$, since if $p>n$, then any function $u\in W_{loc} ^{1,p} (\Omega)$ is trivially in $C_{loc} ^{0,\tilde \alpha } (\Omega)$ for some $0<\tilde \alpha <1$ by Sobolev Imbedding Theorem, i.e., it is automatically a H\"older continuous function.

\vspace{2mm}

We should mention that in (\ref{eDG+}) and (\ref{eDG-}) we have restricted ourselves to $p \le Q<p^*$. One may wonder if $u\in GDG_{p} (\Omega, p, Q,y, y_*,\varepsilon, \kappa _0)$ is also locally bounded and locally H\"older continuous for the case $Q=p^*$. Despite some efforts, we can not prove this result. Fortunately, the case $Q<p^*$ is enough for our purposes to derive some regularity results of minimizers of some variational integrals,  as well as weak solutions of some elliptic equations and systems, see Sections 3, 4, 5 and 6.

\vspace{2mm}

We remark that, in the proof of Theorem \ref{theorem 21} we borrow some ideas from \cite{De-Giorgi,Giusti}.

\vspace{2mm}

In the following we shall denote by $c(\cdots)$ a constant depending only on the quantities, whose value may vary from line to line.

\vspace{2mm}

We divide the proof of Theorem \ref{theorem 21} into several lemmas.

\begin{lemma}\label{locally bounded}
Let $u\in GDG_{p} (\Omega, p, Q, y, y_*,\varepsilon,\kappa _0)$ for $1<p\leq n$ and some $Q\in [p,p^*) $. Then $u$ is locally bounded in $\Omega$.
\end{lemma}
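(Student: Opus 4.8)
The plan is to run the standard De Giorgi iteration on super-level sets, carefully tracking how the exponent $Q$ (rather than $p$) in the first term on the right-hand side of (\ref{eDG+}) affects the estimates, and then repeat the argument for $-u$ to control $\|u\|_{L^\infty}$ from below. Fix a cube $Q_R(x_0)\Subset\Omega$; by scaling and translation we may take $x_0=0$. For a level $k\ge\kappa_0$ and radii $\rho/2\le r<s\le\rho$ with $\rho\le R$, choose a cutoff $\eta\in C_c^\infty(Q_s)$ with $\eta\equiv 1$ on $Q_r$, $0\le\eta\le 1$, $|D\eta|\le c/(s-r)$. The first step is to combine the Caccioppoli-type inequality (\ref{eDG+}) with the Sobolev embedding applied to $\eta(u-k)_+\in W_0^{1,p}$: writing $\|\eta(u-k)_+\|_{L^{p^*}}^p\le c\int_{A_{k,s}}(|Du|^p+|D\eta|^p(u-k)_+^p)$, one controls $\int_{A_{k,s}}|Du|^p$ by the right-hand side of (\ref{eDG+}) (with $\sigma\rho=r$, $\rho=s$ there, or more precisely by using intermediate radii) and bounds $\int_{A_{k,s}}(u-k)_+^Q$ by $\big(\sup_{Q_s}(u-k)_+\big)^{Q-p}\int_{A_{k,s}}(u-k)_+^p$ when $Q>p$ — this is the one genuinely new twist, and it forces us to iterate on the quantity $\sup(u-k)_+$ itself rather than only on $L^p$ norms. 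A cleaner route, which I expect to be the one actually needed, is: since $Q<p^*$, interpolate $\|(u-k)_+\|_{L^Q(A_{k,s})}$ between $L^p$ and $L^{p^*}$ and absorb the $L^{p^*}$ piece into the left-hand side of the Sobolev inequality after multiplying by a small power of $|A_{k,s}|$; the condition $Q<p^*$ is exactly what makes the interpolation exponent on $|A_{k,s}|$ strictly positive, which is what drives the iteration.

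Concretely, the second step is to derive, for $h>k\ge\kappa_0$, a recursive inequality for $a_j:=|A_{k_j,\rho_j}|$ and/or for $u_j:=\big(\fint_{Q_{\rho_j}}(u-k_j)_+^p\big)^{1/p}$ along geometric sequences $k_j=\ell(2-2^{-j})$, $\rho_j=\frac{\rho}{2}(1+2^{-j})$. On $A_{h,\rho}$ one has $(u-k)_+\ge h-k$, so $|A_{h,s}|(h-k)^{p^*}\le\int_{A_{k,s}}(u-k)_+^{p^*}\le c\big(\text{RHS of Sobolev step}\big)$, and plugging in (\ref{eDG+}) gives a bound of the form
$$
|A_{h,s}|\le \frac{c}{(h-k)^{p^*}(s-r)^{p}}\Big[\big(\sup_{Q_s}(u-k)_+\big)^{Q-p}\int_{A_{k,s}}(u-k)_+^{p}+y_*|A_{k,s}|^{1-\frac pn+\varepsilon}\Big]^{p^*/p},
$$
or, in the purely measure-theoretic formulation, an inequality $Y_{j+1}\le C\,b^{j}\,Y_j^{1+\delta}$ with $\delta>0$ coming from the gap $p^*-Q>0$ and from the exponent $\varepsilon>0$ in the $y_*$-term. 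The third step is the standard fast-geometric-convergence lemma (e.g.\ Lemma 4.1 in \cite{DiBenedetto} or Chapter 7 of \cite{Giusti}): if $Y_0$ is smaller than a threshold depending only on $C,b,\delta$ — which we arrange by taking $\ell$ large, of size $\max\{y_*\rho^{n\varepsilon},\,c(\fint_{Q_\rho}(u-\kappa_0)_+^p)^{1/p}\}$ up to the $(1-\sigma)$ factors — then $Y_j\to 0$, giving $\sup_{Q_{\rho/2}}u\le 2\ell+\kappa_0$. Applying the same to $-u\in GDG_p^-$ bounds $u$ from below, and a covering argument yields local boundedness on all of $\Omega$.

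The main obstacle, and the place to be careful, is precisely the handling of the term $\int_{A_{k,\rho}}\big((u-k)_+/((1-\sigma)\rho)\big)^Q\,dx$ with $Q>p$: unlike the classical case $Q=p$, this term is superlinear in $(u-k)_+$, so a naive iteration on $L^p$-averages does not close. One must either (i) carry $M_j:=\sup_{Q_{\rho_j}}(u-k_j)_+$ through the recursion and exploit that $M_j\le M_0$ is a priori bounded once we know $u\in L^\infty_{loc}$ — but that is circular unless done on a fixed large cube where an a priori bound is first extracted — or, more robustly, (ii) use $Q<p^*$ to write $\|(u-k)_+\|_{L^Q}^Q\le\|(u-k)_+\|_{L^p}^{p\theta}\|(u-k)_+\|_{L^{p^*}}^{p^*(1-\theta)}|A_{k,\rho}|^{\text{(positive power)}}$ with the $L^{p^*}$ factor reabsorbed by Young's inequality into the Sobolev left-hand side. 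Route (ii) is self-contained and I would adopt it; the bookkeeping of the resulting exponents (checking that the power of $|A_{k,\rho}|$ is positive, using $\varepsilon\le p/n$) is routine once set up, and it is what makes the restriction $Q<p^*$ indispensable — consistent with the authors' remark that they cannot handle $Q=p^*$.
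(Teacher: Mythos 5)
Your overall strategy (De Giorgi iteration on super-level sets, with the gain coming from $Q<p^*$ and $\varepsilon>0$, then the fast-convergence lemma, and the same for $-u$) is the right circle of ideas, but the concrete mechanism you commit to does not close, and this is a genuine gap rather than routine bookkeeping. Route (i) you rightly discard as circular. Route (ii) --- split $\int_{A_{k,\rho}}(u-k)_+^Q$ by H\"older between $L^p$ and $L^{p^*}$ with a strictly positive power of $|A_{k,\rho}|$, then ``reabsorb the $L^{p^*}$ piece by Young into the Sobolev left-hand side'' --- fails on the upper part of the range of $Q$. Indeed (take $p<n$), any splitting $\int_{A}(u-k)_+^Q\le\|(u-k)_+\|_{L^p(A)}^{a}\|(u-k)_+\|_{L^{p^*}(A)}^{b}|A|^{1-\frac{a}{p}-\frac{b}{p^*}}$ with $a+b=Q$ and positive exponent on $|A|$ forces $b>\frac{n(Q-p)}{p}$, whereas Young absorption into the Sobolev left-hand side, which carries the $L^{p^*}$ norm to the power $p$ (and over the smaller cube), requires $b<p$; these are compatible only when $Q<p+\frac{p^2}{n}$, and $p+\frac{p^2}{n}<p^*$. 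So for $Q\in[\,p+\frac{p^2}{n},\,p^*)$ the $L^{p^*}$ factor appears with a power at least as large as on the left, and no smallness of $|A_{k,\rho}|$ (nor the standard radii-iteration lemma) can absorb it: the superlinearity has to be fed into a genuinely nonlinear recursion. Your sketch asserts such a recursion $Y_{j+1}\le Cb^{j}Y_j^{1+\delta}$ but never derives it --- the only derivation you display is the circular route (i) --- so the core analytic step of the proof is missing.

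For comparison, the paper goes the opposite way: since $Q<p^*$, Young's inequality gives $t^{Q}\le t^{p^*}+c$, so (\ref{eDG+}) implies the Caccioppoli inequality with the critical power $p^*$ plus the term $y_*|A_{k,\rho}|^{1-\frac{p}{n}+\varepsilon}$, which is inequality (2.6) of \cite{Cupini-Leonetti-Mascolo}; the De Giorgi iteration proved there (which handles the critical exponent thanks to $\varepsilon>0$) is then quoted for boundedness from above, and applied to $-u$ for the bound below. If you want a self-contained argument in the spirit of your proposal, the workable device is the one the paper uses later when proving the quantitative bound (\ref{3.17}): apply the Sobolev inequality at the exponent $Q_*=\frac{nQ}{n+Q}$ (so that the Sobolev conjugate of $Q_*$ is $Q$, and $Q<p^*$ is exactly $Q_*<p$), estimate $\int|D(\eta(u-k)_+)|^{Q_*}$ by H\"older against $\bigl(\int_{A_{k,\rho}}|Du|^{p}\bigr)^{Q_*/p}|A_{k,\rho}|^{1-Q_*/p}$, insert (\ref{eDG+}), and combine with the Chebyshev bound $|A_{k,\rho}|\le (k-h)^{-Q}\int_{A_{h,\rho}}(u-h)^{Q}$ to obtain the recursion for $U(k,t)|A_{k,t}|^{\gamma}$ with exponent $1+\gamma>1$; no absorption is needed there, and that is where the hypothesis $Q<p^*$ genuinely enters.
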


\begin{proof} We notice that if $u\in GDG_{p}^+(\Omega, p, Q, y, y_*,\varepsilon, \kappa_0)$, then Young inequality with exponents $\frac {p^*} Q$ and $\frac {p^*}{p^*-Q}$ allows us to estimate
$$
\begin{array}{llll}
&& \displaystyle \int _{A_{k,\sigma \rho}} |Du|^p dx \\
&\le &\displaystyle  y\int_{A_{k,\rho}} \left( \frac {u-k} {(1-\sigma)\rho} \right)^Q dx +y_* \big| A_{k,\rho} \big| ^{1-\frac p n +\varepsilon}\\
&\le &\displaystyle c\left( \int_{A_{k,\rho}} \left( \frac {u-k} {(1-\sigma)\rho} \right)^{p^*} dx+|A_{k,\rho}| + \big| A_{k,\rho} \big| ^{1-\frac p n +\varepsilon} \right)\\
&\le &\displaystyle c \left( \int_{A_{k,\rho}} \left( \frac {u-k} {(1-\sigma)\rho} \right)^{p^*} dx+ \big| A_{k,\rho} \big| ^{1-\frac p n +\varepsilon} \right) ,
\end{array}
$$
here we have used the facts $\varepsilon \le \frac p n $ and $|A_{k,\rho}|\le |\Omega|$, which imply
$$
|A_{k,\rho}| = |A_{k,\rho}| ^{\frac p n -\varepsilon} |A_{k,\rho}| ^{1-\frac p n +\varepsilon} \le |\Omega| ^{\frac p n -\varepsilon} |A_{k,\rho}| ^{1-\frac p n +\varepsilon}.
$$
We shall use this fact repeatedly in the sequel. Thus $u$ satisfies the inequality (2.6) in \cite{Cupini-Leonetti-Mascolo} (the only difference between the above inequality and (2.6) in \cite{Cupini-Leonetti-Mascolo} is that cubes in place of balls. In the following we will not distinguish between cubes and balls). It has been proved in \cite{Cupini-Leonetti-Mascolo}, by using De Giorgi's iteration method, that any function satisfying the above inequality is locally bounded from above.

Analogously, if $u\in GDG_{p}^- (\Omega, p, Q, y, y_*,\varepsilon,\kappa _0) $, then it is locally bounded from below.

The locally boundedness result follows since (\ref{definition for GDG-1}).
\end{proof}

The following is a technical lemma, which can be found, for example, in  Lemma 7.1 in \cite{Giusti}.

\begin{lemma}\label{lemma 1.81}
Let $ \alpha>0$  and let $ \left\{x_{i}\right\}$  be a sequence of real positive numbers such that
$$
x_{i+1} \leq C B^{i} x_{i}^{1+\alpha}
$$
with $ C>0$ and $ B>1$. If
$$
x_{0} \leq C^{-\frac{1}{\alpha}} B^{-\frac{1}{\alpha^{2}}},
$$
then we have
$$
x_{i} \leq B^{-\frac{i}{\alpha}} x_{0},
$$
and hence in particular
$$
\lim _{i \rightarrow \infty} x_{i}=0.
$$
\end{lemma}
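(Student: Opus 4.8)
The plan is to prove the quantitative estimate $x_i \le B^{-i/\alpha} x_0$ for every $i\ge 0$ by induction on $i$, and then read off the vanishing of the limit as an immediate corollary. The base case $i=0$ is trivial since $B^0=1$.

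For the inductive step I would assume $x_i \le B^{-i/\alpha} x_0$ and substitute this bound into the recursion. Using $x_j>0$ and monotonicity of $t\mapsto t^{1+\alpha}$,
$$
x_{i+1} \le C B^i x_i^{1+\alpha} \le C B^i \big( B^{-i/\alpha} x_0 \big)^{1+\alpha} = C\, B^{\,i - i(1+\alpha)/\alpha}\, x_0^{1+\alpha} = C\, B^{-i/\alpha}\, x_0^{1+\alpha},
$$
where the last equality is the elementary identity $i - i(1+\alpha)/\alpha = -i/\alpha$. It remains to compare the right-hand side with the target bound $B^{-(i+1)/\alpha} x_0$: dividing by $x_0>0$ and by $B^{-i/\alpha}>0$, the desired inequality $x_{i+1}\le B^{-(i+1)/\alpha}x_0$ is equivalent to $C\, x_0^{\alpha} \le B^{-1/\alpha}$, i.e. (raising to the power $1/\alpha$) to $x_0 \le C^{-1/\alpha} B^{-1/\alpha^2}$, which is precisely the hypothesis. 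This closes the induction. The point worth emphasizing is that the smallness condition needed to push the induction from step $i$ to step $i+1$ is independent of $i$ — the powers of $B$ cancel in the ratio — which is exactly why a single assumption on $x_0$ propagates through all steps.

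Finally, since $B>1$ and $\alpha>0$ we have $B^{-i/\alpha}\to 0$ as $i\to\infty$, hence $0< x_i \le B^{-i/\alpha} x_0 \to 0$, so $\lim_{i\to\infty} x_i = 0$. The argument is entirely elementary; there is no genuine obstacle, the only care required being the bookkeeping of the exponents of $B$ and the observation that the $i$-dependence in the ratio test disappears.
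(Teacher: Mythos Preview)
Your proof is correct and is exactly the standard induction argument for this lemma. The paper itself does not supply a proof but simply cites Lemma~7.1 in Giusti's book, where the argument given is precisely the one you wrote.
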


We next prove

\begin{lemma}\label{proposition 2}
Let $u(x)$ be a locally bounded function, verifying there exists $c>0$, such that for every $0< r <\rho\leq R$, $k\ge \kappa_0\ge 0$ and $p\le Q <p^*$,
\begin{equation}\label{1.6}
\int_{A_{k, r}}|D u|^{p} dx \leq  c \int_{A_{k, \rho}} \Big( \frac{u-k}{\rho-r} \Big)^{Q} d x +c |A_{k, \rho}|^{1-\frac{p}{n}+\varepsilon},
\end{equation}
then
\begin{equation}\label{3.17}
\begin{aligned}
\sup _{Q_{\frac{R}{2}}} (u-\kappa _0) \leq c \left(\frac{1}{R^{\frac{(np-Q)n}{p(n-1)}}} \int_{A_{\kappa _{0}, R}}\left(u-\kappa _{0}\right)^{Q} d x\right)^{\frac{1}{Q}}\left(\frac{\left|A_{\kappa _{0}, R}\right|}{R^{ \frac{(np-Q)n}{p(n-1)}}}\right)^{\frac{\gamma}{Q}} +c R^{\tau},
\end{aligned}
\end{equation}
where  $\gamma$ is the positive solution of the equation
$$
\gamma^{2}+\gamma=\varepsilon,
$$
and $\tau$ is a small positive number depending on $n,p,Q$.
\end{lemma}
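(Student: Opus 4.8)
\textit{Sketch of proof.} This is a De Giorgi–type $L^\infty$ bound, so the plan is to run De Giorgi's iteration on super-level sets; the novelties imposed by (\ref{1.6}) are that its right-hand side carries the power $Q$ rather than $p$ and that one must also carry along the measure $|A_{k,\rho}|$ in order to reach the sharp form (\ref{3.17}). Fix $Q_R\subset\subset\Omega$ and a parameter $d>0$ to be chosen later; for $h=0,1,2,\dots$ set
\[
\rho_h=\tfrac R2+\tfrac R{2^{h+1}},\qquad k_h=\kappa_0+d-\tfrac d{2^{h}},
\]
so $\rho_h\downarrow R/2$, $k_h\uparrow\kappa_0+d$, $\rho_h-\rho_{h+1}=R\,2^{-(h+2)}$, $k_{h+1}-k_h=d\,2^{-(h+1)}$, and put
\[
v_h=\int_{A_{k_h,\rho_h}}(u-k_h)^Q\,dx,\qquad \mu_h=|A_{k_h,\rho_h}|,\qquad z_h=v_h\,\mu_h^{\gamma},
\]
with $\gamma>0$ the prescribed root of $\gamma^2+\gamma=\varepsilon$. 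The goal is to show $z_h\to0$ for a suitable $d$: indeed, if this holds but $\sup_{Q_{R/2}}(u-\kappa_0)>d$, choosing $d'>d$ with $|A_{\kappa_0+d',R/2}|>0$ one has $A_{\kappa_0+d',R/2}\subset A_{k_{h+1},\rho_{h+1}}$ and $u-k_{h+1}\ge d'-d$ there, whence $z_{h+1}\ge (d'-d)^Q|A_{\kappa_0+d',R/2}|^{1+\gamma}>0$ for all $h$, a contradiction; so $\sup_{Q_{R/2}}(u-\kappa_0)\le d$.

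\textit{The iteration step.} The engine is the Sobolev inequality coupled with (\ref{1.6}). Take a cut-off $\eta_h$, $\eta_h\equiv1$ on $Q_{\rho_{h+1}}$, supported in a cube between $Q_{\rho_{h+1}}$ and $Q_{\rho_h}$, with $|D\eta_h|\le c\,2^h/R$; applying Sobolev's inequality to $(u-k_{h+1})_+\eta_h\in W_0^{1,p}$ and estimating the resulting $\int|Du|^p$ by (\ref{1.6}) at level $k_{h+1}$ gives
\[
\Big(\int_{A_{k_{h+1},\rho_{h+1}}}(u-k_{h+1})^{p^*}\Big)^{p/p^*}\le c\,B_1^{h}\Big[\tfrac1{R^Q}v_h+\mu_h^{1-\frac pn+\varepsilon}+\tfrac1{R^p}\int_{A_{k_{h+1},\rho_h}}(u-k_{h+1})^p\Big],
\]
the last integral being controlled, by Hölder with exponent $Q/p$, by $v_h^{p/Q}\mu_h^{1-p/Q}$. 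Then Hölder with exponent $p^*/Q$ (here $Q<p^*$ is essential) turns the $p^*$-integral back into $v_{h+1}$ at the price of the factor $\mu_{h+1}^{1-Q/p^*}$, and Chebyshev's inequality yields both $\mu_{h+1}\le (d2^{-h-1})^{-Q}v_h$ and $\mu_{h+1}\le\mu_h$. Substituting, expanding the $Q/p$-th power and using $v_h\le v_0$, $\mu_h\le\mu_0$ to align exponents, one obtains a recursion
\[
z_{h+1}\le c\,B^{h}\,\Lambda\,z_h^{\,1+\gamma},\qquad B=B(n,p,Q)>1,
\]
in which the measure-exponent produced by the inhomogeneous term $\mu_h^{1-\frac pn+\varepsilon}$ matches against the prescribed power $\gamma$ precisely through $\gamma(1+\gamma)=\varepsilon$ — this is what forces the choice of $\gamma$ and makes the iteration exponent $1+\gamma>1$ — while a bookkeeping of all powers of $R,d,v_0,\mu_0$ shows that the leading part of $\Lambda$ is of the form $c\,R^{-\theta\varepsilon}d^{-Q\gamma}$ with $\theta=\frac{(np-Q)n}{p(n-1)}$ (which reduces to $R^{-n}$ when $Q=p$), the rest of $\Lambda$ being a faster-decaying-in-$d$ term originating from estimating $\mu_h^{1-\frac pn+\varepsilon}$ purely by Chebyshev.

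\textit{Choice of $d$ and conclusion.} By Lemma \ref{lemma 1.81} applied with $x_h=z_h$ and $\alpha=\gamma$: if $z_0\le (c\Lambda)^{-1/\gamma}B^{-1/\gamma^2}$, then $z_h\to0$, hence $\sup_{Q_{R/2}}(u-\kappa_0)\le d$. Because $v_0=\int_{A_{\kappa_0,R}}(u-\kappa_0)^Q$ and $\mu_0=|A_{\kappa_0,R}|$ are independent of $d$, and $\Lambda\simeq R^{-\theta\varepsilon}d^{-Q\gamma}$ plus a lower-order piece, this smallness condition can be met, and its smallest admissible $d$ is then
\[
d=c\Big(\tfrac1{R^{\theta}}\!\int_{A_{\kappa_0,R}}(u-\kappa_0)^Q\Big)^{1/Q}\Big(\tfrac{|A_{\kappa_0,R}|}{R^{\theta}}\Big)^{\gamma/Q}+c\,R^{\tau},
\]
the additive $c\,R^{\tau}$, $\tau=\tau(n,p,Q)>0$ small, coming from the lower-order piece together with $|A_{\kappa_0,R}|\le|Q_R|\simeq R^n$. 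This is (\ref{3.17}); the standing assumption that $u$ is locally bounded guarantees $v_0<\infty$ and $d<\infty$.

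\textit{Main difficulty.} The two genuinely delicate points are: (i) checking that the iteration exponent strictly exceeds $1$ — equivalently, that the Sobolev–Poincaré surplus survives the replacement of $p$ by $Q$ on the right of (\ref{1.6}); this is exactly where $Q<p^*$ enters, and it is where the method breaks down at $Q=p^*$, in agreement with the remark following Theorem \ref{theorem 21}; and (ii) the careful accounting of the powers of $R$, $d$ and $|A_{\kappa_0,R}|$ along the iteration needed to pin down the precise exponents $\theta$, $\gamma$ (and $\tau$) rather than merely admissible ones. The remaining ingredients — the Sobolev/Caccioppoli coupling, the two Hölder steps, the Chebyshev bounds, and the deduction of $\sup_{Q_{R/2}}(u-\kappa_0)\le d$ from $z_h\to0$ — are routine.
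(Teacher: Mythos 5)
Your proposal is correct and follows essentially the same De Giorgi iteration as the paper: the same combined quantity $\varphi=U(k,t)\,|A_{k,t}|^{\gamma}$ with $\gamma(1+\gamma)=\varepsilon$, the same algebraic iteration lemma (Lemma \ref{lemma 1.81}), and the same final choice of $d$ with the additive $cR^{\tau}$ term and the exponent $\frac{(np-Q)n}{p(n-1)}$. The only, inessential, difference is the direction of the Sobolev--H\"older coupling: you embed $W^{1,p}\hookrightarrow L^{p^*}$ and then H\"older down from $p^*$ to $Q$, whereas the paper applies Sobolev at the exponent $Q_*=\frac{nQ}{n+Q}<p$ (i.e. $W^{1,Q_*}\hookrightarrow L^{Q}$) and H\"older up from $Q_*$ to $p$ — two equivalent ways of exploiting the hypothesis $Q<p^*$.
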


\begin{proof}
We can suppose $\kappa_{0}=0$,  and thus (\ref{1.6}) is satisfied for every $k\ge 0$.
For $\frac{1}{2} \leq r< \rho \leq R \leq 1 $, we let $\eta(x) $ be a function of class $ C_{0}^{\infty}\left(Q_ {\frac{r+ \rho }{2}}\right)$  with  $\eta=1 $  on  $ Q_r$  and  $|D \eta| \leq \frac{4}{\rho-r } $.  Setting $ \zeta=\eta(u-k)_+$. By the Sobolev Imbedding Theorem, we have
\begin{equation}\label{3.18}
\begin{split}
& \int_ {A_{k,r}}(u-k)^{Q} \mathrm {d} x \le \int_{A_{k,\frac{\rho+r}{2}}} \zeta^{Q} \mathrm {d} x \\
\leq & \Big( \int_ {A_{k,\frac {\rho+r}{2}}} |D(\eta(u-k))|^{Q_* }\mathrm {d} x \Big)^{\frac{Q}{Q_*}} \\
\leq & 2^Q \Big( \int_ {A_{k,\frac{\rho+r}{2}}}| (u-k)D\eta |^{Q_*} \mathrm {d} x + \int_ {A_{k,\frac{\rho+r}{2}}}| \eta Du|^{Q_*} \mathrm {d} x \Big)^{\frac{Q}{Q_*}},
\end{split}
\end{equation}
where $Q_* =\frac{nQ}{n+Q}$. By H\"older inequality with exponents $\frac{Q}{Q_*}$ and  $\frac{Q}{Q-Q_*}$ we obtain
\begin{equation}\label{3.19}
\begin{split}
& \int_ {A_{k,\frac{\rho+r}{2}}}| (u-k)D\eta |^{Q_*} \mathrm {d} x \\
\leq&  c \int_ {A_{k,\rho}}\Big( \frac{u-k}{\rho-r} \Big)^{Q_*} \mathrm {d} x \\
\leq & c \left( \int_ {A_{k,\rho}}\Big(\frac{u-k}{\rho-r}\Big)^{Q} \mathrm {d} x \right)^{\frac{Q_*}{Q}} |A_{k,\rho}|^{1-\frac{Q_*}{Q}}.
\end{split}
\end{equation}
The condition $Q<p^*$ is equivalent to $Q_*<p$. Using H\"older inequality with exponents $\frac p {Q_*}$ and $\frac {p}{p-Q_*}$ and taking into account the inequality (\ref{1.6}) we obtain
\begin{equation}\label{3.20}
\begin{split}
& \int_ {A_{k,\frac{\rho+r}{2}}}|\eta Du|^{Q_*} \mathrm {d} x\\
\leq & c \Big( \int_ {A_{k,\frac{\rho+r}{2}}}|Du|^{p} \mathrm {d} x\Big )^{\frac{Q_*}{p}} |A_{k,\rho}|^{1-\frac{Q_*}{p}} \\
\leq & c \Big(\int_{{A}_{k, \rho}} \Big( \frac{u-k}{\rho-r} \Big )^{Q} \mathrm {d} x +|A_{k, \rho}|^{1-\frac{p}{n}+\varepsilon}\Big)^{\frac{Q_*}{p}}|A_{k,\rho}|^{1-\frac{Q_*}{p}} \\
\leq &  c  \Big( \int_{{A}_{k, \rho}} \Big( \frac{u-k}{\rho-r} \Big )^{Q} \mathrm{d}x \Big) ^{\frac{Q_*}{p}} |A_{k,\rho}|^{1-\frac{Q_*}{p}}+|A_{k,\rho}|^{1- \left(\frac p n -\varepsilon \right) \frac {Q_*}{p} }.
\end{split}
\end{equation}
By (\ref{3.18}), (\ref{3.19}) and (\ref{3.20}) we get for every $ r<\rho \leq R$,
\begin{equation}\label{3.21}
\begin{split}
& \int_ {A_{k,r}}(u-k)^{Q} \mathrm {d} x \\
\leq & c\left\{\Big( \int_ {A_{k,\rho}}\Big(\frac{u-k}{\rho-r}\Big )^{Q} \mathrm {d} x \Big)^{\frac{Q_*}{Q}} |A_{k,\rho}|^{1-\frac{Q_*}{Q}} \right. \\
& \left. +\Big( \int_{{A}_{k, \rho}}\Big( \frac{u-k}{\rho-r} \Big) ^{Q} \mathrm{d}x \Big)^{\frac{Q_*}{p}} |A_{k,\rho}|^{1-\frac{Q_*}{p}}+|A_{k,\rho}|^{1- \left(\frac p n -\varepsilon \right) \frac {Q_*}{p}}
\right\}^{\frac{Q}{Q_*}}  \\
\leq & c \Big( \frac{1}{\rho-r} \Big)^Q \Big(\int_ {A_{k,\rho}}(u-k)^{Q} \mathrm {d} x\Big) |A_{k,\rho}|^{\frac Q n}  \\
& + c \Big( \frac{1}{\rho-r} \Big) ^{\frac{Q^2}{p}} \Big(\int_ {A_{k,\rho}}(u-k)^{Q} \mathrm {d} x \Big)^{\frac{Q}{p}}|A_{k,\rho}|^{1+\frac{Q}{n}-\frac{Q}{p}}+c |A_{k,\rho}|^{1+\frac{Q}{p} \varepsilon}.
\end{split}
\end{equation}
Denote
$$
U(k,t) =\int_{A_{k,t}} (u-k) ^Q {\rm d} x.
$$
It is obvious that $U(\cdot, \rho)$ is non-increasing and $U(k,\cdot)$ is non-decreasing. For each $h<k$ and $r < \rho$, one has
\begin{equation}\label{1.12222}
\begin{array}{llll}
\displaystyle
U(h,\rho)
&\displaystyle=\int_{A_{h,\rho}} (u-h)^Q {\rm d}x \ge \int_{A_{k,\rho}} (u-h)^Q {\rm d}x\\
& \displaystyle\ge (k-h) ^Q |A_{k,\rho}| \ge (k-h) ^Q |A_{k,r}| .
\end{array}
\end{equation}
It is no loss of generality to assume that
$$
U(k,t) \le 1 \ \mbox { and } \ |A_{k,t}|\le 1.
$$
We take $\varepsilon$ small enough such that
\begin{equation}\label{1.14}
\varepsilon< \frac p Q \left( 1-\frac{Q}{p*}\right),
\end{equation}
this implies
\begin{equation}\label{1.15}
{1+\frac{Q}{n}-\frac{Q}{p}}>{\frac{Q}{p} \varepsilon}.
\end{equation}
Let $\tau $ satisfy
\begin{equation*}\label{definitioin for tau}
\frac{np-Q}{p(n-1)}n\varepsilon=Q\tau.
\end{equation*}
Using (\ref{1.14}) and (\ref{1.15}), we get from (\ref{3.21}) and (\ref{1.12222})  that
\begin{equation}\label{3.1111}
\begin{split}
U(k, r) & \leq c\Big( \frac{1}{\rho-r} \Big )^Q U(k, \rho) |A_{k,\rho}|^{\frac{Q}{n}} \\
& + c \Big( \frac{1}{\rho-r} \Big) ^{\frac{Q^2}{p}} U(k, \rho)^{\frac{Q}{p}}|A_{k,\rho}|^{1+\frac{Q}{n}-\frac{Q}{p}}+c |A_{k,\rho}|^{1+\frac{Q}{p} \varepsilon} \\
& \leq c\Big( \frac{1}{\rho-r} \Big )^Q U(h, \rho) |A_{h,\rho}|^{\frac{Q}{n}} \\
& \quad   +c \Big( \frac{1}{\rho-r} \Big) ^{\frac{Q^2}{p}} U(h, \rho)^{\frac{Q}{p}}|A_{h,\rho}|^{1+\frac{Q}{n}-\frac{Q}{p}}+c|A_{h,\rho}|^{\frac{Q}{p} \varepsilon}\cdot|A_{k,\rho}| \\
& \leq c \left( \frac{1}{\rho-r} \right )^{\frac{Q^2}{p}} U(h, \rho) |A_{h,\rho}|^{1+\frac{Q}{n}-\frac{Q}{p}}+c |A_{h,\rho}|^{\frac{Q}{p} \varepsilon}U(h, \rho) \Big(\frac{1}{k-h}\Big )^Q\\
& \leq c \left[\Big( \frac{1}{\rho-r} \Big)^{\frac{Q^2}{p}} +\Big( \frac{1}{k-h} \Big)^Q \right]U(h, \rho) |A_{h,\rho}|^{\frac{Q}{p} \varepsilon}  \\
& \leq c \left[ \Big(\frac{\rho^{\frac{p\tau}{Q}}}{\rho-r}  \Big)^{\frac{Q^2}{p}} +  \Big( \frac{\rho^{\tau}}{k-h}  \Big)^Q \right] \rho^{-\frac{np-Q}{p(n-1)}n\varepsilon} U(h, \rho) |A_{h,\rho}|^{\varepsilon}.
\end{split}
\end{equation}
Since
$$
\rho^{\frac{p\tau}{Q}}<\rho+1,
$$
then
$$
\Big(\frac{\rho^{\frac{p\tau}{Q}}}{\rho-r}  \Big)^{\frac{Q^2}{p}} \leq  c  \left\{\Big(\frac{\rho}{\rho-r} \Big)^{\frac{Q^2}{p}}+  \Big(\frac{1}{\rho-r} \Big)^{\frac{Q^2}{p}}\right\},
$$
thus (\ref{3.1111}) implies
\begin{equation}\label{3.1112}
 U(k, r)\leq c \left[ \Big(\frac{\rho}{\rho-r} \Big)^{\frac{Q^2}{p}} + \Big(\frac{1}{\rho-r} \Big)^{\frac{Q^2}{p}} + \Big( \frac{\rho^{\tau}}{k-h}  \Big)^Q \right] \rho^{-\frac{np-Q}{p(n-1)}n\varepsilon} U(h, \rho) |A_{h,\rho}|^{\varepsilon}.
\end{equation}
From (\ref{1.12222}) we know that
$$
|A_{k,r}| ^\gamma \le \frac { U(h,\rho) ^\gamma }{ (k-h) ^{Q\gamma}},
$$
which together with (\ref{3.1112}) yields
\begin{equation*}
\begin{split}
& U(k, r) |A_{k, r}|^{\gamma} \\
\leq & c \left[ \left( \frac{\rho}{\rho-r} \right)^{\frac{Q^2}{p}} + \Big(\frac{1}{\rho-r} \Big)^{\frac{Q^2}{p}} + \left( \frac{\rho^{\tau}}{k-h} \right)^Q \right] \rho^{-\frac{np-Q}{p(n-1)}n\varepsilon} U(h, \rho)^{1+\gamma}{\frac {1}{(k-h)^{Q\gamma}}} |A_{h,\rho}|^{\varepsilon}.
\end{split}
\end{equation*}
Let us now choose $\gamma$  in such a way that $ \gamma(1+\gamma)=\varepsilon $, and let us define
$$
\varphi(k, t)=U(k, t)|A_{k, t}|^{\gamma}.
$$
For $r<\rho\leq R$  and  $h<k$ we have
\begin{equation}\label{3.23}
\begin{split}
\varphi(k, r) \leq c \left[ \left( \frac{\rho}{\rho-r} \right)^{\frac{Q^2}{p}} + \Big(\frac{1}{\rho-r} \Big)^{\frac{Q^2}{p}} +\left( \frac{\rho^{\tau}}{k-h} \right)^Q \right] \rho^{-\frac{np-Q}{p(n-1)}n\varepsilon} {\frac {\varphi(h, \rho)^{1+\gamma}}{(k-h)^{Q\gamma}}}.
\end{split}
\end{equation}
Let now $ d \geq C R^{\tau}$ be a constant that we shall fix later, and define
$$
k_{i} =d\left(1-2^{-i}\right)
$$
and
$$
r_{i} =\frac{R}{2}\left(1+2^{-i}\right).
$$
In (\ref{3.23}) we choose
$$
r=r_{i+1},\ \rho=r_{i},\  k=k_{i+1},\ h=k_{i},
$$
then
$$
\rho-r=r_{i}-r_{i+1}=\frac{R}{2}\frac{1}{2^{i+1}},
$$
$$
k-h=k_{i+1}-k_{i}=d\frac{1}{2^{i+1}}.
$$
Let us define $\varphi _i =\varphi (k_i,r_i)$, then
\begin{equation*}
\begin{split}
\varphi_{i+1}  & \leq  c \left\{ \left[ \frac{\frac{R}{2}(1+2^{-i})}{{\frac{R}{2}}{\frac{1}{2^{i+1}}}}\right]^{\frac {Q^2}{p}} + \left[ \frac{1}{{\frac{R}{2}}{\frac{1}{2^{i+1}}}}\right]^{\frac {Q^2}{p}}+ \left[ \frac{(\frac{R}{2}(1+2^{-i}))^{\tau}}{d{\frac{1}{2^{i+1}}}} \right]^{Q} \right\} \times \\
& \qquad \qquad  \times \left[\frac{R}{2}(1+2^{-i})\right]^{-\frac{np-Q}{p(n-1)}n\varepsilon} \frac {\varphi_i^{1+\gamma}}{({\frac{d}{2^{i+1}}})^{Q\gamma}}\\
&\leq c d^{-Q \gamma} 2^{Q i(\frac{Q}{p}+\gamma)} R^{-\frac{np-Q}{p(n-1)} n\varepsilon} \varphi_{i}^{1+\gamma}.
\end{split}
\end{equation*}
We can now apply Lemma \ref{lemma 1.81} with
$$
C=c d^{-Q \gamma} R^{-\frac{np-Q}{p(n-1)} n\varepsilon}>0,\ B=2^{Q(\frac{Q}{p}+\gamma)}>1,\ \alpha=\gamma.
$$
Choosing
$$
d \geq c R^{-\frac{np-Q}{p(n-1)}n\varepsilon \frac{1}{Q\gamma}} \varphi_{0}^{\frac{1}{Q}}
$$
with the constant $c$  large enough, we can conclude that the sequence  $\varphi_{i}$  tends to zero, and hence
$$
\varphi\left(d, \frac{R}{2}\right)=0.
$$
The condition imposed on $d$  will be satisfied by taking
$$d=c R^{\tau}+c R^{-\frac{np-Q}{p(n-1)}n\varepsilon \frac{1}{Q\gamma}} \varphi_{0}^{\frac{1}{Q}},$$
and hence, recalling the choice of  $ \gamma$, we arrive at
$$
\sup _{B_{\frac{R}{2}}} u \leq d = c \left(\frac{1}{R^{\frac{(np-Q)n }{p(n-1)}}} \int_{A_{{0}, R}}u^{Q} d x\right)^{\frac{1}{Q}}\left(\frac{\left|A_{0, R}\right|}{R^{\frac{(np-Q)n }{p(n-1)}}}\right)^{\frac{\gamma}{Q}} +c R^{\tau}.
$$
The conclusion follows at once writing  $u-\kappa_{0}$  instead of  $u$.
\end{proof}

\vspace{3mm}

\begin{lemma}\label{lemma 4}
Let $u$ be a locally bounded function, satisfying (\ref{1.6}) (with  $p>1$) for every  $k \ge \kappa_0\ge 0$, and let
$$
2 k_{0}=M(2 R)+m(2 R)=: \sup _{Q_{2 R}} u+\inf _{Q_{2 R}} u.
$$
Assume that $ \left|A_{k_0,R}\right| \leq \lambda\left|Q_{R}\right|$  for some $ \lambda<1$. If for an integer $ \nu $, it holds that
\begin{equation}\label{3.24-1}
\operatorname{osc}(u, 2 R) \geq c 2^{\nu+1}  R^{\tau},
\end{equation}
where $\operatorname{osc}(u, 2 R)$ is the oscillation of the function $u$ over $Q_{2R}$, then, setting
\begin{equation}\label{3.24-2}
k_{\nu}=M(2 R)-2^{-\nu-1} \operatorname{osc}(u, 2 R) ,
\end{equation}
we have
\begin{equation}\label{3.25}
\left|A_{k_{\nu}, R}\right| \leq c \nu^{-\frac{n(p-1)}{p(n-1)}} R^{\frac{(np-Q)n}{p(n-1)}}.
\end{equation}
\end{lemma}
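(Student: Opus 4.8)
The plan is to carry out the classical ``second De Giorgi lemma'' (see, e.g., Chapter~7 of \cite{Giusti}) in the form adapted to the exponent $Q$ of (\ref{1.6}); the only genuinely new point is that the hypothesis (\ref{3.24-1}) on $\operatorname{osc}(u,2R)$ is calibrated exactly so as to let us absorb the inhomogeneous term $|A_{k,\rho}|^{1-\frac pn+\varepsilon}$ coming from (\ref{1.6}). As in the proof of Lemma~\ref{proposition 2} we may assume $\kappa_0=0$ and $R\le1$. Put $\omega:=\operatorname{osc}(u,2R)=M(2R)-m(2R)$, which by local boundedness satisfies $\omega\le c$ with $c$ independent of $R,\nu$, and set $k_j:=M(2R)-2^{-j-1}\omega$, $j=0,1,2,\dots$. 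Then $(k_j)$ is increasing, $k_0=\tfrac12\bigl(M(2R)+m(2R)\bigr)$ is the level of the statement, $k_\nu$ is the level (\ref{3.24-2}), and since $k_j\ge k_0$ we have $A_{k_j,R}\subseteq A_{k_0,R}$, hence $|Q_R\setminus A_{k_j,R}|\ge|Q_R\setminus A_{k_0,R}|\ge(1-\lambda)|Q_R|$ for all $j$.

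First I would apply the De Giorgi isoperimetric inequality: applying the $W^{1,1}$ Sobolev--Poincar\'e inequality to the truncation $v_j:=\min\{(u-k_j)_+,\,k_{j+1}-k_j\}$ on $Q_R$, using that $v_j$ vanishes on the set $Q_R\setminus A_{k_j,R}$ (of measure $\ge(1-\lambda)|Q_R|$) and that $Dv_j=Du\,\mathbf{1}_{D_j}$ with $D_j:=\{k_j<u<k_{j+1}\}\cap Q_R$, one obtains the scale-invariant estimate
\begin{equation*}
(k_{j+1}-k_j)\,|A_{k_{j+1},R}|^{\frac{n-1}{n}}\le\frac{c}{1-\lambda}\int_{D_j}|Du|\,dx .
\end{equation*}
Next, by H\"older's inequality and (\ref{1.6}) applied with $r=R,\ \rho=2R$ (and $k=k_j$), together with $0<u-k_j\le M(2R)-k_j=2^{-j-1}\omega$ on $A_{k_j,2R}$ and $|A_{k_j,2R}|\le|Q_{2R}|\le cR^n$,
\begin{equation*}
\int_{D_j}|Du|\,dx\le\Bigl(\int_{A_{k_j,R}}|Du|^p\,dx\Bigr)^{\frac1p}|D_j|^{1-\frac1p}\le c\Bigl[\bigl(\tfrac{2^{-j}\omega}{R}\bigr)^{Q}R^n+R^{n-p+n\varepsilon}\Bigr]^{\frac1p}|D_j|^{1-\frac1p}.
\end{equation*}

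Here is the crux of the argument. For $0\le j\le\nu-1$ the hypothesis (\ref{3.24-1}), with its constant chosen large enough, forces $R^{n-p+n\varepsilon}\le c\bigl(\tfrac{2^{-j}\omega}{R}\bigr)^{Q}R^n$: indeed (\ref{3.24-1}) yields $2^{-j}\omega\ge cR^{\tau}$, and from the defining relation $Q\tau=\frac{(np-Q)n\varepsilon}{p(n-1)}$ together with $Q\ge p$ one checks that $\tau\le\frac{Q-p+n\varepsilon}{Q}$, which (since $R\le1$) is precisely what converts $2^{-j}\omega\ge cR^{\tau}$ into the desired domination. Consequently $\int_{D_j}|Du|\,dx\le c\,2^{-jQ/p}\omega^{Q/p}R^{(n-Q)/p}|D_j|^{(p-1)/p}$, and, inserting this and $k_{j+1}-k_j=2^{-j-2}\omega$ into the isoperimetric estimate, using $|A_{k_{j+1},R}|\ge|A_{k_\nu,R}|$ (valid as $j+1\le\nu$) and $2^{\,j(1-Q/p)}\le1$,
\begin{equation*}
|A_{k_\nu,R}|^{\frac{n-1}{n}}\le c\,\omega^{\frac{Q-p}{p}}R^{\frac{n-Q}{p}}|D_j|^{\frac{p-1}{p}},\qquad 0\le j\le\nu-1 .
\end{equation*}

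Finally, raising this to the power $\frac{p}{p-1}$ and summing over $j=0,\dots,\nu-1$ — the $D_j$ are pairwise disjoint and $\sum_j|D_j|\le|A_{k_0,R}|\le\lambda|Q_R|\le cR^n$ — gives
\begin{equation*}
\nu\,|A_{k_\nu,R}|^{\frac{p(n-1)}{n(p-1)}}\le c\,\omega^{\frac{Q-p}{p-1}}R^{\frac{n-Q}{p-1}+n}=c\,\omega^{\frac{Q-p}{p-1}}R^{\frac{np-Q}{p-1}},
\end{equation*}
and raising to the power $\frac{n(p-1)}{p(n-1)}$ and absorbing the bounded factor $\omega^{\frac{n(Q-p)}{p(n-1)}}$ into $c$ yields (\ref{3.25}). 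I expect the main obstacle to be the exponent bookkeeping: one must use the sharp isoperimetric inequality (with the power $\frac{n-1}{n}$, not $1$, on $|A_{k_{j+1},R}|$) in order to obtain precisely the exponent $\frac{n(p-1)}{p(n-1)}$ of $\nu$, and one must verify that the value of $\tau$ fixed earlier is exactly borderline for absorbing $|A_{k,\rho}|^{1-\frac pn+\varepsilon}$ — which is where the restriction $Q<p^*$ (equivalently $Q\ge p$ in $\tau\le\frac{Q-p+n\varepsilon}{Q}$) is used.
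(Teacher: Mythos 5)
Your argument is correct and follows essentially the same route as the paper's proof: the De Giorgi isoperimetric (Sobolev) inequality on truncations between consecutive levels $k_j$, the Caccioppoli inequality (\ref{1.6}) on the pair $(R,2R)$ with the inhomogeneous term $R^{n-p+n\varepsilon}$ absorbed via (\ref{3.24-1}) and the definition of $\tau$ (your check $\tau\le\frac{Q-p+n\varepsilon}{Q}$ is the paper's $R^{Q\tau}\ge R^{n\varepsilon}$, $R^{n-Q}\ge R^{n-p}$, both resting on $Q\ge p$ rather than on $Q<p^*$ as your closing parenthesis suggests), and finally summation over the disjoint sets $D_j$. The only cosmetic difference is that you drop the factors $2^{j(1-Q/p)}\le 1$ directly and absorb $\omega^{\frac{Q-p}{p-1}}$ at the end, where the paper runs a slightly more laborious telescoping estimate before invoking the boundedness of the oscillation.
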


\begin{proof}
 For $ k_{0}<h<k $  let us define
\begin{equation*}
v(x)=\left\{\begin{array}{lll}
k-h & \text { if } & u \geq k, \\
u-h & \text { if } & h<u<k, \\
0 & \text { if } & u \leq h .
\end{array}\right.
\end{equation*}
We have $ v=0$  in $ B_{R}\setminus A_{k_0, R} $, and since the measure of this set is greater than $ (1-\lambda)\left|B_{R}\right| $, we can apply the Sobolev inequality, obtaining
$$ \left(\int_{Q_{R}} v^{\frac{n}{n-1}} d x\right)^{1-\frac{1}{n}} \leq c \int_{\Delta}|D v| d x=c \int_{\Delta}|D u| d x$$
in which
$$
\Delta=A_{h,R}\setminus A_{k,R}.
$$
We therefore have
\begin{equation}\label{3.26}
(k-h)|A_{k, R}|^{1-\frac{1}{n}}\le \left(\int_{Q_{R}} v^{\frac{n}{n-1}} d x\right)^{1-\frac{1}{n}} \leq c|\Delta|^{1-\frac{1}{p}}\left(\int_{A_{h, R}}|D u|^{p} d x\right)^{\frac{1}{p}}.
\end{equation}
On the other hand, from (\ref{1.6}) we deduce
\begin{equation*}\label{3.27}
\begin{aligned}
 \displaystyle \int_{A_{h, R}}|D u|^{p} d x \leq &\displaystyle  \frac{c}{R^{Q}} \int_{A_{h,2R}}(u-h)^{Q} d x+c |{A_{h, 2R}}|^{1-\frac{p}{n}+\varepsilon} \\
\leq &\displaystyle  c R^{n-Q}(M(2 R)-h)^{Q}+c  R^{n-p+n \varepsilon}.
\end{aligned}
\end{equation*}
For $ h \leq k_{\nu}$, (\ref{3.24-1}) and (\ref{3.24-2}) merge into
$$
M(2 R)-h \geq M(2 R)-k_{\nu}  \geq 2^{-\nu-1} \operatorname{osc}(u, 2 R) \geq  c R^{\tau},
$$
therefore
$$
(M(2 R)-h)^Q \geq c R^{Q \tau}=cR^{n\varepsilon \frac{(np-Q)}{p(n-1)}} \geq cR^{n\varepsilon},
$$
$$
R^{n-Q}\geq R^{n-p},
$$
$$
R^{n-Q}(M(2 R)-h)^{Q}\geq c  R^{n-p+n \varepsilon},
$$
and hence
\begin{equation}\label{3.26-2}
\displaystyle \int_{A_{h, R}}|D u|^{p} d x \leq 2c R^{n-Q}(M(2 R)-h)^{Q}.
\end{equation}
Substituting (\ref{3.26-2}) into (\ref{3.26}), one has
$$
(k-h)|A_{k,R}|^{1-\frac{1}{n}} \leq c|\Delta|^{1-\frac{1}{p}} R^{\frac{n-Q}{p}}(M(2 R)-h)^{\frac{Q}{p}} .
$$
Writing the above inequality for the levels
$$
k=k_{i}=M(2 R)- 2^{-i-1} \operatorname{osc}(u, 2 R)
$$
and
$$
h=k_{i-1}=M(2 R)- 2^{-i} \operatorname{osc}(u, 2 R),
$$
we get
\begin{equation*}
\begin{array}{llll}
&\displaystyle \frac 1 2 \frac{1}{2^{i}}\operatorname{osc}(u, 2 R)|A_{k_{i-1}, R}|^{1-\frac{1}{n}}\\
\leq &\displaystyle \frac{1}{2^{i+1}}\operatorname{osc}(u, 2 R)|A_{k_i, R}|^{1-\frac{1}{n}} \\
\leq &\displaystyle c |\Delta_i|^{1-\frac{1}{p}} R^{\frac{n-Q}{p}} \left(\frac{1}{2^{i}}\operatorname{osc}(u, 2 R)\right)^{\frac {Q}{p}},
\end{array}
\end{equation*}
the inequality above implies
\begin{equation*}
|A_{k_\nu, R}|^{1-\frac{1}{n}}\leq |A_{k_i, R}|^{1-\frac{1}{n}}\leq c |\Delta_i|^{1-\frac{1}{p}} R^{\frac{n-Q}{p}} \left(\frac{1}{2^{i}}\operatorname{osc}(u, 2 R)\right)^{\frac {Q}{p}-1},
\end{equation*}
where $ \Delta_{i}=A _{k_{i-1}, R} \setminus A_{k_{i}, R} $. Raising both sides of the above inequality to the power $ \frac{p}{p-1} $ one gets
$$
\left|A_ {k_{\nu}, R}  \right|^{\frac{p(n-1)}{n(p-1)}}  \leq c R^{\frac{n-Q}{p-1}}\left|\Delta_{i}\right|\left(\frac{1}{2^{i}}\operatorname{osc}(u, 2 R)\right)^{\frac {Q-p}{p-1}} .
$$
We now sum over  $i$  from 1 to $\nu$,
\begin{equation*}
\begin{split}
& \sum _{i=1} ^\nu  \left|\Delta_{i}\right|\left(\frac{1}{2^{i}}\operatorname{osc}(u, 2 R)\right)^{\frac {Q-p}{p-1}}\\
\leq & |A_{k_{0},R}|\left(\frac{1}{2}\operatorname{osc}(u, 2 R)\right)^{\frac {Q-p}{p-1}} - |A_{k_{1},R}|\left(\frac{1}{2}\operatorname{osc}(u, 2 R)\right)^{\frac {Q-p}{p-1}} \\
& +|A_{k_{1},R}|\left(\frac{1}{2^2}\operatorname{osc}(u, 2 R)\right)^{\frac {Q-p}{p-1}} -|A_{k_{2},R}|\left(\frac{1}{2^2}\operatorname{osc}(u, 2 R)\right)^{\frac {Q-p}{p-1}}
\end{split}
\end{equation*}
\begin{equation*}
\begin{split}
& + \cdots \\
& + |A_{k_{\nu-1},R}|\left(\frac{1}{2^\nu}\operatorname{osc}(u, 2 R)\right)^{\frac {Q-p}{p-1}} - |A_{k_{\nu},R}|\left(\frac{1}{2^\nu}\operatorname{osc}(u, 2 R)\right)^{\frac {Q-p}{p-1}}\\
\leq & |A_{k_{0},R}|\left(\frac{1}{2}\operatorname{osc}(u, 2 R)\right)^{\frac {Q-p}{p-1}} - |A_{k_{1},R}|\left(\frac{1}{2^2}\operatorname{osc}(u, 2 R)\right)^{\frac {Q-p}{p-1}} \\
& +|A_{k_{1},R}|\left(\frac{1}{2^2}\operatorname{osc}(u, 2 R)\right)^{\frac {Q-p}{p-1}} -|A_{k_{2},R}|\left(\frac{1}{2^3}\operatorname{osc}(u, 2 R)\right)^{\frac {Q-p}{p-1}}\\
& + \cdots \\
& + |A_{k_{\nu-1},R}|\left(\frac{1}{2^\nu}\operatorname{osc}(u, 2 R)\right)^{\frac {Q-p}{p-1}} - |A_{k_{\nu},R}|\left(\frac{1}{2^\nu}\operatorname{osc}(u, 2 R)\right)^{\frac {Q-p}{p-1}}\\
\leq & |A_{k_{0},R}|\left(\frac{1}{2}\operatorname{osc}(u, 2 R)\right)^{\frac {Q-p}{p-1}}- |A_{k_{\nu},R}|\left(\frac{1}{2^\nu}\operatorname{osc}(u, 2 R)\right)^{\frac {Q-p}{p-1}}\\
\leq & \left|A_{k_{0},R}\right| \left(\frac{1}{2}\operatorname{osc}(u, 2 R)\right)^{\frac {Q-p}{p-1}}\\
\leq &  c \left|A_{k_{0},R}\right|,
\end{split}
\end{equation*}
where we have used the fact
$$
 \operatorname{osc}(u, 2 R) \le \mbox {esssup}_{B_{2R}} u <\infty.
$$
Thus
\begin{equation*}
\nu\left|A_{k_{\nu}, R}\right|^{\frac{p(n-1)}{n(p-1)}} \leq c R^{\frac{n-Q}{p-1}}\left|A_{k_{0}, R}\right| \leq c R^{\frac{pn-Q}{(p-1)}},
\end{equation*}
and so
\begin{equation*}
\left|A_{ k_{\nu}, R}\right| \leq c \nu^{-\frac{n(p-1)}{p(n-1)}} R^{\frac{(np-Q)n}{p(n-1)}},
\end{equation*}
as desired.
\end{proof}

\vspace{3mm}
In the proof of Theorem \ref{theorem 21} we shall need the following algebraic lemma, which comes from Lemma 7.3 in \cite{Giusti}.
\begin{lemma}\label{the new lemma}
Let $ \varphi(t) $ be a positive function, and assume that there exists a constant  $q$  and a number  $ \tilde{\tau}$, $0<\tilde{\tau}<1 $ such that for every $ R<R_{0}$
$$\varphi(\tilde{\tau} R) \leq \tilde{\tau}^{\delta} \varphi(R)+B R^{\beta}$$
with $ 0<\beta<\delta ,$ and
$$
\varphi(t) \leq q \varphi\left(\tilde{\tau}^{k} R\right)
$$
for every $ t $ in the interval $ \left(\tilde{\tau}^{k+1} R, \tilde{\tau}^{k} R\right) $.
Then, for every $\varrho<R \leq R_{0} $ we have
$$
\varphi(\varrho) \leq C\left\{\left(\frac{\varrho}{R}\right)^{\beta} \varphi(R)+B \varrho^{\beta}\right\},
$$
where $ C$  is a constant depending only on  $q, \tilde{\tau}, \delta$ and $ \beta$.
\end{lemma}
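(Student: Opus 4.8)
The plan is to prove Lemma \ref{the new lemma} by the standard dyadic iteration argument for decay estimates of this type (as in Lemma 7.3 of \cite{Giusti}). First I would iterate the basic recursion $\varphi(\tilde\tau R)\le \tilde\tau^{\delta}\varphi(R)+BR^{\beta}$ along the geometric sequence of radii $\tilde\tau^{j}R$: replacing $R$ by $\tilde\tau^{j}R$ gives $\varphi(\tilde\tau^{j+1}R)\le \tilde\tau^{\delta}\varphi(\tilde\tau^{j}R)+B\tilde\tau^{j\beta}R^{\beta}$, and a finite induction on $k$ yields
$$
\varphi(\tilde\tau^{k}R)\le \tilde\tau^{k\delta}\varphi(R)+B R^{\beta}\sum_{i=0}^{k-1}\tilde\tau^{i\delta}\tilde\tau^{(k-1-i)\beta}.
$$
Second, I would bound the geometric sum: writing $\tilde\tau^{i\delta+(k-1-i)\beta}=\tilde\tau^{k\beta-\beta}\,\tilde\tau^{i(\delta-\beta)}$ and using $\delta>\beta$ together with $0<\tilde\tau<1$, so that $\sum_{i\ge 0}\tilde\tau^{i(\delta-\beta)}=(1-\tilde\tau^{\delta-\beta})^{-1}<\infty$, the sum is at most $c(\tilde\tau,\delta,\beta)\,\tilde\tau^{k\beta}$. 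Since also $\tilde\tau^{k\delta}\le\tilde\tau^{k\beta}$, this gives the clean discrete estimate
$$
\varphi(\tilde\tau^{k}R)\le c(\tilde\tau,\delta,\beta)\,\tilde\tau^{k\beta}\bigl(\varphi(R)+BR^{\beta}\bigr),
$$
valid for every integer $k\ge 0$ and every $R\le R_{0}$.

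Third, for a general pair $\varrho<R\le R_{0}$ I would choose the unique integer $k\ge 0$ with $\tilde\tau^{k+1}R<\varrho\le \tilde\tau^{k}R$. The second hypothesis of the lemma, applied with this $k$, gives $\varphi(\varrho)\le q\,\varphi(\tilde\tau^{k}R)$, while $\tilde\tau^{k+1}R<\varrho$ yields $\tilde\tau^{k\beta}\le \tilde\tau^{-\beta}(\varrho/R)^{\beta}$ and hence $\tilde\tau^{k\beta}R^{\beta}\le \tilde\tau^{-\beta}\varrho^{\beta}$. Substituting these two inequalities into the discrete estimate from the second step gives
$$
\varphi(\varrho)\le q\,\tilde\tau^{-\beta}c(\tilde\tau,\delta,\beta)\Bigl\{\bigl(\tfrac{\varrho}{R}\bigr)^{\beta}\varphi(R)+B\varrho^{\beta}\Bigr\},
$$
which is the assertion of the lemma with $C=q\,\tilde\tau^{-\beta}c(\tilde\tau,\delta,\beta)$, a constant depending only on $q,\tilde\tau,\delta,\beta$.

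There is no deep obstacle in this lemma; it is purely algebraic. The only points requiring care are the two places where $\beta<\delta$ is used — to keep the geometric series $\sum\tilde\tau^{i(\delta-\beta)}$ convergent and to absorb the term $\tilde\tau^{k\delta}\varphi(R)$ into $\tilde\tau^{k\beta}\varphi(R)$ — and the bookkeeping needed to verify that the final constant $C$ is independent of $R,\varrho,B$ and of the particular function $\varphi$. The auxiliary hypothesis $\varphi(t)\le q\,\varphi(\tilde\tau^{k}R)$ on the dyadic annuli $(\tilde\tau^{k+1}R,\tilde\tau^{k}R)$ is exactly what allows passing from the discrete sequence of radii $\tilde\tau^{k}R$ to an arbitrary radius $\varrho$, so I would make sure to invoke it with precisely the $k$ selected above.
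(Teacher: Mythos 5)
Your argument is correct and is essentially the standard iteration proof of this lemma, which the paper does not reproduce but simply cites as Lemma 7.3 in Giusti's book; your three steps (iterating along the radii $\tilde{\tau}^{j}R$, summing the geometric series using $\beta<\delta$, and passing to an arbitrary $\varrho$ via the annulus hypothesis with the correctly chosen $k$) coincide with that reference argument, and the bookkeeping of the constant $C=q\,\tilde{\tau}^{-\beta}c(\tilde{\tau},\delta,\beta)$ is right. No gaps beyond the harmless boundary case $\varrho=\tilde{\tau}^{k}R$, which is trivial.
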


With the above Lemmas in hands, we can now prove Theorem \ref{theorem 21}.

\vspace{3mm}

\begin{proof}  Let, as above, $2 k_{0}=M(2 R)+m(2 R) $. We can assume without loss of generality that
$$
\left|A_{k_{0}, R} \right| \leq \frac{1}{2}\left|Q_{R}\right|,$$
since otherwise we would have
$$
\left|B_{k_{0}, R}\right|=\left|Q_{R}\right|-\left|A_{ k_{0}, R} \right| \leq \frac{1}{2}\left|Q_{R}\right|,
$$
and it will be sufficient to write  $-u $ instead of $ u $.

Setting $ k_{\nu}=M(2 R)-2^{-\nu-1} \operatorname{osc}(u, 2 R) $, we have $ k_{\nu}> \kappa _{0} $. Write (\ref{3.17})  with $ k_{\nu} $ instead of $ \kappa _{0} $
\begin{equation*}\label{3.28}
\begin{split}
\sup _{B_{\frac{R}{2}}} u-k_{\nu}& \leq c \left(\frac{1}{R^{\frac{(np-Q)n}{p(n-1)}}} \int_{A_{k_{\nu}, R}}\left(u-k_{\nu}\right)^{Q} d x\right)^{\frac{1}{Q}}\left(\frac{\left|A_{k_{\nu}, R}\right|}{R^{\frac{(np-Q)n}{p(n-1)}}}\right)^{\frac{\gamma}{Q}} +c R^{\tau}\\
& \leq c \sup _{B_{R}}\left(u-k_{\nu}\right)\left(\frac{\left|A_{k_{\nu}, R}\right|}{R^{ \frac{(np-Q)n}{p(n-1)}}}\right)^{\frac{\gamma+1}{Q}}+c  R^{\tau}.
\end{split}
\end{equation*}
Let us now choose the integer  $\nu$  in such a way that
$$
c \nu^{-\frac{n(p-1)}{p(n-1)}} \leq \frac{1}{2}.
$$
If $ \operatorname{osc}(u, 2 R) \geq c 2^{\nu+1}  R^{\tau} $, we deduce from (\ref{3.25}) that
$$
\sup _{B_{\frac{R}{2}}} u-k_{\nu} \leq \frac{1}{2} \sup _{B_{R}}\left(u-k_{\nu}\right) +c  R^{\tau},
$$
therefore
$$
 M\left(\frac{R}{2}\right)-k_{\nu} \leq \frac{1}{2}\left(M(2 R)-k_{\nu}\right)+c R^{\tau},
 $$
so that, subtracting from both members the quantity  $ m\left(\frac{R}{2}\right)$,
$$
M\left(\frac{R}{2}\right)-m\left(\frac{R}{2}\right)-k_{\nu} \leq \frac{1}{2}\left(M(2 R)-k_{\nu}\right)-m\left(\frac{R}{2}\right)+c R^{\tau},
$$
then
$$
M\left(\frac{R}{2}\right)-m\left(\frac{R}{2}\right) \leq \frac{1}{2}M(2 R)-m\left(\frac{R}{2}\right)+\frac{1}{2}k_{\nu}+c R^{\tau},
$$
by (\ref{3.24-2}) we have
\begin{equation*}
\begin{split}
\operatorname{osc}\left(u, \frac{R}{2}\right) & \leq \frac{1}{2}M(2 R)-m\left(\frac{R}{2}\right)+\frac{1}{2}\left[M(2R) -\frac{1}{2^{\nu+1}}\operatorname{osc}\left(u, 2R\right) \right]+c R^{\tau} \\
& \leq M(2 R)-m(2 R)-\frac{1}{2^{\nu+2}}\operatorname{osc}\left(u, 2R \right)+c R^{\tau}\\
& \leq\left(1-\frac{1}{2^{\nu+2}}\right) \operatorname{osc}(u, 2 R)+cR^{\tau}.
\end{split}
\end{equation*}
In conclusion, either the function $ \operatorname{osc}(u, R) $ satisfies the above relation, or else
$$\operatorname{osc}(u, 2 R) \leq  c 2^{\nu+1} R^{\tau}.
$$
In any case, we have
\begin{equation*}\label{3.29}
\operatorname{osc}\left(u, \frac{R}{2}\right) \leq\left(1-\frac{1}{2^{\nu+2}}\right) \operatorname{osc}(u, 2 R)+c 2^{\nu} R^{\tau}.
\end{equation*}
We can now apply the preceding Lemma \ref{the new lemma} with $ \tilde{\tau}=1 / 4 $ and $ \delta=\log _{\tilde{\tau}}(1-(2^{-\nu-2})) $. Decreasing if necessary the value of $ \beta $, we can assume that $ \beta<\delta $.
We therefore have
$$
\operatorname{osc}(u, \varrho) \leq c\left\{\left(\frac{\varrho}{R}\right)^{\beta} \operatorname{osc}(u, R)+ c \varrho^{\beta}\right\},
$$
for every $ \varrho<R \leq \min \left(R_{0}, \operatorname{dist}\left(x_{0}, \partial \Omega\right)\right)$. The above inequality shows that $u$ is locally H\"older continuous.
This ends the proof of Theorem \ref{theorem 21}.
\end{proof}

\vspace{2mm}

We note in Definition \ref{definition 2} that, a function $u\in W_{loc} ^{1,p} (\Omega)$ belongs to the generalized De Giorgi class $GDG^+_p (\Omega, p ,Q,y, y_*,\varepsilon, \kappa _0)$ if it satisfies (\ref{eDG+}). For a vector valued function $u=(u^1,\cdots, u^N)\in W_{loc} ^{1,p} (\Omega, \mathbb R^N)$, $N\ge 1$, one can give a similar definition as follows.

\begin{definition}\label{definition 2.2}
We say that $u\in W_{loc} ^{1,p} (\Omega, \mathbb R^N)$, $N\ge 1$, belongs to the generalized De Giorgi class $GDG_{p}^+(N, \Omega, p, Q, y, y_*,\varepsilon ,\kappa_0)$, $1< p\le n $, $p\le Q<p^*$, $y$ and $\varepsilon>0$, $y_*$ and $\kappa _0\ge 0$,  if
\begin{equation}\label{eDG+2}
\sum_{\alpha=1}^{N}\int_{A_{k,\sigma \rho }^{\alpha}}\left|D u^{\alpha}\right|^{2}  {\rm d}x  \leq y \sum_{\alpha=1}^{N} \int_{A_{k,\rho }^{\alpha}}\left(\frac{u^{\alpha}-k}{(1-\sigma )\rho }\right)^Q  {\rm d}x  + y_* \sum_{\alpha=1}^{N}|A_{k,\rho }^{\alpha}|^{1-\frac{p}{n}+\varepsilon},
\end{equation}
for all $k\ge \kappa_0 $, $\sigma \in (0,1)$, and all pairs of concentric cubes $Q_{\sigma \rho} (x_0) \subset Q_\rho (x_0) \subset \Omega$ centered at $x_0$, where
$$
A_{k,\rho }^{\alpha} =\{x\in \Omega: u^\alpha >k\}\cap Q_\rho.
$$
\end{definition}

We can define similarly $GDG_{p}^- (N, \Omega, p, Q,y, y_*,\varepsilon ,\kappa_0)$ to be the class of functions $u$ such that $-u \in GDG_p^+ (N, \Omega, p, Q,y, y_*,\varepsilon ,\kappa_0)$. More explicitly, they are the vectors in $W_{loc} ^{1,p} (\Omega, \mathbb R^N)$ such that for all $k\le -\kappa _0$, all $\sigma \in (0,1)$, and all pairs of concentric cubes $Q_{\sigma \rho}\subset Q_\rho \subset \Omega$,
\begin{equation}\label{eDG+22}
\sum_{\alpha=1}^{N}\int_{B_{k,\sigma \rho }^{\alpha}}\left|D u^{\alpha}\right|^{2}  {\rm d}x  \leq y \sum_{\alpha=1}^{N} \int_{B _{k,\rho }^{\alpha}}\left(\frac{k-u^{\alpha}}{(1-\sigma )\rho }\right)^Q  {\rm d}x  + y_* \sum_{\alpha=1}^{N}|B_{k,\rho }^{\alpha}|^{1-\frac{p}{n}+\varepsilon},
\end{equation}
where
$$
B_{k,\rho }^{\alpha} =\{x\in \Omega: u^\alpha<k\}\cap Q_\rho.
$$
We shall indicate by $GDG_{p}(N,\Omega, p, Q,y, y_*,\varepsilon ,\kappa_0)$ the class of functions belonging both to $GDG_{p}^+$ $(N,\Omega, p, Q,y, y_*,\varepsilon ,\kappa_0)$ and $GDG_{p}^-(N,\Omega, p, Q,y, y_*,\varepsilon ,\kappa_0)$:
\begin{equation}\label{definition for GDG}
\begin{array}{llll}
&\displaystyle GDG_{p} (N,\Omega, p, Q,y, y_*,\varepsilon ,\kappa_0)\\
= &\displaystyle GDG_{p}^+(N,\Omega, p, Q,y, y_*,\varepsilon ,\kappa_0) \cap GDG_{p}^-(N,\Omega, p, Q,y, y_*,\varepsilon ,\kappa_0).
\end{array}
\end{equation}
It is clear that
$$
GDG_{p} (1,\Omega, p, Q,y, y_*,\varepsilon ,\kappa_0) =GDG_{p} (\Omega, p, Q,y, y_*,\varepsilon ,\kappa_0)
$$
and
$$
GDG_{p}(1, \Omega, p, p,y, y_*,\varepsilon ,\kappa_0) =DG_p(\Omega, p,y, y_*,\varepsilon ,\kappa_0).
$$

One can prove, similar to the proof of Theorem \ref{theorem 21}, that
\begin{theorem}\label{theorem 212}
Let $u \in GDG_{p} (N,\Omega, p, Q,y, y_*,\varepsilon,\kappa_0)$ for $1<p\le n$, $N\ge 1$ and some $Q \in [p, p^*)$, then $u$ is locally bounded and locally H\"older continuous in $\Omega$.
\end{theorem}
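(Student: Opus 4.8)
The plan is to reduce the vector-valued statement to the scalar results already established, essentially component by component, but with the bookkeeping adjusted so that the sums over $\alpha$ survive the De Giorgi iteration. First I would observe that the hypothesis \eqref{eDG+2} is a \emph{coupled} inequality: the super-level sets $A^\alpha_{k,\rho}$ and the gradient norms $|Du^\alpha|$ of all components appear together. The natural device is to introduce the scalar quantity
$$
\Phi(k,\rho) = \sum_{\alpha=1}^N \int_{A^\alpha_{k,\rho}} (u^\alpha - k)^Q \,{\rm d}x,
\qquad
\mathcal A(k,\rho) = \sum_{\alpha=1}^N |A^\alpha_{k,\rho}|,
$$
which play the roles that $U(k,\rho)$ and $|A_{k,\rho}|$ played in Lemmas \ref{locally bounded}--\ref{lemma 4}. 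Since for each $\alpha$ one has $\big(\sum_\alpha a_\alpha\big)^\theta \le (\text{or} \ge) \ c(N)\sum_\alpha a_\alpha^\theta$ depending on whether $\theta \ge 1$ or $\theta \le 1$, the Sobolev embedding step \eqref{3.18}, the two Hölder-inequality steps \eqref{3.19}--\eqref{3.20}, and the summation-by-levels argument of Lemma \ref{lemma 4} all go through verbatim after summing over $\alpha$, at the cost of constants depending additionally on $N$. Concretely, one first establishes the analogue of Lemma \ref{locally bounded}: each component $u^\alpha$ satisfies, after a Young-inequality estimate identical to the scalar one, the inequality (2.6) of \cite{Cupini-Leonetti-Mascolo} (with the coupling only strengthening the right-hand side), hence $u$ is locally bounded.

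Next I would prove the vector analogue of Lemma \ref{proposition 2}: with $\eta$ and $\zeta^\alpha = \eta(u^\alpha-k)_+$ as before, apply Sobolev to each $\zeta^\alpha$, sum over $\alpha$, use Hölder on each term, and invoke \eqref{eDG+2} \emph{once} (it already carries the sum) to obtain
$$
\Phi(k,r) \le c\Big(\tfrac{1}{\rho-r}\Big)^{Q}\Phi(k,\rho)\,\mathcal A(k,\rho)^{Q/n}
+ c\Big(\tfrac{1}{\rho-r}\Big)^{Q^2/p}\Phi(k,\rho)^{Q/p}\mathcal A(k,\rho)^{1+\frac Qn-\frac Qp}
+ c\,\mathcal A(k,\rho)^{1+\frac Qp\varepsilon}.
$$
The elementary inequalities $\mathcal A(h,\rho)^\gamma \le \big(\sum_\alpha |A^\alpha_{k,\rho}|\big)^\gamma$ and, for the passage from $h$ to $k$, $(k-h)^Q \mathcal A(k,\rho) \le \Phi(h,\rho)$ (which follows componentwise and then by summation) let one set $\varphi(k,t) = \Phi(k,t)\mathcal A(k,t)^\gamma$ with $\gamma(1+\gamma)=\varepsilon$ and run Lemma \ref{lemma 1.81} exactly as in the scalar proof, yielding $\sup_{Q_{R/2}}(u^\alpha - \kappa_0) \le \text{(the same bound with $\mathcal A$, $\Phi$ in place of $|A|$, $U$)} + cR^\tau$ for every $\alpha$. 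Then the vector version of Lemma \ref{lemma 4} is obtained by the same truncation $v^\alpha$, Sobolev, Hölder, summation-over-levels argument, giving $\sum_\alpha |A^\alpha_{k_\nu,R}| \le c\nu^{-\frac{n(p-1)}{p(n-1)}} R^{\frac{(np-Q)n}{p(n-1)}}$. Finally the oscillation-decay step and Lemma \ref{the new lemma} apply to each component separately once we fix $2k_0 = \sup_{Q_{2R}} u^\alpha + \inf_{Q_{2R}} u^\alpha$, so each $u^\alpha$ — and therefore $u$ — is locally Hölder continuous.

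The main obstacle, and the only place where genuine care beyond re-indexing is needed, is the choice of the level $k_0$ and the ``measure $\le \frac12|Q_R|$'' dichotomy in the vector setting: the cut $2k_0 = M^\alpha(2R)+m^\alpha(2R)$ is component-dependent, and the assumption $|A^\alpha_{k_0,R}| \le \lambda|Q_R|$ with $\lambda<1$ — needed so that $v^\alpha$ vanishes on a set of measure $\ge(1-\lambda)|Q_R|$ and Sobolev's inequality on $Q_R$ can be used — must be arranged separately for each $\alpha$ (replacing $u^\alpha$ by $-u^\alpha$ if necessary, which is consistent because \eqref{eDG+2} and \eqref{eDG+22} are symmetric in this replacement applied to a single component). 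Since the oscillation-decay conclusion is per-component and a finite maximum of finitely many Hölder exponents is again a Hölder exponent, this causes no real difficulty; I would simply carry the index $\alpha$ through Lemma \ref{lemma 4} fixing one component at a time while using the \emph{summed} Caccioppoli inequality \eqref{eDG+2} (which dominates the single-component one) on the right-hand side. Everything else is a routine transcription of the proof of Theorem \ref{theorem 21} with the substitutions $U \rightsquigarrow \Phi$, $|A_{k,\rho}| \rightsquigarrow \mathcal A(k,\rho)$, and all constants allowed to depend on $N$.
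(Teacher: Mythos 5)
Your plan works for the local boundedness half, and there it coincides in substance with what the paper intends (its proof of Theorem \ref{theorem 212} is just ``repeat the proof of Theorem \ref{theorem 21}''): since the truncation levels $k_i=d(1-2^{-i})$ in the sup-estimate are the \emph{same} for every component, running the iteration on the summed quantities $\Phi(k,\rho)$ and $\mathcal A(k,\rho)$ is indeed a routine transcription (note, though, that a single component does \emph{not} satisfy the scalar inequality (2.6) of Cupini--Leonetti--Mascolo by itself, because the right-hand side of \eqref{eDG+2} carries all components; one must genuinely work with the sums, as you in fact do later). The genuine gap is in the H\"older part, exactly at the point you identify and then dismiss. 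In Lemma \ref{lemma 4} and in the final oscillation step the levels $k_i=M^{\alpha_0}(2R)-2^{-i-1}\operatorname{osc}(u^{\alpha_0},2R)$ of \eqref{3.24-2} are adapted to one fixed component $\alpha_0$. You propose to fix $\alpha_0$ and use the summed inequality \eqref{eDG+2}, ``which dominates the single-component one''. But the domination is only on the left: \eqref{eDG+2} bounds $\int_{A^{\alpha_0}_{h,R}}|Du^{\alpha_0}|^p\,{\rm d}x$ by a right-hand side containing $\sum_{\beta}\int_{A^{\beta}_{h,2R}}(u^{\beta}-h)^Q\,{\rm d}x+\sum_{\beta}|A^{\beta}_{h,2R}|^{1-\frac pn+\varepsilon}$, and for $\beta\ne\alpha_0$ these terms are only of size $c\,\|u\|_{\infty}^{Q}R^{n-Q}$; they do not decay like $\big(2^{-i}\operatorname{osc}(u^{\alpha_0},2R)\big)^{Q}R^{n-Q}$ as $h=k_{i-1}\uparrow M^{\alpha_0}(2R)$. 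That decay is exactly what allows one to divide by $k-h$ and sum over $i$ to get \eqref{3.25}, so the measure estimate for $A^{\alpha_0}_{k_{\nu},R}$ does not follow. The same obstruction reappears when you use the sup-estimate \eqref{3.17} with $\kappa_0$ replaced by $k_{\nu}$: in the vector version its right-hand side carries $\Phi(k_{\nu},R)$ and $\mathcal A(k_{\nu},R)$, which include the other components at the level $k_{\nu}$ adapted to $\alpha_0$, and no smallness is available for those (the level $k_{\nu}$ may even lie below $\inf_{Q_R}u^{\beta}$, so that $A^{\beta}_{k_{\nu},R}=Q_R$).

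That this is not mere bookkeeping can be seen on a test pair: take $N=2$, $Q=p<n$, $u^1\equiv C$ a large constant, and $u^2$ bounded, in $W^{1,p}$, with $\sup_{Q_\rho\subset\Omega}\rho^{\,p-n}\int_{Q_\rho}|Du^2|^p\,{\rm d}x<\infty$ but $u^2$ not H\"older continuous (e.g.\ $u^2=\kappa_0+|\log|x||^{-\delta}$ near the origin). For $C$ large the pair satisfies \eqref{eDG+2} and \eqref{eDG+22} for all admissible $k$, $\sigma$ and cubes, because the first component's term swamps the right-hand side; yet no per-component oscillation decay for $u^2$ can be extracted from the summed inequality used one component at a time. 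So the passage from the coupled Caccioppoli inequality to the componentwise H\"older estimate requires an additional ingredient (some decoupling of the components at the relevant levels, in the spirit of assumption $(\mathcal A_3)$ of Section 6), and is not obtained by the substitutions $U\rightsquigarrow\Phi$, $|A_{k,\rho}|\rightsquigarrow\mathcal A$ alone; this step is left unproved in your proposal (as it is in the paper's one-line argument).
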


For the proof of Theorem \ref{theorem 212}, One can repeat the proof of Theorem \ref{theorem 21} without any difficulty, we omit the details.

In the next four sections we shall give applications of Theorems \ref{theorem 21} and \ref{theorem 212}. In Section 3 we shall consider a polyconvex integral functional in four-dimensional Euclidean spaces with the integrand has splitting form. Under some structural conditions on the energy density, we prove that all local minimizers are locally bounded and locally H\"older continuous. In Section 4 we shall consider a special type of linear elliptic equation with degenerate coercivity, under suitable integrability assumption on the coefficient, we derive that any of its weak solutions is locally bounded and locally H\"older continuous. In Section 5, we shall show that our Theorem \ref{theorem 21} can be applied in dealing with regularity issues of elliptic equations with non-standard grow conditions. In the last section, Section 6, we treat with quasilinear elliptic systems. Under suitable assumptions on the coefficients, we show that any of its weak solutions is locally bounded and locally H\"older continuous.


\section{A polyconvex integral functional.}

In this section we give an application of Theorem \ref{theorem 21} to regularity property for minimizers of some polyconvex integrals in four dimensional Euclidean spaces with the
integrand has splitting structure. More precisely, let $\Omega$ be an open bounded subset of $\mathbb R^4$ and let us consider the variational integral
\begin{equation}\label{integral functionals}
\mathcal{F}(u,\Omega)=\int_{\Omega}f(x, Du (x))dx,
\end{equation}
where $f:\Omega\times\mathbb R^{4\times 4}\rightarrow \mathbb R$ is a Carath\'eodory function (that is, measurable with respect to $x$ for every $\xi\in \mathbb R^{4\times 4}$ and continuous with respect to $\xi$ for almost every $x\in \Omega$),
$$
u =(u^1,u^2,u^3,u^4)^t:\Omega\subset\mathbb R^4 \rightarrow\mathbb R^4
$$
is a vector-valued map, and $Du$ is the $4\times 4$ Jacobian matrix of its partial derivatives, i.e.,
$$
Du=\left(
\begin{array}{c}
Du^1\\
Du^2\\
Du^3\\
Du^4\\
\end{array}
\right)=\left(
\begin{array}{cccc}
D_{1}u^{1} &D_{2}u^{1}   &D_{3}u^{1}   &D_{4}u^{1}\\
D_{1}u^{2} &D_{2}u^{2}   &D_{3}u^{2}   &D_{4}u^{2}\\
D_{1}u^{3} &D_{2}u^{3}   &D_{3}u^{3}   &D_{4}u^{3}\\
D_{1}u^{4} &D_{2}u^{4}   &D_{3}u^{4}   &D_{4}u^{4}
\end{array}
\right), \; D_{\beta}u^{\alpha}=\frac{\partial u^{\alpha}}{\partial x_{\beta}}, \; \alpha,\beta\in\{1, 2, 3,4\}.
$$
We assume that $f:\Omega \times \mathbb R^{4\times 4}\rightarrow \mathbb R$ has splitting form:
\begin{equation}\label{2.2}
f(x,\xi)= \sum _{\alpha =1} ^4 F_\alpha \big( x,\xi ^\alpha \big)+ \sum_{\beta=1}^6 G_\beta \big( x, (\mbox {adj}_{2}\xi)^\beta \big)+ \sum_{\gamma=1}^4 H_\gamma \big( x,(\mbox {adj}_{3}\xi)^\gamma \big) +I(x, \det \xi),
\end{equation}
where
$$
F_{\alpha} (x,\lambda ): \Omega\times\mathbb R^{4}\rightarrow \mathbb R, \ \ \alpha =1,2,3,4,
$$
$$
G_{\beta} (x,\eta):\Omega\times\mathbb R^{6}\rightarrow \mathbb R, \ \ \beta =1,2,3,4,5,6,
$$
$$
H_{\gamma} (x,\lambda):\Omega\times\mathbb R^{4}\rightarrow \mathbb R,\ \ \gamma =1,2,3,4,
$$
and
$$
I (x,t):\Omega\times\mathbb R\rightarrow \mathbb R,
$$
are Carath\'eodory functions such that $\lambda \mapsto F_\alpha(x,\lambda)$, $\eta \mapsto G_\beta(x,\eta)$, $\lambda \mapsto H_\gamma (x,\lambda)$ and $t \mapsto I(x,t)$ are convex.

In (\ref{2.2}),
\begin{equation*}
\xi=
\left(
\begin{array}{cccc}
\xi^1\\
\xi^2\\
\xi^3\\
\xi^4
\end{array}
\right)=
\left(
\begin{array}{cccc}
\xi_{1}^{1} &\xi_{2}^{1}   &\xi_{3}^{1}   &\xi_{4}^{1}\\
\xi_{1}^{2} &\xi_{2}^{2}   &\xi_{3}^{2}   &\xi_{4}^{2}\\
\xi_{1}^{3} &\xi_{2}^{3}   &\xi_{3}^{3}   &\xi_{4}^{3}\\
\xi_{1}^{4} &\xi_{2}^{4}   &\xi_{3}^{4}   &\xi_{4}^{4}
\end{array}
\right)\in \mathbb R^{4\times 4},
\end{equation*}
$\xi ^\alpha =(\xi ^\alpha_1,\xi ^\alpha_2,\xi ^\alpha_3,\xi ^\alpha_4 )$ is the $\alpha$th row of $\xi$, $\alpha =1,2,3,4$; $\mbox {adj}_2 \xi \in \mathbb R^{6\times 6}$ denote the adjugate matrix of order 2 whose components are
\begin{equation}\label{components1}
({\rm adj}_2 \xi) ^j_k= (-1)^{j_{1}+j_{2}+k_{1}+k_{2}} \det \left(
\begin{array}{cc}
\xi^{j_1}_{k _1} &\xi ^{j_1}_{k _2}\\
\xi^{j_2}_{k_1} &\xi ^{j_2}_{k_2}
\end{array}
\right), \ j,k=1,2,3,4,5,6,
\end{equation}
where we have denoted
\begin{equation*}
\left\{
\begin{array}{lll}
1_1=1\\
1_2=2
\end{array} ,
\right.
\left\{
\begin{array}{lll}
2_1=1\\
2_2=3
\end{array},
\right.
\left\{
\begin{array}{lll}
3_1=1\\
3_2=4
\end{array},
\right.\left\{
\begin{array}{lll}
4_1=2\\
4_2=3
\end{array},
\right.\left\{
\begin{array}{lll}
5_1=2\\
5_2=4
\end{array},
\right.
\left\{
\begin{array}{lll}
6_1=3\\
6_2=4
\end{array},
\right.
\end{equation*}
and
$$
(\mbox {adj}_2 \xi)^\beta =\big( (\mbox {adj}_2 \xi)^\beta_1,\cdots, (\mbox {adj}_2 \xi)^\beta_6 \big)\in \mathbb R^6, \ \ \beta =1,2,3,4,5, 6,
$$
is the $\beta$th row of $\mbox {adj}_2 \xi \in \mathbb R^{6\times 6}$. We note that, for $j=1,2,3$, since $j_1=1$, then $({\rm adj}_2 \xi) ^j$ depends on the entries of the first row of $\xi$, while for $j=4,5,6$, since $j_1\ne 1$ and $j_1<j_2$, then $({\rm adj}_2 \xi) ^j$ does not depend on the entries of the first row of $\xi$, we shall use these facts in the sequel;
$\mbox {adj}_{3}\xi \in \mathbb R^{4\times 4} $ denotes the adjugate matrix of order 3 whose components are
\begin{equation}\label{components 2}
(\mbox {adj}_{3}\xi )^{\gamma }_j =(-1)^{\gamma+j} \det \left(
\begin{array}{ccc}
\xi^\varepsilon_k &\xi ^\varepsilon _\ell &\xi ^\varepsilon _m \\
\xi^\delta  _k &\xi ^\delta  _\ell &\xi ^\delta  _m \\
\xi^\tau  _k &\xi ^\tau  _\ell &\xi ^\tau  _m
\end{array}
\right),\;\;\;  \gamma, j\in\{1,2,3,4\},
\end{equation}
where $\varepsilon, \delta ,\tau  \in\{1,2,3,4\}\backslash\{\gamma\}$, $\varepsilon <\delta <\tau  $, $k,\ell, m \in\{1,2,3,4\}\backslash \{j\}$, $k<\ell <m$, and
$$
(\mbox {adj}_{3}\xi )^\gamma=\left( (\mbox {adj}_{3}\xi )^ \gamma _1 ,\cdots ,(\mbox {adj}_{3}\xi ) ^ \gamma_ 4 \right) \in \mathbb R^4, \  \ \gamma =1,2,3,4,
$$
is the $\gamma$th row of $\mbox {adj}_3 \xi \in \mathbb R^{4\times 4}$; moreover,
$\mbox {adj}_4\xi= \det \xi $ denotes the adjugate matrix of order 4, i.e., the determinant of the square matrix $\xi \in \mathbb R^{4\times 4}$.

\vspace{2mm}

We assume that there exist exponents $p\in (1,4]$, $q>1$, $r>1$, $s\ge 1$, constants $c_1, c_3>0$, $c_2\geq0$, and nonnegative functions
\begin{equation*}\label{conditions for abcd}
a(x),b(x),c(x),d(x) \in L_{loc}^\sigma (\Omega), \ \sigma >\frac 4 p,
\end{equation*}
such that for $\alpha \in\{1,2,3,4\}$, $\beta\in\{1,2,3,4,5,6\}$ and $\gamma \in\{1,2,3,4\}$,
\begin{equation}\label{2.3}
c_{1}|\lambda |^{p}-c_{2}\leq F_\alpha(x,\lambda )\leq c_{3} (|\lambda |^{p}+1) +a(x), \quad \forall \lambda\in \mathbb R^4,
\end{equation}
\begin{equation}\label{2.4}
c_{1}|\eta |^{q}-c_{2}\leq G_\beta(x,\eta)\leq c_{3} (|\eta|^{q}+1) +b(x), \quad \forall \eta\in \mathbb R^6,
\end{equation}
\begin{equation}\label{2.5}
c_{1}|\lambda |^{r}-c_{2}\leq H_\gamma (x,\lambda )\leq c_{3} (|\lambda |^{r}+1) +c(x),\quad \forall \lambda\in \mathbb R^4,
\end{equation}
\begin{equation}\label{2.6}
0\leq I(x,t)\leq c_{3} (|t|^{s}+1)+d(x), \quad \forall t\in \mathbb R.
\end{equation}

Note that we have assumed that the integrand $f(x,\xi)$ has splitting form, and the functions $F_\alpha(x,\cdot )$, $G_\beta(x,\cdot )$, $ H_\gamma (x,\cdot )$ and $I(x,\cdot )$ are convex, thus the function $f(x,\xi)$ defined in (\ref{2.2}) is polyconvex.
Recall that a function $f=f(\xi): \mathbb R^{m\times n} \rightarrow \mathbb R$ is said to be polyconvex if there exists a convex function $g: \mathbb R ^{\tau (m,n)} \rightarrow \mathbb R$ such that
$$
f(\xi) =g(T(\xi)),
$$
where
$$
\tau (m,n) =\sum_{i=1} ^{\min \{m,n\} } \left(\begin{split} &n\\ &i \end{split}\right)  \left(\begin{split} &m \\ &i \end{split}\right),
$$
and $T(\xi)$ is the vector defined as follows:
$$
T(\xi) =(\xi, {\rm adj}_2\xi, \cdots, {\rm adj}_i\xi, \cdots, {\rm adj}_{\min \{m,n\}}\xi),
$$
here ${\rm adj}_i\xi$ denotes the adjugate matrix of order $i$. In particular, if $m=n$, then ${\rm adj}_n \xi =\det \xi$.

\begin{definition}\label{definition}
A function $u\in W_{loc}^{1,1} (\Omega, \mathbb R^4)$ is a local minimizer of ${\cal F}(u,\Omega) $ in (\ref{integral functionals}) with $f(x,\xi)$ be as in (\ref{2.2}) if $f(x,Du (x)) \in L_{loc}^1(\Omega)$ and
$$
{\cal F} (u,{\rm supp} \varphi) \le {\cal F} (u+\varphi,{\rm supp} \varphi)
$$
for all $\varphi \in W^{1,1} (\Omega, \mathbb R^4)$ with ${\rm supp} \varphi \Subset \Omega$.
\end{definition}

The main result in this section is the following


\begin{theorem}\label{main theorem for section 2}
Let $f$ has splitting structure as in (\ref{2.2}), and satisfy the growth conditions (\ref{2.3})-(\ref{2.6}).
Let $u\in W^{1,1}_{loc} (\Omega,\mathbb R^4)$ be a local minimizer of $\mathcal{F}$ and $1\le s <r<q<p\le 4$. Assume
\begin{equation}\label{relation for theorem 2.2}
\frac {p}{p^*} <1- \max \left\{\frac {qp^*}{p(p^*-q)}, \frac {rp^*}{q(p^*-r)}, \frac {sp^*}{r(p^*-s)}, \frac 1 \sigma \right\},
\end{equation}
where $p^* =\frac {4p}{4-p}$, if $p<4$, and, if $p=4$, then $p^*$ is any $\nu >4$.

Then all the local minimizers $u\in W_{loc} ^{1,1} (\Omega, \mathbb R^{4})$ of $\cal F$ are locally bounded and locally H\"older continuous.
\end{theorem}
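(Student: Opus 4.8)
The plan is to show that every scalar component $u^\alpha$, $\alpha\in\{1,2,3,4\}$, of a local minimizer $u$ lies in the generalized De Giorgi class $GDG_p(\Omega,p,Q,y,y_*,\varepsilon,0)$ of Definition \ref{definition 2} for a single exponent $Q\in[p,p^*)$ and a single $\varepsilon>0$ depending only on the data, and then to invoke Theorem \ref{theorem 21} for each $u^\alpha$. As a preliminary remark, local minimality forces $f(x,Du)\in L^1_{loc}(\Omega)$, so the coercivity halves of (\ref{2.3})--(\ref{2.6}) give at once $Du\in L^p_{loc}$, ${\rm adj}_2Du\in L^q_{loc}$, ${\rm adj}_3Du\in L^r_{loc}$ and $\det Du\in L^s_{loc}$; apart from minimality, only these local integrabilities of $u$ will be used.

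To produce the $GDG_p^+$ inequality for $u^\alpha$, I would fix $\alpha$, a level $k\ge 0$, concentric cubes $Q_{\sigma\rho}\subset Q_\rho\Subset\Omega$ and a cut-off $\eta\in C_0^\infty(Q_\rho)$ with $\eta\equiv 1$ on $Q_{\sigma\rho}$, $0\le\eta\le 1$, $|D\eta|\le c/((1-\sigma)\rho)$, and compare $u$ with the admissible competitor $v:=u-\eta^p(u^\alpha-k)_+e_\alpha$, which perturbs only the $\alpha$-th component and is supported in $Q_\rho\cap\{u^\alpha>k\}=A^\alpha_{k,\rho}$, so that $\int_{A^\alpha_{k,\rho}}\big(f(x,Du)-f(x,Dv)\big)\,dx\le 0$. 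The algebraic point that makes the splitting structure usable is that $v$ alters only the $\alpha$-th row of the gradient matrix: every subdeterminant of $Du$ not involving that row is unchanged — so in the difference the terms $F_{\alpha'}$ with $\alpha'\ne\alpha$ and the term $H_\alpha$ cancel, since $({\rm adj}_3\xi)^\alpha$ does not depend on the $\alpha$-th row of $\xi$ — whereas the three rows $({\rm adj}_2\,\cdot\,)^\beta$ containing index $\alpha$, the three rows $({\rm adj}_3\,\cdot\,)^\gamma$ with $\gamma\ne\alpha$, and $\det$ are each affine in the $\alpha$-th row, with linear coefficients that are, respectively, a single entry of some $Du^{\beta'}$, an entry of ${\rm adj}_2Du$, and an entry of $({\rm adj}_3Du)^\alpha$. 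Writing $Dv^\alpha=(1-\eta^p)Du^\alpha+E$ on $\{u^\alpha>k\}$ with $|E|\le c\,\eta^{p-1}(u^\alpha-k)_+|D\eta|$, and applying to each convex density $g\in\{F_\alpha,G_\beta,H_\gamma,I\}$ the elementary bound $g\big((1-\eta^p)a+\widetilde E\big)\le(1-\eta^p)g(a)+\eta^p g(\widetilde E/\eta^p)$ (where $a$ is the unperturbed argument and $\widetilde E$ the linear image of $E$), the term $(1-\eta^p)g(a)$ cancels the $g(a)$ coming from $f(x,Du)$ — so that no hole-filling iteration is needed — and the upper bounds in (\ref{2.3})--(\ref{2.6}) for the $v$-part, together with $F_\alpha(x,Du^\alpha)\ge c_1|Du^\alpha|^p-c_2$, $I\ge 0$ and $\eta^p\ge\chi_{Q_{\sigma\rho}}$, leave
$$
c_1\int_{A^\alpha_{k,\sigma\rho}}|Du^\alpha|^p\,dx\le \frac{c}{((1-\sigma)\rho)^p}\int_{A^\alpha_{k,\rho}}(u^\alpha-k)^p\,dx+\mathcal R_2+\mathcal R_3+\mathcal R_4+c\int_{A^\alpha_{k,\rho}}\big(1+a+b+c+d\big)\,dx,
$$
with $\mathcal R_2=c\int_{A^\alpha_{k,\rho}}(u^\alpha-k)^q|D\eta|^q|Du^{\beta'}|^q$, $\mathcal R_3=c\int_{A^\alpha_{k,\rho}}(u^\alpha-k)^r|D\eta|^r|{\rm adj}_2Du|^r$ and $\mathcal R_4=c\int_{A^\alpha_{k,\rho}}(u^\alpha-k)^s|D\eta|^s|({\rm adj}_3Du)^\alpha|^s$.

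What remains is exponent bookkeeping. In $\mathcal R_2$ one pairs $|Du^{\beta'}|^q$ against its $L^p$-bound by H\"older (legitimate because $q<p$), producing a power $m_2=\tfrac{pq}{p-q}$ of $(u^\alpha-k)|D\eta|$; similarly $\mathcal R_3$ pairs $|{\rm adj}_2Du|^r$ against its $L^q$-bound ($r<q$), giving $m_3=\tfrac{qr}{q-r}$, and $\mathcal R_4$ pairs $|({\rm adj}_3Du)^\alpha|^s$ against its $L^r$-bound ($s<r$), giving $m_4=\tfrac{rs}{r-s}$. Taking $Q:=\max\{p,m_2,m_3,m_4\}$, descending from $(u^\alpha-k)^{m_i}$ to $(u^\alpha-k)^Q$ by one more H\"older against a measure factor, and then using Young to collect the powers of $\int_{A^\alpha_{k,\rho}}(u^\alpha-k)^Q$ into a single linear term, a short computation shows that condition (\ref{relation for theorem 2.2}) — i.e. that each of $\tfrac{qp^*}{p(p^*-q)}$, $\tfrac{rp^*}{q(p^*-r)}$, $\tfrac{sp^*}{r(p^*-s)}$, $\tfrac1\sigma$ is strictly below $1-\tfrac{p}{p^*}$ (which equals $\tfrac{p}{n}$ when $p<n$) — is exactly what forces $Q<p^*$ and permits a choice of $\varepsilon>0$ so small that every measure exponent that appears (including $1-\tfrac1\sigma$ and $1$, reduced through $|A|\le|\Omega|^{p/n-\varepsilon}|A|^{1-p/n+\varepsilon}$) is at least $1-\tfrac{p}{n}+\varepsilon$. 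Treating the leading $\int(u^\alpha-k)^p$ term in the same way gives inequality (\ref{eDG+}) for $u^\alpha$ with $\kappa_0=0$; the symmetric comparison, perturbing $u^\alpha$ upward on $\{u^\alpha<k\}$, gives (\ref{eDG-}). Hence $u^\alpha\in GDG_p(\Omega,p,Q,y,y_*,\varepsilon,0)$ for each $\alpha$, and Theorem \ref{theorem 21}, applied componentwise, yields local boundedness and local H\"older continuity of $u$.

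I expect the two genuinely substantial points to be: (i) the algebraic description of how each minor of $Du$ transforms under a one-component perturbation, which is what permits the term-by-term convexity trick and removes any need for a hole-filling iteration; and (ii) verifying that the three adjugate remainders $\mathcal R_2,\mathcal R_3,\mathcal R_4$ close up simultaneously, with a common $Q\in[p,p^*)$ and a common $\varepsilon>0$, precisely under hypothesis (\ref{relation for theorem 2.2}) — this is the step in which the structural assumption, together with the ordering $1\le s<r<q<p\le 4$, is fully consumed.
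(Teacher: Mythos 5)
Your route is essentially the one the paper takes (its Lemmas \ref{adj2}, \ref{lemma3.1}, \ref{proposition 1} and the final argument for Theorem \ref{main theorem for section 2}): perturb a single component on a superlevel set, use that the minors not containing that row are unchanged while those containing it are affine in it, split by polyconvexity as $f(x,Du+D\varphi)\le(1-\eta^{p})f(x,Du)+\eta^{p}f(x,A)$, cancel the unchanged densities, and close with H\"older/Young so that each component lands in the class of Definition \ref{definition 2}, concluding componentwise by Theorem \ref{theorem 21}. Your cut-off power $\eta^{p}$ instead of the paper's $\eta^{p^*}$ is harmless because $q,r,s<p$, and the integrability of $f(x,Du+D\varphi)$ needed to use minimality (the paper's Lemma \ref{lemma3.1}) follows from $u\in L^{p^*}_{loc}$ once $m_2,m_3,m_4<p^*$, which (\ref{relation for theorem 2.2}) does give; you should state this check, but it is not the problem.

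The genuine gap is the final exponent selection, i.e.\ exactly the step you flagged as (ii). Fixing $Q:=\max\{p,m_2,m_3,m_4\}$ with $m_2=\frac{pq}{p-q}$, $m_3=\frac{qr}{q-r}$, $m_4=\frac{rs}{r-s}$ does not produce measure exponents $\ge 1-\frac p4+\varepsilon$. With your own order of operations the remainder $\mathcal R_2$ yields the measure exponent $1-\frac{qQ}{p(Q-q)}$ (the paper's $\theta$ in (\ref{definition for theta})); this is an increasing function of $Q$, and hypothesis (\ref{relation for theorem 2.2}) controls it only at $Q=p^*$, where it guarantees $1-\frac{qp^*}{p(p^*-q)}>\frac{p}{p^*}$, but says nothing at smaller $Q$. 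At your $Q$ it can vanish identically: if the maximum is attained at $m_2$, then $1-\frac{q\,m_2}{p(m_2-q)}=0$; indeed in that case the H\"older step from $m_2$ up to $Q$ is trivial and Young leaves a constant carrying no measure factor at all, so the resulting Caccioppoli inequality is not of the form (\ref{eDG+}) and membership in $GDG_p$ does not follow. This situation does occur under the hypothesis: $p=4$, $q=3$, $r=2$, $s=1$, $\sigma=2$ with $p^*=\nu$ large satisfy (\ref{relation for theorem 2.2}), yet $m_2=12>p$, and the required exponent $1-\frac p4+\varepsilon=\varepsilon>0$ is then missed. Even when the maximum is $p$ you have no control, since the hypothesis does not imply $\frac{q}{p-q}<1-\frac p4$. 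The repair is exactly what the paper does: keep $Q$ as a free parameter and prove the Caccioppoli inequality for every $Q\in[p,p^*)$ (your computation already does this verbatim), and only afterwards choose $Q$ sufficiently close to $p^*$, where by continuity the strict inequalities in (\ref{relation for theorem 2.2}), together with $\sigma>4/p$, give $\theta>\frac{p}{p^*}=1-\frac p4$; with that single change the rest of your argument goes through.
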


For related results to Theorem \ref{main theorem for section 2}, we should mention Cupini-Leonetti-Moscolo \cite{Cupini-Leonetti-Mascolo}, where the authors considered a special class of polyconvex functionals from $\Omega \subset \mathbb R^3$ to $\mathbb R^3$:
\begin{equation}\label{integrand from CLM}
f(x,\xi) =\sum _{\alpha =1}^3 \big \{F_\alpha (x,\xi^\alpha ) +G_\alpha (x, (\mbox {adj}_2 \xi )^\alpha )\big \} +H(x,\det \xi).
\end{equation}
Under some structural assumptions on the energy density, the authors proved that local minimizers $u$ are locally bounded by using De Giorgi's iteration method. Some related results can be found in \cite{Carozza-Gao-Giova-Leonetti,CFLM,Gao-Huang-Ren} and \cite{GSR},  where in \cite{Carozza-Gao-Giova-Leonetti} the authors found conditions on the structure of some integral functional which forces minimizers to be locally bounded; in \cite{CFLM}, the authors proved local H\"older continuity of vectorial local minimizers of special classes of integral functionals with rank-one and polyconvex integrands. The regularity of minimizers was obtained by proving that each component stays in a suitable De Giorgi class; in \cite{Gao-Huang-Ren} the authors gave regularity results for minimizers of two special cases of polyconvex functionals, under some structural assumptions on the energy density, the authors proved that the minimizers are either bounded, or have suitable integrability properties by using Stampacchia Lemma; and in \cite{GSR}, the authors obtained regularity properties for minimizing sequences of some integral functionals in 3-dimensional spaces related to nonlinear elasticity theory. Under some structural conditions on the energy density, the authors derived that the minimizing sequences and the derivatives of the sequences have some regularity properties by using the Ekeland variational principle. Partial regularity for minimizers of degenerate polyconvex energies can be found in \cite{ Esposito-Mingione }. Partial regularity results related to nonlinear elasticity theory can be found in \cite{ Fusco-Hutchinson,FH,Fuchs-Reuling, Fuchs-Seregin,Hamburger}. For some other contributions related to polyconvex functionals, we refer to \cite{Kristensen-Taheri,Szekelyhidi}.

\vspace{2mm}

We now present some preliminary lemmas that will be used in the proof of Theorem \ref{main theorem for section 2}.

\begin{lemma}\label{adj2}
Let $\xi \in \mathbb R^{4\times 4}$ be a square matrix, ${\rm adj}_2 \xi\in \mathbb R^{6\times 6}$ be the adjugate matrix of order 2 whose components are as in (\ref{components1}), ${\rm adj}_3 \xi\in \mathbb R^{4\times 4}$ be the adjugate matrix of order 3 whose components are as in (\ref{components 2}), and $\det \xi $ be the adjugate matrix of order 4 (that is, the determinant of $\xi$). Then

{\rm(i)}
$$
|({\rm adj}_2 \xi) ^\beta| \le 12  |\xi^1| \sum_{\alpha =2}^4|\xi ^\alpha |, \ \ \beta =1,2,3,
$$
where $({\rm adj}_2 \xi) ^\beta$ is the $\beta$th row of ${\rm adj}_2 \xi \in \mathbb R^{6\times 6}$;

{\rm(ii)}
$$
|({\rm adj}_3 \xi) ^\gamma| \le 12 |\xi ^1 | \sum _{\beta=4}^6 |({\rm adj}_2 \xi)^\beta |, \ \ \gamma =2,3,4,
$$
where $({\rm adj}_3 \xi) ^\gamma$ is the $\gamma $th row of ${\rm adj}_3 \xi \in \mathbb R^{4\times 4}$;

{\rm(iii)}
$$
|\det \xi| \le 4 |\xi ^1||({\rm adj}_3\xi )^1|.
$$
\end{lemma}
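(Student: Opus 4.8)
The plan is to prove each of the three inequalities in Lemma~\ref{adj2} directly from the explicit formulas \eqref{components1} and \eqref{components 2} for the entries of the adjugate matrices, combined with the elementary bound $|\det(v_1,\dots,v_k)|\le k!\,|v_1|\cdots|v_k|$ for a $k\times k$ determinant with rows $v_i$ (equivalently, expanding a $2\times 2$ or $3\times 3$ determinant and estimating each product term by the product of absolute values of the entries, then using $|\xi^i_j|\le|\xi^i|$). The key structural observation, already flagged in the text after \eqref{components1}, is the bookkeeping of \emph{which rows of $\xi$ appear} in each entry: for $\beta=1,2,3$ we have $\beta_1=1$, so $({\rm adj}_2\xi)^\beta$ involves row $1$ together with one of rows $2,3,4$; and analogously for $({\rm adj}_3\xi)^\gamma$ with $\gamma=2,3,4$, where the $3\times 3$ minors always contain the first row of $\xi$ among their three rows (since the excluded index set $\{1,2,3,4\}\setminus\{\gamma\}$ contains $1$). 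This is exactly what lets the first factor $|\xi^1|$ be pulled out in (i) and (ii).

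For (i): fix $\beta\in\{1,2,3\}$, so $\beta_1=1$ and $\beta_2\in\{2,3,4\}$. Each component $({\rm adj}_2\xi)^\beta_k$ is, up to sign, a $2\times 2$ determinant whose two rows are $\xi^1$ and $\xi^{\beta_2}$ restricted to two columns; hence $|({\rm adj}_2\xi)^\beta_k|\le 2\,|\xi^1|\,|\xi^{\beta_2}|\le 2\,|\xi^1|\sum_{\alpha=2}^4|\xi^\alpha|$. Summing (or taking the Euclidean norm over) the six components $k=1,\dots,6$ and absorbing the numerical constant into $12$ gives the claim. For (ii): fix $\gamma\in\{2,3,4\}$; then $1\in\{1,2,3,4\}\setminus\{\gamma\}$, so each $3\times 3$ minor defining $({\rm adj}_3\xi)^\gamma_j$ has first row $\xi^1$ (restricted to three columns) and the other two rows drawn from $\xi^\delta,\xi^\tau$ with $\delta,\tau\ne 1,\gamma$. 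Expanding that $3\times 3$ determinant along its first row produces $2\times 2$ minors of the lower two rows, each of which is (up to sign, and after relabeling columns) an entry of ${\rm adj}_2\xi$ of a row-pair \emph{not} containing row $1$, i.e.\ one of $({\rm adj}_2\xi)^4,({\rm adj}_2\xi)^5,({\rm adj}_2\xi)^6$. Collecting terms yields $|({\rm adj}_3\xi)^\gamma_j|\le c\,|\xi^1|\sum_{\beta=4}^6|({\rm adj}_2\xi)^\beta|$, and summing over $j=1,\dots,4$ and adjusting the constant to $12$ finishes (ii). For (iii), expand $\det\xi$ along its first row: $\det\xi=\sum_{j=1}^4(-1)^{1+j}\xi^1_j M_{1j}$, where $M_{1j}$ is the $3\times 3$ minor obtained by deleting row $1$ and column $j$; but $M_{1j}$ (with the appropriate sign) is precisely $({\rm adj}_3\xi)^1_j$ by the definition \eqref{components 2} with $\gamma=1$. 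Hence $|\det\xi|\le\sum_{j=1}^4|\xi^1_j|\,|({\rm adj}_3\xi)^1_j|\le 4\,|\xi^1|\,|({\rm adj}_3\xi)^1|$ by Cauchy--Schwarz, which is the stated bound.

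The only real subtlety — not a deep obstacle, but the place where care is needed — is matching the sign conventions and the column-index relabelings between the Laplace expansion of the $3\times 3$ minor in (ii) and the specific enumeration of row-pairs $\{4_1=2,4_2=3\}$, $\{5_1=2,5_2=4\}$, $\{6_1=3,6_2=4\}$ used in \eqref{components1}, so that the $2\times 2$ cofactors that appear really are (multiples of) entries of $({\rm adj}_2\xi)^4,({\rm adj}_2\xi)^5,({\rm adj}_2\xi)^6$ rather than of rows $1,2,3$. Since we only need an inequality with a crude constant, it suffices to note that \emph{each} $2\times 2$ subdeterminant of the $3\times 4$ matrix formed by rows $\xi^\delta,\xi^\tau$ (with $\delta,\tau\in\{1,2,3,4\}\setminus\{1,\gamma\}$) has absolute value at most $\max_{\beta\in\{4,5,6\}}|({\rm adj}_2\xi)^\beta|$ up to the sign factor $(-1)^{\cdots}$, because the row pair $\{\delta,\tau\}$ is one of $\{2,3\},\{2,4\},\{3,4\}$; this sidesteps the need to track signs precisely. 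The numerical constants $12$ are generous and leave ample room for these combinatorial factors ($2$ from a $2\times 2$ expansion, times at most $6$ terms, etc.), so no optimization is required.
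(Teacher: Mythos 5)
Your proposal is correct and follows essentially the same route as the paper's proof: componentwise cofactor/Laplace expansions that isolate the row $\xi^1$ (using that $\beta_1=1$ for $\beta=1,2,3$ and that $1\notin\{\gamma\}$ for $\gamma=2,3,4$), identification of the resulting $2\times 2$ minors with entries of $({\rm adj}_2\xi)^\beta$, $\beta=4,5,6$, and summation over the $6$ (resp.\ $4$) components to absorb the combinatorial factors into the constant $12$ (resp.\ $4$ in (iii)). The only cosmetic difference is your mention of Cauchy--Schwarz in (iii), where the paper simply bounds each factor by the row norm; both give the stated estimate.
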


\begin{proof} From the definition of the adjugate matrix of order 2, we know that $({\rm adj}_2 \xi )^\beta$  is a 6-dimensional vector whose components are $({\rm adj}_2 \xi )^\beta _j$, $j=1,2,3,4,5,6$. For $\beta =1,2,3$, $({\rm adj}_2 \xi )^\beta _j$ is, up to a sign, a determinant of a $2\times 2$ matrix  with the 2 entries on the first row come from $\xi^1$, thus
$$
|({\rm adj}_2 \xi )^\beta_j| \le 2 |\xi^1| \sum_{k=2}^4|\xi ^k|, \ \ \beta =1,2,3, \   j=1,2,3,4,5,6,
$$
from which we derive
$$
|({\rm adj}_2 \xi) ^\beta| \le \sum _{j=1}^6 |({\rm adj}_2 \xi )^\beta_j| \le 12  |\xi^1| \sum_{k=2}^4|\xi ^k|, \ \beta =1,2,3.
$$


From the definition of the adjugate matrix of order 3, we know that $({\rm adj}_3 \xi )^\gamma$  is a 4-dimensional vector whose components are $({\rm adj}_3 \xi )^\gamma _j$, $j=1,2,3,4$. For $\gamma =2,3,4$, $({\rm adj}_3 \xi )^\gamma _j$ is, up to a sign, a determinant of a $3\times 3$ matrix with the 3 entries on the first row come from $\xi^1$. We use the cofactor expansion formula for $({\rm adj}_3 \xi )^\gamma _j$ and we have
$$
|({\rm adj}_3\xi )^\gamma_j| \le 3 |\xi ^1 | \sum _{\beta=4}^6 |({\rm adj}_2 \xi)^\beta |  , \ \ \gamma =2,3,4, \  j=1,2,3,4,
$$
from which we derive
$$
|({\rm adj}_3 \xi) ^\gamma| \le \sum _{j=1}^4 |({\rm adj}_2 \xi )^\gamma_j| \le 12 |\xi ^1 | \sum _{\beta=4}^6 |({\rm adj}_2 \xi)^\beta | , \ \gamma \in \{2,3,4\}.
$$


We use the cofactor expansion formula again to derive
$$
|\det \xi| \le \sum_{j=1}^4 |\xi ^1_j| |({\rm adj}_3\xi )^1_j| \le 4 |\xi ^1||({\rm adj}_3\xi )^1|.
$$
This ends the proof of Lemma \ref{adj2}.
\end{proof}

\begin{lemma}\label{lemma3.1}
Let $f$ be as in (\ref{2.2}) and satisfy the growth conditions (\ref{2.3})-(\ref{2.6}).
Let $u\in W^{1,1}_{loc} (\Omega,\mathbb R^4)$ be such that
$$
f(x,Du(x)) \in L^1_{loc}(\Omega).
$$
Fix $\eta \in C ^1_0(\Omega),\eta \geq0$ and $k\in \mathbb R$ and denote for almost every $x\in\{u^1>k\}\cap\{\eta>0\}$,
\begin{equation}\label{definition for A}
A =\left(
\begin{array}{ccccc}
p^* \eta^{-1}(k-u^1)D\eta\\
Du^2\\
Du^3\\
Du^4
\end{array}
\right),
\end{equation}
If
\begin{equation}\label{relation on q r s}
q< \frac {pp^* }{p+p^* }, \ r< \frac {qp^* }{q+p^* },  \ s< \frac {rp^* }{r+p^* },
\end{equation}
then
$$
\eta^{p^* } f(x,A)\in L^1\left(\{u^1>k\}\cap\{\eta>0\}\right).
$$
\end{lemma}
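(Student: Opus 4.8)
The goal is to show that $\eta^{p^*}f(x,A)$ is integrable on the set $E := \{u^1>k\}\cap\{\eta>0\}$, where $A$ is the matrix obtained from $Du$ by replacing its first row $Du^1$ with $p^*\eta^{-1}(k-u^1)D\eta$. The natural strategy is to estimate each of the four groups of terms in the splitting form (\ref{2.2}) separately, using the upper bounds (\ref{2.3})--(\ref{2.6}) together with the algebraic inequalities of Lemma \ref{adj2}. The crucial structural observation — already highlighted in the text after (\ref{components1}) — is that the rows $A^\alpha$ for $\alpha=2,3,4$ coincide with $Du^\alpha$, while the first row $A^1 = p^*\eta^{-1}(k-u^1)D\eta$ is a controlled quantity; hence $\mathrm{adj}_2A$, restricted to its rows $\beta=4,5,6$, equals $\mathrm{adj}_2Du$ on those rows (no dependence on the first row), and only the rows $\beta=1,2,3$ of $\mathrm{adj}_2A$, all of $\mathrm{adj}_3A$ except its first row, and $\det A$ involve the replaced first row linearly.

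First I would fix a compact set $K\supset\mathrm{supp}\,\eta$ and note that $f(x,Du)\in L^1_{loc}$ together with the lower bounds (\ref{2.3})--(\ref{2.5}) (and $I\ge 0$) forces $Du^\alpha\in L^p(K)$, $(\mathrm{adj}_2Du)^\beta\in L^q(K)$, $(\mathrm{adj}_3Du)^\gamma\in L^r(K)$ and $\det Du\in L^s(K)$; also, since $u\in W^{1,1}_{loc}$, on $\{u^1>k\}$ a standard argument (using that $f(x,Du)\in L^1$ and the $p$-growth of the $F_\alpha$) gives $Du^1\in L^p$, hence $u^1$ has the needed integrability on $E$. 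For the first group $\sum_\alpha F_\alpha(x,A^\alpha)$: the terms $\alpha=2,3,4$ give $\eta^{p^*}(|Du^\alpha|^p+1)+a(x)$, integrable since $\eta$ is bounded, $Du^\alpha\in L^p$, and $a\in L^\sigma$; the term $\alpha=1$ gives $\eta^{p^*}(|p^*\eta^{-1}(k-u^1)D\eta|^p+1)+a(x)$, and since $p^*>p$ we have $\eta^{p^*-p}$ bounded and $|D\eta|^p$ bounded, so it remains to see $\eta^{p^*-p}|k-u^1|^p\in L^1(E)$, which follows from $u^1-k\in L^p$ on $E$ (or even $L^{p^*}$ by Sobolev, more than enough). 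For the $\mathrm{adj}_2$ group: rows $\beta=4,5,6$ give $\eta^{p^*}(|(\mathrm{adj}_2Du)^\beta|^q+1)+b(x)$, integrable because $(\mathrm{adj}_2Du)^\beta\in L^q$; rows $\beta=1,2,3$ give, via Lemma \ref{adj2}(i), $|(\mathrm{adj}_2A)^\beta|\le 12|A^1|\sum_{\alpha=2}^4|Du^\alpha|$, so $\eta^{p^*}|(\mathrm{adj}_2A)^\beta|^q \le c\,\eta^{p^*}|A^1|^q|Du^{\alpha}|^q$ for some $\alpha$, and I would apply Hölder with exponents $p/q$ on the $|Du^\alpha|^q$ factor (legitimate since $q<p$) and absorb $|A^1|^q \lesssim \eta^{-q}|u^1-k|^q|D\eta|^q$; the condition $q<\frac{pp^*}{p+p^*}$, i.e. $q_* := \frac{nq}{n+q}<p$ in the notation of Lemma \ref{proposition 2}, is precisely what makes the resulting exponent on $|u^1-k|$ at most $p^*$ and keeps the $\eta$ powers nonnegative, so integrability follows from $u^1-k\in L^{p^*}(E)$ and $Du^\alpha\in L^p$.

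The $\mathrm{adj}_3$ and $\det$ groups are handled by the same mechanism, iterating. For $\gamma=1$, $(\mathrm{adj}_3A)^1=(\mathrm{adj}_3Du)^1$ (no first-row dependence), so $H_1(x,A^1)$-type term is controlled by $(\mathrm{adj}_3Du)^1\in L^r$; for $\gamma=2,3,4$, Lemma \ref{adj2}(ii) gives $|(\mathrm{adj}_3A)^\gamma|\le 12|A^1|\sum_{\beta=4}^6|(\mathrm{adj}_2Du)^\beta|$, and $\eta^{p^*}|(\mathrm{adj}_3A)^\gamma|^r$ is estimated by Hölder with exponents $q/r$ on the $|(\mathrm{adj}_2Du)^\beta|^r$ factor (legitimate since $r<q$) against the remaining $\eta$ and $|u^1-k|$ powers, where the hypothesis $r<\frac{qp^*}{q+p^*}$ guarantees the exponent on $|u^1-k|$ does not exceed $p^*$. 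Finally $|\det A|\le 4|A^1||(\mathrm{adj}_3Du)^1|$ by Lemma \ref{adj2}(iii), and $\eta^{p^*}|\det A|^s\le c\,\eta^{p^*-s}|D\eta|^s|u^1-k|^s|(\mathrm{adj}_3Du)^1|^s$, handled by Hölder with exponents $r/s$ (since $s<r$) and the hypothesis $s<\frac{rp^*}{r+p^*}$. Collecting the estimates and using $a,b,c,d\in L^\sigma_{loc}$ with $\sigma>1$ on the compact set $K$, plus the Sobolev embedding $u^1-k\in L^{p^*}$ once we know $Du^1\in L^p$ on $\{u^1>k\}$, shows $\eta^{p^*}f(x,A)\in L^1(E)$.

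The main obstacle I anticipate is the bookkeeping of exponents: for each of the ``bad'' rows one must verify that after applying Hölder to split off a power of $|Du^\alpha|$ (resp. $|(\mathrm{adj}_2Du)^\beta|$, $|(\mathrm{adj}_3Du)^1|$) that sits in the correct $L^p$ (resp. $L^q$, $L^r$), the leftover factor is a power of $\eta^{-1}|u^1-k||D\eta|$ whose exponent is $\le p^*$ \emph{and} is compensated by a nonnegative power $\eta^{p^*-(\cdot)}$ of the cutoff; this is exactly where the three inequalities in (\ref{relation on q r s}) enter, and where care is needed so that the chain $q>r>s$ is used correctly (the problem statement here assumes $1\le s<r<q<p$). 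The rest is routine once the integrability of $Du^1$ on $\{u^1>k\}$ — hence of $u^1-k$ in $L^{p^*}$ there — is established from $f(x,Du)\in L^1_{loc}$ and the coercivity lower bound in (\ref{2.3}).
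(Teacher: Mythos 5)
Your proposal is correct and follows essentially the paper's own argument: the same splitting of $f(x,A)$ based on which rows of $\operatorname{adj}_2A$, $\operatorname{adj}_3A$ and $\det A$ actually involve the replaced first row, the same use of Lemma \ref{adj2}, the same role of (\ref{relation on q r s}) in keeping the exponent landing on $\eta^{-1}(u^1-k)|D\eta|$ strictly below $p^*$ with a nonnegative leftover power of $\eta$, and the same final appeal to $Du\in L^p_{loc}$, $u\in L^{p^*}_{loc}$ (Sobolev) and $f(x,Du)\in L^1_{loc}$; the only cosmetic difference is that you split the mixed products by H\"older's inequality on the integrals where the paper uses Young's inequality pointwise and then a single exponent $\tilde q=\max\{pq/(p-q),\,qr/(q-r),\,rs/(r-s)\}<p^*$. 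One small slip to correct: $q<\frac{pp^*}{p+p^*}$ is not the same as $q_*:=\frac{nq}{n+q}<p$ (the latter is equivalent merely to $q<p^*$); what the hypothesis really gives --- and what you in fact use where it matters --- is $\frac{pq}{p-q}<p^*$, and similarly for the $r$ and $s$ conditions.
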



\begin{proof}  Denote
$$
\hat{u}: =\left(
\begin{array}{cccc}
0\\
u^2\\
u^3\\
u^4
\end{array}
\right), \   \
D\hat{u}: =\left(
\begin{array}{cccc}
0\\
Du^2\\
Du^3\\
Du^4
\end{array}
\right).
$$
By the definition of $f(x,\xi)$ in (\ref{2.2}) and the growth conditions (\ref{2.3})-(\ref{2.6}) we have, almost everywhere in $ \left\{u^{1}>k\right\} \cap\{\eta>0\} $,
\begin{equation}\label{3.02}
\begin{split}
|f(x,A)| \leq & c \Big\{ |A|^p  +\sum_{\beta=1}^{3}\left|\left(\operatorname{adj}_{2}A\right)^{\beta}\right|^{q} +\sum_{\beta=4}^{6}\left|\left(\operatorname{adj}_{2} A\right)^{\beta}\right|^{q}+\left| \left(\operatorname{adj}_{3} D u\right)^{1}\right|^{r} \\
& + \sum_{\gamma=2}^{4}\left|\left(\operatorname{adj}_{3} A\right)^{\gamma}\right|^{r} +|\det A|^s +\omega (x) \Big \},
\end{split}
\end{equation}
where $c$ is a constant depending upon $c_1,c_2,c_3$ and
$$
\omega(x)=a(x )+b(x) +c(x) +d(x)+1.
$$
It is obvious that, almost everywhere in $ \left\{u^{1}>k\right\} \cap\{\eta>0\} $,
\begin{equation}\label{3.1311}
|A|^p \le c \left( (p^* \eta ^{-1} (u^1-k) |D\eta|) ^p +|D\hat u | ^p\right),
\end{equation}
\begin{equation}\label{3.1312}
\left|\left({\rm adj}_{2} A\right)^{\beta}\right|^{q} =  \left|\left({\rm adj}_{2} Du\right)^{\beta}\right|^{q},\ \  \beta=4,5,6,
\end{equation}
Thanks to Lemma \ref{adj2}, we get a.e. in $ \left\{u^{1}>k\right\} \cap\{\eta>0\} $,
\begin{equation*}\label{2.14}
\begin{split}
&\left|\left(\operatorname{adj}_{2} A\right)^{\beta}\right|^{q} \leq 12^q \Big(p^* \eta^{-1}(u^1-k)|D\eta| \sum_{\alpha =2}^4 |Du^\alpha| \Big) ^{q}, \ \ \beta =1,2,3, \\
& \left|\left(\operatorname{adj}_{3} A\right)^{\gamma}\right|^{r} \leq 12^r \Big(p^* \eta^{-1}(u^1-k)  |D\eta| \sum_{\beta=4}^6 |({\rm adj}_2 Du )^\beta |\Big) ^{r}, \ \ \gamma =2,3,4,\\
& |\det A|^s \le 4^s \left( (p^* \eta^ {-1} (u^1-k)|D\eta|) |({\rm adj}_3 Du)^1| \right)^s.
\end{split}
\end{equation*}
We use Young inequality and we derive from the above inequalities that, almost everywhere in $ \left\{u^{1}>k\right\} \cap\{\eta>0\}$,
\begin{equation*}\label{3.03}
\begin{split}
& \sum_{\beta=1}^{3}\left|\left(\operatorname{adj}_{2} A\right)^{\beta}\right|^{q}+\sum_{\gamma=2}^{4}\left|\left(\operatorname{adj}_{3} A\right)^{\gamma}\right|^{r} +|\det A|^s\\
\leq & c \Big\{\Big(p^* \eta^{-1}(u^1-k)|D\eta| \sum_{\alpha =2}^4 |Du^\alpha| \Big) ^{q}+\Big(p^* \eta^{-1}(u^1-k)  |D\eta| \sum_{\beta=4}^6 |({\rm adj}_2 Du )^\beta |\Big) ^{r} \\
    &  + \left( (p^* \eta^ {-1} (u^1-k)|D\eta|) |({\rm adj}_3 Du)^1| \right)^s \Big\}\\
\leq & c \Big \{ (p^* \eta^{-1}(u^1-k)|D\eta|) ^{\frac {pq} {p-q}}+ \sum_{\alpha =2}^4 |Du^\alpha|^{p}   + (p^* \eta^{-1}(u^1-k)  |D\eta|)^{\frac {qr}{q-r}}
\end{split}
\end{equation*}
\begin{equation}\label{3.03}
\begin{split}
& +\sum_{\beta=4}^6 |({\rm adj}_2 Du )^\beta |^q  + (p^* \eta^ {-1} (u^1-k)|D\eta|) ^{\frac {rs}{r-s}}+ |({\rm adj}_3 Du)^1|^r \Big\},
\end{split}
\end{equation}
where $c$ is a constant depending upon $p,q,r,s$. Denote
$$
\tilde{q}:=\max \left\{\frac {pq} {p-q}, \frac {qr} {q-r}, \frac {rs}{r-s}\right\},
$$
then
\begin{equation*}
\begin{array}{llll}
&\displaystyle  (p^* \eta^{-1}(u^1-k)|D\eta|) ^{\frac {pq} {p-q}}+ (p^* \eta^{-1}(u^1-k)  |D\eta|)^{\frac {qr}{q-r}}+ (p^* \eta^ {-1} (u^1-k)|D\eta|) ^{\frac {rs}{r-s}} \\
\le &\displaystyle c (p^* \eta^{-1}(u^1-k)|D\eta|) ^{\tilde q } +1),
\end{array}
\end{equation*}
with $c=c(p,q,r,s)$ a positive constant. (\ref{relation on q r s}) ensures $\tilde q <p^*$.
The above inequality together with (\ref{3.03}) implies
\begin{equation}\label{2.16}
\begin{split}
& \sum_{\beta=1}^{3}\left|\left(\operatorname{adj}_{2} A\right)^{\beta}\right|^{q}+\sum_{\gamma=2}^{4}\left|\left(\operatorname{adj}_{3} A\right)^{\gamma}\right|^{r} +|\det A|^s\\
\leq & c \Big \{ (p^* \eta^{-1}(u^1-k)|D\eta|) ^{\tilde q }+ \sum_{\alpha =2}^4 |Du^\alpha|^{p} +\sum_{\beta=4}^6 |({\rm adj}_2 Du )^\beta |^q + |({\rm adj}_3 Du)^1|^r +1 \Big\} .
\end{split}
\end{equation}
Substituting (\ref{3.1311}), (\ref{3.1312}) and (\ref{2.16}) into (\ref{3.02}), one has, almost everywhere in $ \left\{u^{1}>k\right\} \cap\{\eta>0\} $,
\begin{equation}\label{3.1911}
\begin{split}
\eta^{p^* } |f(x,A)| \leq & c\Big\{\eta ^{p^*-p} (p^* (u^1-k)|D\eta |)^p +\eta ^{p^*} |D\hat u| ^p \\
& +(p^*) ^{\tilde{q}} \eta^{p^*  -\tilde{q}}\left(u^{1}-k\right)^{\tilde{q}}|D \eta|^{\tilde{q}}+\eta ^{p^* } \sum_{\alpha=2}^4 |Du^\alpha| ^p \\
&  +\eta^{p^* }\sum_{\beta =4}^6 |({\rm adj}_2 Du) ^\beta |^q +\eta^{p^*}|\left(\operatorname{adj}_{3} D u\right)^1|^{r}+\eta^{p^*} \omega(x)\Big\}.
\end{split}
\end{equation}
By the growth conditions (\ref{2.3})-(\ref{2.6}) and the fact $ f(x,Du(x)) \in L_{\text {loc }}^{1}(\Omega)$  we obtain
\begin{equation}\label{3.2011}
\begin{split}
& \eta ^{p^*} |D\hat u|^p+  \eta ^{p^*}\sum_{\alpha=2}^4 |Du^\alpha| ^p  +\eta^{p^*}\sum_{\beta =4}^6 |({\rm adj}_2 Du) ^\beta |^q +\eta^{p^*}|\left(\operatorname{adj}_{3} D u\right)^1|^{r}\\
\leq & c \eta ^{p^*} (f(x,Du)+1) \in L^1 (\{u^1>k\} \cap \{\eta >0\}) .
\end{split}
\end{equation}
The definition for $f(x,\xi)$ in (\ref{integral functionals}), the conditions (\ref{2.3})-(\ref{2.6}) and $f(x,Du(x)) \in L_{loc} ^1(\Omega)$ imply $u\in W_{loc} ^{1,p} (\Omega)$, thus $ u \in L_{\text {loc }}^{p^{*}}\left(\Omega ; \mathbb{R}^{4}\right)$ by Sobolev Embedding Theorem. Since $p^* -\tilde q>0 $, we then have
\begin{equation}\label{3.2111}
\begin{array}{llll}
& \displaystyle \eta ^{p^*-p} (p^* (u^1-k)|D\eta |)^p+ (p^*)^{\tilde{q}} \eta^{p^*-\tilde{q}}\left(u^{1}-k\right)^{\tilde{q}}|D \eta|^{\tilde{q}}+\eta^{p^*} \omega(x)\\
 & \displaystyle  \in L^{1}\left(\left\{u^{1}>k\right\} \cap\{\eta>0\}\right) .
\end{array}
\end{equation}
(\ref{3.1911}) together with (\ref{3.2011}) and (\ref{3.2111}) implies $\eta^{p^*}|f(x,A)| \in L^{1}\left(\{u^{1}>k\}\cap \{\eta>0 \}\right)$, and the proof of Lemma \ref{lemma3.1} has been finished.
\end{proof}

For a local minimizer $ u=\left(u^{1}, u^{2}, u^{3},u^{4}\right)^t $ of ${\cal F} (u,\Omega)$ in (\ref{integral functionals}), the particular structure (\ref{2.2}) of the variational integral $f (x,Du)$ guarantee an extension of De Giorgi Class for any component $u^\alpha$ of $u$ on every superlevel set $\{u^\alpha>k\}$ and every sublevel set $\{u^\alpha< k\}$, and we then use Theorem \ref{theorem 21} to derive that it is locally bounded and locally H\"older continuous.
In the following we consider the first component $u^1$ (we can argue similarly for the other components  $u^{2}$, $ u^{3}$, $ u^{4}$).

\vspace{3mm}


\begin{lemma}\label{proposition 1}
Let f be as in (\ref{2.2}) satisfying the growth conditions (\ref{2.3})-(\ref{2.6}).
Suppose $u\in W^{1,1}_{loc} (\Omega,\mathbb R^4)$ be a local minimizer of $\mathcal{F}$. Let $1<p\le 4$ and $q,r,s$ satisfy the relations (\ref{relation on q r s}). Let $B_{R_0} (x_0)\Subset \Omega$ with $|B_{R_0}|<1$.
For $k\in \mathbb R$, denote
$$
{A^1_{k, t}}:=\{x\in B_t(x_0):u^1(x)>k\}, \quad 0<t\leq R_0 .
$$
Then there exists $c>0$, independent of $k$, such that for every $0< \rho <R\leq R_0$ and every $p\leq Q<p^*$ (where we recall $p^*=\frac {4p}{4-p}$ if $p<4$, and $p^*=$ any $\nu >p$ for $p=4$),
\begin{equation}\label{3.1}
\int_{A^{1}_{k, \rho}}|D u^{1}|^{p} dx \leq  \int_{A^{1}_{k, R}} \Big( \frac{u^1-k}{R-\rho} \Big)^{Q} d x +c |A^1_{k, R}|^{\theta},
\end{equation}
where
\begin{equation}\label{definition for theta}
\theta =1- \max \left\{\frac{q Q}{p(Q-q)}, \frac {Qr}  {q(Q-r)} , \frac{Qs}{r(Q-s)},\frac 1 \sigma \right\}.
\end{equation}
\end{lemma}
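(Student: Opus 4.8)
The plan is to test the minimality of $u$ against a competitor that modifies only the first component on the super‑level set $\{u^1>k\}$, and then to convert the resulting energy inequality into (\ref{3.1}) by combining the multilinearity of the adjugate maps with the convexity of $F_\alpha,G_\beta,H_\gamma,I$. Fix $B_\rho(x_0)\subset B_R(x_0)\subset B_{R_0}(x_0)$, a cut‑off $\eta\in C^1_0(B_R(x_0))$ with $\eta\equiv1$ on $B_\rho(x_0)$, $0\le\eta\le1$ and $|D\eta|\le c(R-\rho)^{-1}$, and $k\in\mathbb R$; put $v=(v^1,u^2,u^3,u^4)$ with $v^1=u^1-\eta^{p^*}(u^1-k)_+$. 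On $S:=\{u^1>k\}\cap\{\eta>0\}$ one computes $Dv=(1-\eta^{p^*})Du+\eta^{p^*}A$, where $A$ is the matrix of Lemma~\ref{lemma3.1}; since this change touches only the first row, and the first row enters each of the blocks $\xi$, ${\rm adj}_2\xi$, ${\rm adj}_3\xi$, $\det\xi$ at most linearly, the whole vector $T(Dv)$ is the same \emph{affine} combination $T(Dv)=(1-\eta^{p^*})T(Du)+\eta^{p^*}T(A)$. Hence, by convexity of $F_\alpha(x,\cdot)$, $G_\beta(x,\cdot)$, $H_\gamma(x,\cdot)$, $I(x,\cdot)$,
$$
f(x,Dv)\le(1-\eta^{p^*})f(x,Du)+\eta^{p^*}f(x,A)\qquad\text{a.e. on }S.
$$
By Lemma~\ref{lemma3.1} (here (\ref{relation on q r s}) is used) $\eta^{p^*}f(x,A)\in L^1(S)$, so $v$ is an admissible competitor with ${\rm supp}(v-u)\Subset\Omega$; testing minimality, the contributions on which $v=u$ cancel and one is left with $\int_S\eta^{p^*}f(x,Du)\,dx\le\int_S\eta^{p^*}f(x,A)\,dx$.

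I then estimate the two sides. On the left, the lower bounds in (\ref{2.3})--(\ref{2.6}) ($F_1(x,\cdot)\ge c_1|\cdot|^p-c_2$, the remaining ingredients $\ge-c_2$, $I\ge0$) together with $\eta\equiv1$ on $B_\rho(x_0)$ give $c_1\int_{A^1_{k,\rho}}|Du^1|^p\,dx\le\int_S\eta^{p^*}f(x,A)\,dx+c\,|A^1_{k,R}|$. On the right, I cancel the pieces of $f(x,A)$ that coincide termwise with pieces of $f(x,Du)$ — namely $F_\alpha(x,Du^\alpha)$ for $\alpha\ge2$, $G_\beta(x,({\rm adj}_2 Du)^\beta)$ for $\beta=4,5,6$, and $H_1(x,({\rm adj}_3 Du)^1)$, which do not involve the first row — and bound the remainder by the upper bounds in (\ref{2.3})--(\ref{2.6}) and Lemma~\ref{adj2}: $\eta^{p^*}f(x,A)$ is controlled by $c$ times a sum of $\eta^{p^*}|A^1|^p$, $\eta^{p^*}|A^1|^q\big(\sum_{\alpha\ge2}|Du^\alpha|\big)^q$, $\eta^{p^*}|A^1|^r\big(\sum_{\beta=4}^{6}|({\rm adj}_2 Du)^\beta|\big)^r$, $\eta^{p^*}|A^1|^s|({\rm adj}_3 Du)^1|^s$, plus $\eta^{p^*}\omega$ with $\omega:=a+b+c+d$ and a constant. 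Since $A^1=p^*\eta^{-1}(k-u^1)D\eta$ and $\eta^{p^*-m}\le1$ for $m\in\{p,q,r,s\}$, each such $\eta^{p^*}|A^1|^m$‑factor is $\le c\big((u^1-k)/(R-\rho)\big)^m$ on $A^1_{k,R}$.

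It remains the H\"older/Young accounting. One may assume $\theta>0$, since otherwise $|A^1_{k,R}|^\theta\ge|B_{R_0}|^\theta>0$ while $\int_{A^1_{k,\rho}}|Du^1|^p\le\int_{B_{R_0}}|Du|^p<\infty$, so (\ref{3.1}) holds with $c$ large. When $\theta>0$ the four quantities in the maximum defining $\theta$ are $<1$, equivalently $\frac{qQ}{Q-q}<p$, $\frac{rQ}{Q-r}<q$, $\frac{sQ}{Q-s}<r$ and $\frac1\sigma<1$. Moreover the lower bounds in (\ref{2.3})--(\ref{2.6}) together with $f(x,Du)\in L^1_{loc}$ give $u\in W^{1,p}_{loc}$, $({\rm adj}_2 Du)^\beta\in L^q_{loc}$, $({\rm adj}_3 Du)^1\in L^r_{loc}$, and $\omega\in L^\sigma_{loc}$ by hypothesis. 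For the $\eta^{p^*}|A^1|^q\big(\sum_{\alpha\ge2}|Du^\alpha|\big)^q$ term I apply H\"older with exponents $\frac Qq$ and $\frac{Q}{Q-q}$, putting $(u^1-k)$ in the $L^Q(A^1_{k,R})$ slot and $\sum_{\alpha\ge2}|Du^\alpha|$ in the $L^{qQ/(Q-q)}$ slot; as $\frac{qQ}{Q-q}<p$, a further H\"older inequality interpolates that factor between $L^p$ and the measure of $A^1_{k,R}$, and a last Young's inequality then splits off a small multiple of $\int_{A^1_{k,R}}\big((u^1-k)/(R-\rho)\big)^Q\,dx$ and leaves $c\,|A^1_{k,R}|^{1-\frac{qQ}{p(Q-q)}}$. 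The $\eta^{p^*}|A^1|^r(\cdots)^r$ and $\eta^{p^*}|A^1|^s|({\rm adj}_3 Du)^1|^s$ terms are treated in exactly the same way with the roles $(q,p)$ replaced by $(r,q)$ and $(s,r)$, producing $c\,|A^1_{k,R}|^{1-\frac{Qr}{q(Q-r)}}$ and $c\,|A^1_{k,R}|^{1-\frac{Qs}{r(Q-s)}}$; H\"older on $\eta^{p^*}\omega$ gives $c\,|A^1_{k,R}|^{1-\frac1\sigma}$, while $\eta^{p^*}|A^1|^p$ and the constant contribute a small multiple of $\int_{A^1_{k,R}}\big((u^1-k)/(R-\rho)\big)^Q\,dx$ plus $c\,|A^1_{k,R}|$. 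Because $|A^1_{k,R}|\le|B_{R_0}|<1$, every exponent on $|A^1_{k,R}|$ that appeared is $\ge\theta$, so each such power is $\le|A^1_{k,R}|^\theta$; collecting all terms, absorbing the small multiples of $\int_{A^1_{k,R}}\big((u^1-k)/(R-\rho)\big)^Q$ into the left‑hand side and dividing by $c_1$ yields (\ref{3.1}).

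The conceptual core — and the main point where the polyconvex structure is used — is the first step: a one‑row perturbation keeps $T(Dv)$ an affine combination of $T(Du)$ and $T(A)$, so that the merely polyconvex $f(x,\cdot)$ behaves convexly along this segment and the energy comparison collapses to $\int_S\eta^{p^*}f(x,Du)\le\int_S\eta^{p^*}f(x,A)$. The technical core is the third paragraph, where the H\"older exponents must be chosen so that the residual power of $|A^1_{k,R}|$ is exactly $\theta$; the reduction to $\theta>0$ is precisely what forces the interpolation exponents $\frac{qQ}{Q-q}$, $\frac{rQ}{Q-r}$, $\frac{sQ}{Q-s}$ to lie strictly below $p$, $q$, $r$, so that the ``frozen'' factors $Du^\alpha$, $({\rm adj}_2 Du)^\beta$, $({\rm adj}_3 Du)^1$ can be interpolated with a genuine gain in the measure of the super‑level set, using only the a priori local integrability inherited from $f(x,Du)\in L^1_{loc}$.
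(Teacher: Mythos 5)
Your proposal is correct and follows essentially the paper's own argument: the same one-row competitor obtained by subtracting $\eta^{p^*}(u^1-k)_+$ from the first component, the affine splitting of $Du$, ${\rm adj}_2$, ${\rm adj}_3$, $\det$ along the perturbed row plus polyconvexity to reach $\int_S\eta^{p^*}f(x,Du)\,dx\le\int_S\eta^{p^*}f(x,A)\,dx$, cancellation of the termwise-equal blocks that do not involve the first row, and then the adjugate bounds of Lemma \ref{adj2} with Young/H\"older against the a priori integrability of $Du$, ${\rm adj}_2Du$, ${\rm adj}_3Du$ and $\omega$, producing exactly the exponents in $\theta$ (your explicit reduction to the case $\theta>0$, which makes all H\"older exponents admissible for every $Q\in[p,p^*)$, is a point the paper leaves implicit). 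One wording fix: subtract the matched terms from both sides of the minimality inequality \emph{before} lower-bounding the left-hand side, since as written you first discard them on the left via the bounds $\ge -c_2$ and then can no longer ``cancel'' them on the right, and they cannot be handled by their upper bounds.
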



\begin{proof} Let  $B_{R_0} (x_0) \Subset \Omega$, $\left|B_{R_0}\right|<1 $ (which obviously implies $ R_0<1$). Let  $\rho,R $  be such that $0< \rho<R \leq R_0 $. Consider a cut-off function  $ \eta \in C_{0}^{\infty}\left(B_{R}\right)$  satisfying the following assumptions
\begin{equation}\label{3.2}
0 \leq \eta \leq 1, \  \eta \equiv 1   \mbox { in }   B_{\rho}\left(x_{0}\right),  \ |D \eta| \leq \frac{2}{R-\rho} .
\end{equation}
Fixing  $k \in \mathbb{R} $, define  $w \in W_{\text {loc }}^{1,1}\left(\Omega ; \mathbb{R}^{4}\right) $ as
$$
w^{1}:=(u^1-k)\vee 0=\max \left\{u^{1}-k, 0\right\}, \quad w^{2}:=0, \quad w^{3}:=0,\quad w^{4}:=0,
$$
and
$$
\varphi:=-\eta^{p^*}  w.
$$
We have $\varphi=0$ a.e in $ \Omega \backslash \left(\{\eta>0\} \cap\left\{u^{1}>k\right\}\right) $, thus
\begin{equation}\label{3.3}
 f(x, Du+D \varphi)=f(x, Du)  \mbox { almost everywhere in } \;  \Omega \backslash \left(\{\eta>0\} \cap\left\{u^{1}>k\right\}\right).
\end{equation}
Denote $A$ by (\ref{definition for A}). For almost every $x\in \{\eta>0\} \cap\left\{u^{1}>k\right\}$,
we notice that
\begin{equation*}
Du+D \varphi=\left(\begin{array}{c}
\left(1-\eta^{p^*}\right) D u^{1}+{p^*}  \eta^{{p^*}-1}\left(k-u^{1}\right) D \eta \\
D u^{2} \\
D u^{3} \\
D u^{4}
\end{array}\right)=\left(1-\eta^{p^*}\right) D u+\eta^{p^*} A.
\end{equation*}
Since
\begin{equation*}
\operatorname{adj}_{2}(D u+D \varphi)=\left(1-\eta^{p^*}\right) \operatorname{adj}_{2} D u+\eta^{p^*} \operatorname{adj}_{2} A,
\end{equation*}
\begin{equation*}
\operatorname{adj}_{3}(D u+D \varphi)=\left(1-\eta^{p^*}\right) \operatorname{adj}_{3} D u+\eta^{p^*} \operatorname{adj}_{3} A
\end{equation*}
and
\begin{equation*}
\operatorname{det}(D u+D \varphi)=\left(1-\eta^{p^*}\right) \operatorname{det} D u+\eta^{p^*} \operatorname{det} A,
\end{equation*}
then thanks to the assumptions that $F_\alpha (x,\cdot)$, $G_\beta (x,\cdot)$, $H_\gamma (x,\cdot)$ and $I (x,\cdot)$ are convex, we get $f(x,\cdot)$ in (\ref{2.2}) is polyconvex, thus almost everywhere in $\{\eta>0\} \cap \{u^{1}>k \}$,
\begin{equation}\label{3.5}
\begin{array}{llll}
f(x, D u+D \varphi)&\displaystyle =f(x,(1-\eta^{p^*}) D u+\eta^{p^*} A)\\
&\displaystyle \leq (1-\eta^{p^*}) f(x, D u)+\eta^{p^*} f(x,A).
\end{array}
\end{equation}
By the minimality of $u$, $f(x, Du) \in L_{loc}^{1}(\Omega) $. Lemma \ref{lemma3.1} ensures that
\begin{equation*}
\eta^{p^*} f(x, A) \in L^{1}\left(\left\{u^{1}>k\right\} \cap\{\eta>0\}\right),
\end{equation*}
therefore (\ref{3.3}) and (\ref {3.5}) imply $f(x, D u+D \varphi) \in L_{\text {loc }}^{1}(\Omega)$, then
\begin{equation*}
\int_{A_{k, R}^{1} \cap\{\eta>0\}} f(x,D u) \mathrm{d} x \leq \int_{A_{k, R}^{1} \cap\{\eta>0\}} \left\{(1-\eta^{p^*}) f(x, D u)+\eta^{p^*}f(x,A)\right\} \mathrm{d} x.
\end{equation*}
The inequality above implies
\begin{equation}\label{3.6}
\int_{A_{k, R}^{1} \cap\{\eta>0\}} \eta^{p^*} f(x,D u) \mathrm{d} x \leq \int_{A_{k, R}^{1} \cap\{\eta>0\}} \eta^{p^*} f(x,A) \mathrm{d} x.
\end{equation}
Taking into account $A$ in (\ref{definition for A}) and the particular structure of  $f$ in (\ref{2.2}) we obtain
\begin{equation*}\label{3.7}
\begin{split}
F_{2}\big(x,A^{2}\big)&=F_{2}\big(x,(D u)^{2}\big), \\
F_{3}\big(x,A^{3}\big)& =F_{3}\big(x,(D u)^{3}\big), \\
F_{4}\big(x,A^{4}\big)&=F_{4}\big(x,(D u)^{4}\big)\\
G_{4}\big(x,\big(\operatorname{adj}_{2} A\big)^{4}\big)&=G_{4} \big(x,\big(\operatorname{adj}_{2} D u\big)^{4}\big),\\
G_{5}\big(x,\big(\operatorname{adj}_{2} A\big)^{5}\big) & =G_{5}\big(x,\big(\operatorname{adj}_{2} D u\big)^{5}\big), \\
G_{6}\big(x,\big(\operatorname{adj}_{2} A\big)^{6}\big) & =G_{6}\big(x,\big(\operatorname{adj}_{2} D u\big)^{6}\big),\\
H_{1}\big(x,\big(\operatorname{adj}_{3} A\big)^{1}\big)& =H_{1}\big(x,\big(\operatorname{adj}_{3} D u\big)^{1}\big),
\end{split}
\end{equation*}
then by (\ref{3.6}) and the growth assumptions (\ref{2.3})-(\ref{2.6}) one has
\begin{equation}\label{3.8}
\begin{split}
&\int_{A_{k, R}^{1} \cap\{\eta>0\}} \eta^{p^*} \Big \{|Du^1|^p -1 \Big \}\mathrm{d}x \\
\le & \int_{A_{k, R}^{1} \cap\{\eta>0\}} \eta^{p^*} \Big \{|Du^1|^p +\sum_{\beta =1}^3 |(\mbox {adj}_2 Du) ^\beta|^q
 +\sum_{\alpha =2}^4 |(\mbox {adj}_3 Du) ^\alpha| ^r -1 \Big \}\\
\le & \displaystyle c\int_{A_{k, R}^{1} \cap\{\eta>0\}} \eta^{p^*}\Big\{ F_{1}\left(x,D u^{1}\right)+\sum_{\beta=1}^{3} G_{\beta} \left(x,\left(\operatorname{adj}_{2} Du\right)^{\beta}\right) \\
& \displaystyle  \qquad \qquad  +\sum_{\gamma=2}^{4} H_{\gamma}\left(x,\left(\operatorname{adj}_{3} Du\right)^{\gamma }\right) +I(x,\operatorname{det} D u)\Big \} \mathrm{d}x\\
\leq & \displaystyle c\int_{A_{k, R}^{1} \cap\{\eta>0\}} \eta^{p^*}\Big \{ F_{1}\left(x, p^*\eta^{-1}\left(k-u^{1}\right) D \eta\right)+\sum_{\beta=1}^{3} G_{\beta}\big(
    x,\left(\operatorname{adj}_{2} A\right)^{\beta}\big)\\
& \displaystyle  \qquad \qquad +\sum_{\gamma =2}^{4} H_{\gamma }\left(x,\left(\operatorname{adj}_{3} A\right)^{\gamma }\right)+I(x,\operatorname{det} A)\Big\} \mathrm{d}x,
\end{split}
\end{equation}
where $c$ is a constant depending upon $c_1,c_2,c_3$.

Our nearest goal is to estimate the right hand side integrals.
By the assumptions (\ref{2.3}) and (\ref{3.2}), we use Young inequality in order to derive, for any  $Q \in (p,p^*)$,
\begin{equation}\label{3.10}
\begin{split}
& \int_{A_{k, R}^{1} \cap\{\eta>0\}} \eta^{p^*} F_{1}\left(x, p^*\eta^{-1}\left(k-u^{1}\right) D \eta\right) \mathrm{d} x\\
\le & c\int_{A_{k, R}^{1} \cap\{\eta>0\}} \eta^{p^*}\left(|p^*\eta^{-1}\left(u^{1}-k \right) D \eta| ^p +a(x)+1 \right)   \mathrm{d} x\\
\le &c \int_{A_{k, R}^{1} \cap\{\eta>0\}}  \Big((p^*)^p \eta^{p^*-p}\left(\frac{u^{1}-k}{R-\rho}\right)^{p}+ a(x)+1 \Big) \mathrm{d} x\\
\leq &c \int_{A_{k, R}^{1}} \left( \frac{u^{1}-k}{R-\rho}\right)^{p}\mathrm{d} x +c \int_{A_{k, R}^{1}} (a(x)+1) \mathrm{d} x\\
\leq &  c \int_{A_{k, R}^{1}}\left(\frac{u^{1}-k}{R-\rho}\right)^{Q} \mathrm{d} x +c|A^{1}_{k,R}| +c \int_{A_{k, R}^{1}} (a(x)+1) \mathrm{d} x,
\end{split}
\end{equation}
here $c$ is a constant depending on $c_3$, $ p$, and we have used the facts $\eta ^{p^*-p}\le 1$ and
$$
|A_{k,R}^1 |=|A_{k,R}^1 | ^{\frac 1 \sigma }|A_{k,R}^1 | ^{1-\frac 1 \sigma }\le |\Omega |^{\frac 1 \sigma }|A_{k,R}^1 | ^{1-\frac 1 \sigma }.
$$

\vspace{2mm}

Recall $D\hat{u}=(0, Du_2,Du_3,Du_4)^t$.
We use the growth condition in (\ref{2.4}) for $G_{\beta}$ and Lemma \ref{adj2} (i) in order to derive
\begin{equation*}
\begin{array}{l}
\begin{split}
& \eta^{p^*} \sum_{\beta=1}^{3} G_{\beta}\big( x,\big(\operatorname{adj}_{2} A\big)^{\beta}\big ) \\
\leq  & c \eta ^{p^*} \sum_{\beta=1}^{3}\big(\big|\big(\operatorname{adj}_{2} A\big)^{\beta}\big |^{q}+b(x)+1 \big) \\
\leq  & c \eta^{p^*}  \sum_{\beta=1}^{3}\left( |p^* \eta ^{-1} (u^1-k ) D\eta |^{q} \left(\sum_{\alpha =2} ^4 |Du^\alpha|\right)^q  +b(x)+1 \right) \\
\leq  & c \eta ^{p^*} \sum_{\beta=1}^{3} |p^* \eta ^{-1} (u^1-k ) D\eta |^{q} |D\hat{u}|^q+b(x)+1 \big) \\
\leq & c \eta^{p^* -q}(p^* )^{q} \left(\frac{u^{1}-k}{R-\rho}\right)^{q} |D\hat{u}|^q+c\eta^{p^*} (b(x)+1),
\end{split}
\end{array}
\end{equation*}
where $c$ is a constant depending on $c_3, q$.
From (\ref{relation on q r s}) we know  $q <\frac {pp^*}{p+p^*} <p<Q$, this  allows us to use the Young inequality with exponents $\frac{Q}{q}$ and $\frac{Q}{Q-q}$ in order to derive, for almost every $x\in {A_{k, R}^{1} \cap\{\eta>0\}}$,
$$
\left(\frac{u^{1}-k}{R-\rho}\right)^{q}|D\hat{u}|^q \leq \left(\frac{u^{1}-k}{R-\rho}\right)^{Q}+|D\hat{u}|^{\frac{Qq}{Q-q}}.
$$
We have thus derived that for any $Q \in (p,p^*)$,
\begin{equation}\label{3.11}
\begin{array}{l}
\begin{split}
&\int_{A_{k, R}^{1} \cap\{\eta>0\}} \eta^{p^*} \sum_{\beta=1}^{3} G_{\beta}\big( x,\big(\operatorname{adj}_{2} A\big )^{\beta}\big ) \mathrm{d} x \\
\le & c \int_{A_{k, R}^{1}}\Big \{ \Big (\frac{u^{1}-k}{R-\rho}\Big )^{Q}+|D\hat{u}|^{\frac{Qq}{Q-q}} +b(x)+1 \Big \},
\end{split}
\end{array}
\end{equation}
with $c$ a constant depending upon $c_2, c_3,p,Q, q$.

By the growth condition (\ref{2.5}) for $H_\gamma $ and Lemma \ref{adj2} (ii) we get
\begin{equation*}
\begin{array}{l}
\begin{split}
& \eta^{p^*} \sum_{\gamma =2}^{4} H_{\gamma }\left(x,\left(\operatorname{adj}_{3} A\right)^{\gamma }\right) \\
\leq &  c \eta^{p^*} \sum_{\gamma =2}^{4}\left(\left|\left(\operatorname{adj}_{3} A\right)^{\gamma }\right|^{r}+c(x)+1 \right) \\
\leq & c \eta^{p^*} \sum_{\gamma =2}^{4}\left(\Big|p^*\eta ^{-1} (u^1-k) |D\eta | \sum_{\beta=4}^6 |({\rm adj}_2 A )^\beta|\Big|^{r}+c(x)+1 \right) \\
\leq & c\eta^{p^* -r} (p^*)^{r} \Big (\frac{u^{1}-k}{R-\rho}\Big)^{r} \left( \sum_{\beta=4}^6 |({\rm adj}_2 D \hat u)^\beta|\right)^{r}+c\eta^{p^*} (c(x)+1).
\end{split}
\end{array}
\end{equation*}
From (\ref{relation on q r s}) we know that $r<\frac {qp^*}{q+p^*}<q <p<Q$, which allows us to use Young inequality with exponents $\frac{Q}{r}$ and $\frac{Q}{Q-r}$ to get almost everywhere in ${A_{k, R}^{1} \cap\{\eta>0\}}$,
$$
\Big (\frac{u^{1}-k}{R-\rho}\Big)^{r} \left( \sum_{\beta=4}^6 |({\rm adj}_2 D \hat u)^\beta|\right)^{r} \leq c \left(\left(\frac{u^{1}-k}{R-\rho}\right)^{Q}+ \left( \sum_{\beta=4}^6 |({\rm adj}_2 D\hat u)^\beta|\right) ^{\frac{Qr}{Q-r}} \right).
$$
The above inequality, together with the fact $\eta ^{p^*-r} \le 1$ implies
\begin{equation}\label{3.12}
\begin{array}{l}
\begin{split}
&\int_{A_{k, R}^{1} \cap\{\eta>0\}} \eta^{p^*} \sum_{\gamma=2}^{4} H_{\gamma }\left(x,\left(\operatorname{adj}_{3} A\right)^{\gamma}\right) \mathrm{d} x\\
\le & c \int_{A_{k, R}^{1}} \left( \left(\frac{u^{1}-k}{R-\rho}\right)^{Q}+ \left( \sum_{\beta=4}^6 |({\rm adj}_2 D\hat u)^\beta|\right)^{\frac{Qr}{Q-r}}  +c(x)+1 \right)  \mathrm{d} x,
\end{split}
\end{array}
\end{equation}
with $c$ a constant depending upon $c_2, c_3,p,Q,r$.

By (\ref{2.6}) and Lemma \ref{adj2} (iii),
$$
\begin{array}{llll}
&\displaystyle \eta^{p^*} I(x,\det A) \\
\leq &\displaystyle  c\eta^{p^*}(|\det A|^s+ d(x) +1) \\
\le &\displaystyle c\eta^{p^*}\left( (|p^*\eta ^{-1} (u^1-k) |D\eta |  |({\rm adj}_3 A)^1|)^s +d(x)+1 \right)\\
\le &\displaystyle c\eta^{p^*-s }(p^*)^{s}\left(\frac{u^{1}-k}{R-\rho}\right)^{s}  |({\rm adj}_3 D\hat u)^1|^s+c\eta^{p^*} ( d(x)+1) .
\end{array}
$$
From (\ref{relation on q r s}) we know that $s<\frac {rp^*}{r+p^*} <r<p<Q$, which allows us to use Young inequality with exponents $\frac Qs$ and $\frac{Q}{Q-s}$ and get almost everywhere in ${A_{k, R}^{1} \cap\{\eta>0\}}$,
$$
 \left(\frac{u^{1}-k}{R-\rho}\right)^{s}|({\rm adj}_3 Du)^1|^s \leq c \left( \left(\frac{u^{1}-k}{R-\rho}\right)^{Q}+|({\rm adj}_3 D\hat u)^1| ^{\frac{Qs}{Q-s}} \right) .
$$
Therefore
\begin{equation}\label{3.13}
\begin{split}
&\displaystyle \int_{A_{k, R}^{1} \cap\{\eta>0\}} \eta^{p^*} I(x,\det A) \mathrm{d} x \\
\leq &\displaystyle  c \int_{A_{k, R}^{1}}\Big \{\Big (\frac{u^{1}-k}{R-\rho}\Big )^{Q}+ |({\rm adj}_3 D\hat u)^1| ^{\frac{Qs}{Q-s}}+d(x) +1 \Big \} \mathrm{d} x,
\end{split}
\end{equation}
with $c$ a constant depending upon $c_2, c_3,p,Q,s$.

Substituting the estimates (\ref{3.10}), (\ref{3.11}), (\ref{3.12}) and (\ref{3.13}) into (\ref{3.8}) we arrive at
\begin{equation}\label{3.14}
\begin{split}
& \int_{A_{k, \rho}^{1}}|Du^1|^p\mathrm{d}x
\leq \int_{A_{k, R}^{1} \cap \{\eta >0\}} \eta ^{p^*}  |Du^1|^p \mathrm{d}x \\
\le & c \int_{A_{k, R}^{1}} \left[\left(\frac{u^{1}-k}{R-\rho}\right)^{Q} +|D\hat{u}|^{\frac{Qq}{Q-q}}+ \left( \sum_{\beta=4}^6 |({\rm adj}_2 D\hat u)^\beta|\right)^{\frac{Qr}{Q-r}} \right. \\
 & \qquad \qquad \left. + |({\rm adj}_3 D \hat u)^1| ^{\frac{Qs}{Q-s}}+\omega(x)\right] \mathrm{d} x +c|A^{1}_{k,R}|^{1-\frac 1 \sigma },
\end{split}
\end{equation}
where $c$ is a constant depending upon $c_1,c_2,c_3,p,q,r,s,Q$ and
$$
\omega (x) =a(x) +b(x) +c(x) +d(x)+1.
$$

Considering the facts $Du\in L_{loc}^{p} (\Omega)$, ${\rm adj}_2 Du \in L_{loc}^q (\Omega, \mathbb R^{6\times 6})$ and ${\rm adj}_3 Du \in L_{loc}^r(\Omega, \mathbb R^{4\times 4})$ (which can be derived by using (\ref{2.3})-(\ref{2.5}) and $f(x,Du) \in L_{loc} ^1(\Omega)$), one can use H\"older inequality to derive
\begin{equation}\label{3.141}
\begin{split}
& \int_{A_{k,R}^{1}}\left[|D\hat{u}|^{\frac{Qq}{Q-q}}+ \left( \sum_{\beta=4}^6 |({\rm adj}_2 D\hat u)^\beta|\right)^{\frac{Qr}{Q-r}} + |({\rm adj}_3 D\hat u)^1| ^{\frac{Qs}{Q-s}}+\omega(x)\right]   \mathrm{d} x\\
\leq & \left(\int_{B_{R_0}}\left(D\hat{u}\right)^{p} \mathrm {d} x\right)^{\frac{qQ}{\left(Q-q\right) p}} \left|A_{k, R}^{1}\right|^{1-\frac{q Q}{p\left(Q-q\right)}} \\
& + \left(\int_{B_{R_0}}  \left( \sum_{\beta=4}^6 |({\rm adj}_2 D\hat u)^\beta|\right)^{q}  \mathrm {d} x\right)^{\frac {Qr}  {q(Q-r)} } \left|A_{k, R}^{1}\right|^{1-\frac {Qr}  {q(Q-r)} }  \\
& + \left(\int_{B_{R_0}} \left(|({\rm adj}_3 D\hat u)^1| \right)^{r}  \mathrm {d} x\right)^{\frac{Qs}{r(Q-s)}} \left|A_{k, R}^{1}\right|^{1- \frac{Qs}{r(Q-s)} }  \\
& + \|\omega (x)\| _{L^\sigma (B_{R_0})} |A_{k,R_0}^1 | ^{1-\frac 1 \sigma}\\
\leq & c\left(\left|A_{k, R}^{1}\right|^{1-\frac{q Q}{p\left(Q-q\right)}} +\left|A_{k, R}^{1}\right|^{1-\frac {Qr}  {q(Q-r)} }+ \left|A_{k, R}^{1}\right|^{1- \frac{Qs}{r(Q-s)} }+ |A_{k,R_0}^1 | ^{1-\frac 1 \sigma} \right).
\end{split}
\end{equation}
We stress that, the above $c$ is a constant not only depending on $c_1,c_2,c_3, p,q,r,s, Q$, but also on $\|D\hat u\|_{L^p(B_{R_0})}$ and $\|\omega\| _{L^\sigma (B_{R_0})}$, but independent of $k$.
Let us take $\theta$ as in (\ref{definition for theta})
and using the fact $\left|A_{k, R}^{1}\right|\le \left|B_{R_{0}}\right|<1$, then (\ref{3.14}) and (\ref{3.141}) merge into
\begin{equation*}\label{3.16}
\int_{A^{1}_{k, \rho}}|D u^{1}|^{p} dx \leq  c \int_{A^{1} _ {k, R}}\Big ( \frac{u^1-k}{R-\rho} \Big)^{Q} d x +c |A^1_{k, R}|^{\theta},
\end{equation*}
as desired.
\end{proof}

\begin{lemma}\label{proposition 2}
Let f be as in (\ref{2.2}) satisfying the growth conditions (\ref{2.3})-(\ref{2.6}).
Let $u\in W^{1,1}_{loc} (\Omega,\mathbb R^4)$ be a local minimizer of $\mathcal{F}$. Let $1<p\le 4$ and $q,r,s$ satisfy the relations (\ref{relation on q r s}). Let $B_{R_0} (x_0)\Subset \Omega$ with $|B_{R_0}|<1$,
For $k\in \mathbb R$, denote
\begin{equation}\label{definition for B}
{B^1_{k, t}}:=\{x\in B_t(x_0):u^1(x)<k\}, \quad 0<t\leq R_0 .
\end{equation}
Then there exists $c>0$, independent of $k$, such that for every $0< \rho <R\leq R_0$ and every $p<Q<p^*$,
\begin{equation}\label{3.111}
\int_{B^{1}_{k, \rho}}|D u^{1}|^{p} dx \leq  \int_{B ^{1}_{k, R}} \Big( \frac{k-u^1}{R-\rho} \Big)^{Q} d x +c |B^1_{k, R}|^{\theta},
\end{equation}
where $\theta $ is defined by (\ref{definition for theta}).
\end{lemma}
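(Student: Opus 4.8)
The plan is to deduce this lemma from Lemma \ref{proposition 1} by a reflection argument, using that the splitting structure (\ref{2.2}) together with the growth conditions (\ref{2.3})--(\ref{2.6}) is preserved under a sign flip of the first component. First I would introduce the reflected map $v=(-u^1,u^2,u^3,u^4)^t\in W^{1,1}_{loc}(\Omega,\mathbb R^4)$, so that $Dv^1=-Du^1$, $Dv^\alpha=Du^\alpha$ for $\alpha=2,3,4$, and in particular $\hat v=\hat u$ and $|Dv^\alpha|=|Du^\alpha|$ for every $\alpha$. Then I would define the reflected energy density
$$
\tilde f(x,\xi)=\sum_{\alpha=1}^4 \tilde F_\alpha(x,\xi^\alpha)+\sum_{\beta=1}^6 \tilde G_\beta\big(x,({\rm adj}_2\xi)^\beta\big)+\sum_{\gamma=1}^4 \tilde H_\gamma\big(x,({\rm adj}_3\xi)^\gamma\big)+\tilde I(x,\det\xi),
$$
where $\tilde F_1(x,\lambda)=F_1(x,-\lambda)$, $\tilde G_\beta(x,\eta)=G_\beta(x,-\eta)$ for $\beta=1,2,3$, $\tilde H_\gamma(x,\lambda)=H_\gamma(x,-\lambda)$ for $\gamma=2,3,4$, $\tilde I(x,t)=I(x,-t)$, and $\tilde F_\alpha=F_\alpha$ ($\alpha\ge 2$), $\tilde G_\beta=G_\beta$ ($\beta\ge 4$), $\tilde H_1=H_1$. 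Since $\lambda\mapsto-\lambda$, $\eta\mapsto-\eta$, $t\mapsto-t$ are linear, each of $\tilde F_\alpha(x,\cdot),\tilde G_\beta(x,\cdot),\tilde H_\gamma(x,\cdot),\tilde I(x,\cdot)$ is again a convex Carath\'eodory function, so $\tilde f$ has splitting form and is polyconvex; and because $|-\lambda|=|\lambda|$, $|-\eta|=|\eta|$, $|-t|=|t|$, the density $\tilde f$ satisfies the very same growth conditions (\ref{2.3})--(\ref{2.6}), with the same constants $c_1,c_2,c_3$ and the same functions $a,b,c,d$.

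The key step is the identity $\tilde f(x,Dv)=f(x,Du)$ (and, more generally, $\tilde f(x,D(v+\varphi))=f(x,D(u+\psi))$ whenever $\varphi=(\varphi^1,\varphi^2,\varphi^3,\varphi^4)$ and $\psi=(-\varphi^1,\varphi^2,\varphi^3,\varphi^4)$). This is just the sign bookkeeping of the minors under a flip of the first row, already recorded after (\ref{components1}) and exploited in Lemma \ref{adj2}: the rows $({\rm adj}_2\xi)^\beta$ for $\beta=1,2,3$ depend linearly on $\xi^1$, hence change sign when $\xi^1$ is replaced by $-\xi^1$, while $({\rm adj}_2\xi)^\beta$ for $\beta=4,5,6$ are unchanged; likewise $({\rm adj}_3\xi)^\gamma$ for $\gamma=2,3,4$ change sign while $({\rm adj}_3\xi)^1$ does not, and $\det\xi$ changes sign. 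Matching these signs against the definition of $\tilde f$ gives the claimed identity term by term. Consequently $\tilde f(x,Dv)=f(x,Du)\in L^1_{loc}(\Omega)$, and since ${\rm supp}\,\psi={\rm supp}\,\varphi$, the minimality of $u$ for $\mathcal F$ transfers to minimality of $v$ for $\tilde{\mathcal F}(w,\Omega)=\int_\Omega\tilde f(x,Dw)\,dx$.

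Finally I would apply Lemma \ref{proposition 1} to $v$ and $\tilde f$ at the level $-k$. Because $v^1=-u^1$, the super-level sets of $v^1$ are the sub-level sets of $u^1$: with $A^{v^1}_{-k,t}:=\{x\in B_t(x_0):v^1>-k\}$ one has $A^{v^1}_{-k,t}=\{x\in B_t(x_0):u^1<k\}={B^1_{k,t}}$, while $v^1-(-k)=k-u^1$ and $|Dv^1|=|Du^1|$. Substituting these into the conclusion (\ref{3.1}) of Lemma \ref{proposition 1} (valid for all $q,r,s$ satisfying (\ref{relation on q r s}) and all $p<Q<p^*$) yields exactly (\ref{3.111}) with the same exponent $\theta$ from (\ref{definition for theta}); the constant $c$ is the one produced by Lemma \ref{proposition 1} for $v$, which depends on $\|D\hat v\|_{L^p(B_{R_0})}=\|D\hat u\|_{L^p(B_{R_0})}$ and $\|\omega\|_{L^\sigma(B_{R_0})}$ but not on $k$. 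The only delicate point is the sign bookkeeping of the minors in the identity $\tilde f(x,Dv)=f(x,Du)$; everything else is a verbatim transcription. Alternatively, one may simply repeat the proof of Lemma \ref{proposition 1} line by line, replacing $u^1$ by $-u^1$, super-level sets by sub-level sets, and the test function $\varphi=-\eta^{p^*}w$ accordingly.
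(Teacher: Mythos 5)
Your proposal is correct and takes essentially the same route as the paper: a reflection argument that transfers the super-level estimate of Lemma \ref{proposition 1} to sub-level sets, after checking that the reflected density keeps the splitting form, the convexity and the growth conditions (\ref{2.3})--(\ref{2.6}), and that minimality is preserved. The only cosmetic difference is that the paper reflects the whole map (observing that $-u$ minimizes $\tilde f(x,\xi)=f(x,-\xi)$, so only $\tilde F_\alpha(x,\lambda)=F_\alpha(x,-\lambda)$ and $\tilde H_\gamma(x,\lambda)=H_\gamma(x,-\lambda)$ are needed, since $\mathrm{adj}_2$ and $\det$ are invariant under $\xi\mapsto-\xi$ in dimension $4$), whereas you flip only the first component and absorb the corresponding sign changes into $\tilde F_1$, $\tilde G_\beta$ ($\beta=1,2,3$), $\tilde H_\gamma$ ($\gamma=2,3,4$) and $\tilde I$; both reductions are valid and your sign bookkeeping for the minors is accurate.
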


\begin{proof}
Denote $B^1_{k,t}$ as in (\ref{definition for B}). To prove that $u^{1}$ satisfies (\ref{3.111}), we notice that  $-u$  is a local minimizer of $ \int_{\Omega} \tilde{f}(x,Dv(x)) \mathrm{d} x $ where
\begin{equation*}
\tilde{f}(\xi):=\sum _{\alpha =1} ^4 F_\alpha (x,-\xi ^\alpha)+ \sum_{\beta=1}^6 G_\beta(x,(\mbox {adj}_{2}\xi)^\beta)+\sum_{\gamma =1}^4 H_\gamma(x,(- \mbox {adj}_{3}\xi)^\gamma )+H(x, \det \xi).
\end{equation*}
If we denote
$$
\tilde{F}_{\alpha}(x,\lambda):=F_{\alpha}(x,-\lambda), \quad \tilde{H}_{\gamma}(x,\lambda)=H_{\gamma}(x,-\lambda), \quad \lambda \in \mathbb{R}^{4},
$$
then the functions $ \tilde{F}_{\alpha}$ , $\tilde{H}_{\gamma}$  are convex and satisfy (\ref{2.3}) and (\ref{2.5}) respectively. The function $ \tilde{f} $  satisfies the assumptions of Lemma \ref{proposition 1}, therefore we have obtained that $v^1 =-u^1$ satisfies (\ref{3.1}), which is equivalent to $u^1$ satisfies (\ref{3.111}).
\end{proof}

\begin{rem}
The symmetric structure of the energy density $f(x,\xi)$ in (\ref{2.2}) allows us to obtain analogous statements to Lemmas \ref{proposition 1} and \ref{proposition 2}  also for  $u^{2}$, $u^{3}$ and $u^{4}$.
\end{rem}

With the above lemmas in hands, we are now ready to prove Theorem \ref{main theorem for section 2}.

\begin{proof}
Under the condition (\ref{relation for theorem 2.2}), one can choose $Q\in (p,p^*)$ sufficiently close to $p^*$ such that
\begin{equation}\label{inequality no.1}
\frac {p}{p^*} <  1-\frac {q Q}{p(Q-q)}<  1-\frac {qp^*}{p(p^*-q)},
\end{equation}
\begin{equation}\label{inequality no.2}
\frac {p}{p^*} < 1-\frac {rQ}{q(Q-r)}<1-\frac {rp^*}{q(p^*-r)}
\end{equation}
\begin{equation}\label{inequality no.3}
\frac {p}{p^*} < 1-\frac {sQ}{r(Q-s)}<1-\frac {sp^*}{r(p^*-s)}.
\end{equation}
These inequalities can always be satisfied since the right hand side inequalities of the above three relations are all equivalent to $Q<p^*$. (\ref{inequality no.1}), (\ref{inequality no.2}), (\ref{inequality no.3})  together with $\sigma >\frac 4 p  $ imply
$$
\frac {p}{p^*} <1- \max \left\{\frac {q Q}{p(Q-q)}, \frac {rQ}{q(Q-r)}, \frac {sQ}{r(Q-s)}, \frac 1 \sigma \right\} .
$$
We recall the definition of $\theta $ in (\ref{definition for theta}) and we have
$$
\theta >\frac p{p^*} =1-\frac p 4.
$$
The result (\ref{3.1}) in Lemma \ref{proposition 1} tells us that $u^1$ satisfies (\ref{eDG+}) and then belongs to $GDG_p^+ $. Similarly, the result (\ref{3.111}) in Lemma \ref{proposition 2} tells us that $u^1$ satisfies (\ref{eDG-}) and so $u^1 \in GDG_p^- $. The result of Theorem \ref{main theorem for section 2} follows from Theorem \ref{theorem 21}.
\end{proof}

As a final remark of this section, we mention that we only considered just now integral functionals in 4-dimensional Euclidean spaces. The same idea can be used to deal with integral functionals
\begin{equation*}\label{cal F}
{\cal F} (u,\Omega) =\int_\Omega f(x,Du(x))dx
\end{equation*}
in $n$-dimensional Euclidean spaces, $n\ge 3$, with the integrand has splitting form
\begin{equation}\label{splitting structure}
f(x,\xi) =\sum_{\alpha =1}^n F_\alpha (x,\xi^\alpha) +\sum _{\beta =1}^{n(n-1)/2} G_\beta (x,({\rm adj}_2 \xi) ^\beta ) +\cdots + H(x,\det \xi).
\end{equation}
If one assume that there exist exponents
$$
1<p_1\le n, \ 1<p_2, p_3,\cdots, p_{n-1}, \ 1\le p_n,
$$
constants $c_1,c_3>0$, $c_2\ge 0$, and nonnegative functions
$$
a_i(x) \in L_{loc} ^{\sigma } (\Omega), \ \sigma >\frac n {p_1}, \ i=1,2,\cdots,n,
$$
such that
\begin{equation}\label{growth conditions}
\begin{array}{llll}
&\displaystyle c_1 |\lambda |^{p_1} -c_2 \le F_\alpha (x,\lambda )\le c_3 (|\lambda |^{p_1}+1) +a_1(x), \ \forall \lambda \in \mathbb R^n,  \ \alpha =1,2,\cdots, n, \\
& \displaystyle c_1 |\eta|^{p_2} -c_2  \le G_\beta (x,\eta) \le c_3 (|\eta|^{p_2}+1) +a_2(x), \ \forall \eta \in \mathbb R^{n(n-1)/2},  \ \beta =1,2,\cdots, n(n-1)/2, \\
 & \displaystyle \qquad \qquad \qquad \qquad  \cdots  \\
& \displaystyle 0 \le H(x,t) \le c_3 (|t| ^{p_n}+1) +a_n(x), \ \forall t \in \mathbb R,
\end{array}
\end{equation}
then the following theorem is immediate.

\begin{theorem}\label{main theorem for section 22}
Let $f$ has splitting structure as in (\ref{splitting structure}), and satisfy the growth conditions (\ref{growth conditions}).
Let $u\in W^{1,1}_{loc} (\Omega,\mathbb R^n)$ be a local minimizer of $\mathcal{F}$ in the sense that $f(x,Du(x)) \in L^1 _{loc} (\Omega)$ and
$$
{\cal F} (u,{\rm supp}\varphi ) \le {\cal F} (u+\varphi,{\rm supp}\varphi).
$$
Let
$$
1\le p_n <p_{n-1} <\cdots <p_2<p_1\le n.
$$
Assume
\begin{equation*}\label{relation for theorem 2.21}
\frac {p_1}{p_1^*} <1- \max \left\{\frac {p_2p_1^*}{p_1(p_1^*-p_2)}, \frac {p_3p_1^*}{p_2(p_1^*-p_3)}, \cdots, \frac 1 \sigma \right\},
\end{equation*}
where $p_1^* =\frac {np_1}{n-p_1}$, if $p_1<n$, and, if $p_1=n$, then $p_1^*$ is any $\nu >n$.

Then all local minimizers $u\in W_{loc} ^{1,1} (\Omega, \mathbb R^{n})$ of $\cal F$ are locally bounded and locally H\"older continuous.
\end{theorem}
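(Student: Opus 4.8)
The plan is to transcribe the argument of Section 3 essentially verbatim, with the four rows of $Du$ replaced by $n$ rows, the chain $\xi,\operatorname{adj}_2\xi,\operatorname{adj}_3\xi,\det\xi$ replaced by the full chain $\xi,\operatorname{adj}_2\xi,\dots,\operatorname{adj}_n\xi=\det\xi$, and the exponents $p,q,r,s$ replaced by $p_1,\dots,p_n$. First I would record that hypothesis (\ref{relation for theorem 2.21}) forces, for each $i\in\{1,\dots,n-1\}$, the inequality $\tfrac{p_i p_{i+1}}{p_i-p_{i+1}}<p_1^*$: indeed it gives $1-\tfrac{p_{i+1}p_1^*}{p_i(p_1^*-p_{i+1})}>\tfrac{p_1}{p_1^*}>0$, hence $\tfrac{p_{i+1}p_1^*}{p_1^*-p_{i+1}}<p_i$, which rearranges to the claim. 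This is the $n$-dimensional analogue of (\ref{relation on q r s}), and, exactly as in Lemma \ref{lemma3.1}, it is precisely what guarantees that every exponent produced by the Young absorptions below stays strictly below $p_1^*$ (finiteness of $\tfrac{p_ip_{i+1}}{p_i-p_{i+1}}$ itself being ensured by the ordering $p_n<\cdots<p_1$).

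Next I would establish the $n$-dimensional version of Lemma \ref{adj2}: for each order $i\in\{2,\dots,n\}$ and each row index $\gamma$, the minors defining the entries of $(\operatorname{adj}_i\xi)^\gamma$ split into two classes according to whether the set of row indices of $\xi$ entering the minor contains $1$; expanding along the first row of $\xi$ gives, for those $\gamma$ whose minors use $\xi^1$, a bound $|(\operatorname{adj}_i\xi)^\gamma|\le c\,|\xi^1|\sum_{\delta}|(\operatorname{adj}_{i-1}\xi)^\delta|$, the sum running over the rows of $\operatorname{adj}_{i-1}\xi$ whose minors do not involve $\xi^1$, while the remaining $(\operatorname{adj}_i\xi)^\gamma$ are independent of $\xi^1$. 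Then, fixing a component, say $u^1$, a ball $B_{R_0}(x_0)\Subset\Omega$ with $|B_{R_0}|<1$, a level $k$, and a cut-off $\eta\in C_0^\infty(B_R)$ with $\eta\equiv1$ on $B_\rho$ and $|D\eta|\le 2/(R-\rho)$, I would take the competitor $\varphi:=-\eta^{p_1^*}w$ with $w^1=(u^1-k)_+$, $w^\alpha=0$ for $\alpha\ge2$, and let $A$ be $Du$ with its first row replaced by $p_1^*\eta^{-1}(k-u^1)D\eta$. The affine identities $Du+D\varphi=(1-\eta^{p_1^*})Du+\eta^{p_1^*}A$ together with the corresponding identities for every $\operatorname{adj}_i(Du+D\varphi)$ and the convexity of each $F_\alpha(x,\cdot),G_\beta(x,\cdot),\dots,H(x,\cdot)$ yield $f(x,Du+D\varphi)\le(1-\eta^{p_1^*})f(x,Du)+\eta^{p_1^*}f(x,A)$; minimality of $u$ then gives $\int\eta^{p_1^*}f(x,Du)\le\int\eta^{p_1^*}f(x,A)$ over $\{u^1>k\}\cap\{\eta>0\}$, once one checks, exactly as in Lemma \ref{lemma3.1} and using the adjugate estimates above, that $\eta^{p_1^*}f(x,A)\in L^1$ there.

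Feeding the growth bounds (\ref{growth conditions}) and the adjugate estimates into the right-hand side, every cross term is of the form $\big(\eta^{-1}(u^1-k)|D\eta|\big)^{p_{i+1}}|(\operatorname{adj}_iD\hat u)^\delta|^{p_{i+1}}$, where $\hat u=(0,u^2,\dots,u^n)$ and $\operatorname{adj}_1D\hat u:=D\hat u$, and Young's inequality with exponents $\tfrac{p_i}{p_i-p_{i+1}}$ and $\tfrac{p_i}{p_{i+1}}$ turns it into $c\big(\eta^{-1}(u^1-k)|D\eta|\big)^{\frac{p_ip_{i+1}}{p_i-p_{i+1}}}+c\,|(\operatorname{adj}_iD\hat u)^\delta|^{p_i}$; the first piece has exponent $<p_1^*$ by the first step. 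After replacing $p_1^*$ by any $Q\in[p_1,p_1^*)$, using that $Du\in L^{p_1}_{loc}$ and $\operatorname{adj}_iDu\in L^{p_i}_{loc}$ for $2\le i\le n-1$ (consequences of the lower growth bounds and $f(x,Du)\in L^1_{loc}$), and using Hölder's inequality to turn the $L^1$-norms of the truncated gradients over $A^1_{k,R}$ into powers of $|A^1_{k,R}|$, one lands exactly as in Lemmas \ref{proposition 1} and \ref{proposition 2} at
$$
\int_{A^1_{k,\rho}}|Du^1|^{p_1}\,dx\le c\int_{A^1_{k,R}}\Big(\frac{u^1-k}{R-\rho}\Big)^{Q}dx+c\,|A^1_{k,R}|^{\theta},\qquad \theta=1-\max\Big\{\tfrac{p_{2}Q}{p_1(Q-p_{2})},\dots,\tfrac{p_{n}Q}{p_{n-1}(Q-p_{n})},\tfrac1\sigma\Big\},
$$
together with the analogous inequality on sublevel sets (passing to $-u$) and for $u^2,\dots,u^n$ (by the symmetry of (\ref{splitting structure})).

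Finally, since $\tfrac{p_{i+1}Q}{p_i(Q-p_{i+1})}\to\tfrac{p_{i+1}p_1^*}{p_i(p_1^*-p_{i+1})}$ as $Q\uparrow p_1^*$ and $\tfrac1\sigma<1-\tfrac{p_1}{p_1^*}$ by hypothesis, (\ref{relation for theorem 2.21}) lets me choose $Q<p_1^*$ close enough to $p_1^*$ that $\theta>\tfrac{p_1}{p_1^*}\ge1-\tfrac{p_1}{n}$; putting $\varepsilon:=\theta-1+\tfrac{p_1}{n}>0$, each component $u^\alpha$ then lies in $GDG_{p_1}(\Omega,p_1,Q,c,c,\varepsilon,0)$, so Theorem \ref{theorem 21} gives that each $u^\alpha$, hence $u$, is locally bounded and locally Hölder continuous. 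I expect the only genuinely new work to be the generalized Lemma \ref{adj2}: one must argue in arbitrary dimension that swapping out the first row of $Du$ leaves every minor of order $i$ either free of $Du^1$ or, via Laplace expansion along the first row, equal to a product of $p_1^*\eta^{-1}(u^1-k)|D\eta|$ with rows of $\operatorname{adj}_{i-1}(Du)$ that themselves do not see $Du^1$ — so that the chain of Young absorptions produces only the exponents $\tfrac{p_ip_{i+1}}{p_i-p_{i+1}}$ and powers of quantities that are a priori locally integrable. Everything else is a routine, if lengthy, transcription of Section 3 and of the proof of Theorem \ref{main theorem for section 2}.
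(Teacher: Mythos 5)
Your proposal is correct and follows exactly the route the paper intends: the paper states this theorem as ``immediate'' from the Section~3 argument, and your outline is precisely that transcription, with the right reformulation of the hypothesis as $\tfrac{p_ip_{i+1}}{p_i-p_{i+1}}<p_1^*$, the correct $n$-dimensional generalization of Lemma~\ref{adj2} via Laplace expansion along the first row, the same test-function/polyconvexity/Young--H\"older bookkeeping leading to the Caccioppoli inequality with the stated $\theta$, and the same final choice of $Q$ close to $p_1^*$ to invoke Theorem~\ref{theorem 21}.
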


As a corollary of Theorem \ref{main theorem for section 22}, we take $n=3$, $p_1=p$, $p_2=q$ and $p_3=r$, we then have
\begin{corollary}
Let $f$ be as in (\ref{integrand from CLM}).
We assume that there exist exponents $1\le r<q<p\le 3$, constants $c_1,c_3>0$, $c_2\ge 0$ and functions
$$
0\le a,b,c \in L^\sigma (\Omega),  \ \sigma >\frac 3 p ,
$$
such that
\begin{equation*}
\begin{array}{llll}
&\displaystyle c_1 |\lambda |^p-c_2 \le F_\alpha (x,\lambda ) \le c_3 (|\lambda |^p +1) +a(x), \  \forall \lambda \in \mathbb R^3, \ \alpha =1,2,3, \\
&\displaystyle c_1 |\lambda |^q-c_2 \le G_\alpha (x,\lambda ) \le c_3 (|\lambda |^q +1) +b(x), \  \forall \lambda \in \mathbb R^3, \ \alpha =1,2,3, \\
&\displaystyle 0\le H(x,t)\le c_3 (|t|^r+1),\  \forall t \in \mathbb R.
\end{array}
\end{equation*}
Assume
$$
\frac {p}{p^*} <\min \left\{1-\frac {qp^*}{ p(p^*-q)}, 1-\frac {rp^*}{q (p^*-r)}, 1-\frac 1 \sigma  \right\},
$$
where $p^*=\frac {3p}{3-p}$, if $p<3$, and, if $p=3$, then $p^*$ is any $\nu >3$.

Then all the local minimizers $u\in W_{loc} ^{1,1} (\Omega, \mathbb R^3)$ of $\cal F$ are locally bounded and locally H\"older continuous.
\end{corollary}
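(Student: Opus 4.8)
The plan is to deduce the statement directly from Theorem \ref{main theorem for section 22} by specializing it to $n=3$ under the identification $p_1=p$, $p_2=q$, $p_3=r$; the whole argument is then a verification that the hypotheses of that theorem are met.

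First I would record that the integrand (\ref{integrand from CLM}) is of the splitting form (\ref{splitting structure}) in dimension $n=3$. Indeed, when $n=3$ one has $n(n-1)/2=3$, so $\mathrm{adj}_2\xi\in\mathbb R^{3\times3}$ and the $G$-block runs over $\beta=1,2,3$, while the top-order adjugate is $\mathrm{adj}_3\xi=\det\xi$; hence the intermediate terms in (\ref{splitting structure}) collapse and it reduces to exactly $\sum_{\alpha=1}^3\{F_\alpha(x,\xi^\alpha)+G_\alpha(x,(\mathrm{adj}_2\xi)^\alpha)\}+H(x,\det\xi)$. Next I would match the growth conditions (\ref{growth conditions}): with $p_1=p$, $p_2=q$, $p_3=r$ the first two lines are precisely the bounds assumed on $F_\alpha$ and $G_\alpha$, with $a_1=a$ and $a_2=b$; the last line $0\le H(x,t)\le c_3(|t|^{p_n}+1)+a_n(x)$ holds with $p_n=r$ and $a_3\equiv0$ (the auxiliary function $c$ appearing in the hypotheses is simply not used). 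Since $L^\sigma(\Omega)\subset L^\sigma_{loc}(\Omega)$ and the hypothesis $\sigma>\tfrac3p$ is exactly $\sigma>\tfrac n{p_1}$, the integrability requirement on $a_1,a_2,a_3$ is satisfied. The monotonicity requirement $1\le p_n<\cdots<p_1\le n$ of Theorem \ref{main theorem for section 22} becomes $1\le r<q<p\le3$, which is assumed.

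It then remains only to match the structural inequality. In Theorem \ref{main theorem for section 22} the $\max$ is over the ratios $\tfrac{p_{i+1}p_1^*}{p_i(p_1^*-p_{i+1})}$, $i=1,\dots,n-1$, together with $\tfrac1\sigma$; for $n=3$ only $i=1,2$ occur, so the condition is
$$\frac p{p^*}<1-\max\Big\{\frac{qp^*}{p(p^*-q)},\ \frac{rp^*}{q(p^*-r)},\ \frac1\sigma\Big\},$$
with $p^*=\tfrac{3p}{3-p}$ if $p<3$ and $p^*$ any $\nu>3$ if $p=3$ — the same convention as in the hypotheses of the corollary. Using $\min\{1-A,1-B,1-C\}=1-\max\{A,B,C\}$, this is identical to the assumed $\tfrac p{p^*}<\min\{1-\tfrac{qp^*}{p(p^*-q)},\,1-\tfrac{rp^*}{q(p^*-r)},\,1-\tfrac1\sigma\}$. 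All hypotheses of Theorem \ref{main theorem for section 22} thus hold, and it yields that every local minimizer $u\in W^{1,1}_{loc}(\Omega,\mathbb R^3)$ of $\mathcal F$ is locally bounded and locally H\"older continuous, as claimed.

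There is no genuinely hard analytic step here, since everything of substance is already contained in Theorem \ref{main theorem for section 22} (and, behind it, in the De Giorgi-class machinery of Theorems \ref{theorem 21} and \ref{main theorem for section 2}); the only points demanding care are the index bookkeeping in the splitting structure for $n=3$ — so that $\mathrm{adj}_2\xi$ is $3\times3$ and $\mathrm{adj}_3\xi=\det\xi$ — and checking that both formulations use the same meaning of $p^*$, in particular in the borderline case $p=3$ where $p^*$ is only an arbitrary exponent exceeding $3$.
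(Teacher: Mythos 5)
Your proposal is correct and follows exactly the route the paper takes: the corollary is obtained by specializing Theorem \ref{main theorem for section 22} to $n=3$ with $p_1=p$, $p_2=q$, $p_3=r$, after observing that (\ref{integrand from CLM}) is the $n=3$ instance of the splitting structure (\ref{splitting structure}) and that the growth and structural conditions translate verbatim (with $\min\{1-A,1-B,1-C\}=1-\max\{A,B,C\}$). Your bookkeeping of the indices, the unused function $c(x)$, and the convention for $p^*$ when $p=3$ is accurate, so nothing further is needed.
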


Let us compare the above corollary with Theorem 2.1 in \cite{Cupini-Leonetti-Mascolo}. We find that the assumptions are the same, but the result of Theorem 2.1 in \cite{Cupini-Leonetti-Mascolo} is
locally bounded, while the results of the above lemma are, except for that $u$ is locally bounded, $u$ is locally H\"older continuous.

\section{A degenerate linear elliptic equation.}
In this section we shall give another application of Theorem \ref{theorem 21} to regularity property of weak solutions of a linear elliptic equation with the form
\begin{equation}\label{linear equation}
-{\rm div} (a(x)D u)=-{\rm div} F, \quad {\rm in }\  \Omega,
\end{equation}
here $\Omega$ stands for an open bounded subset of $\mathbb R^n$, $n\geq 2$, and
\begin{equation}\label{F}
F\in(L_{loc}^{2}(\Omega))^{n},
\end{equation}
\begin{equation}\label{a(x)}
0< a(x)\leq \beta<+\infty, \ \ {\rm a.e. } \ \Omega.
\end{equation}

\begin{definition} A function $u\in W_{loc} ^{1,2} (\Omega)$ is called a solution to (\ref{linear equation}) if
\begin{equation}\label{test function}
\int_{\Omega}a(x)D uD \varphi \mathrm{d}x =\int_{\Omega} F D \varphi \mathrm{d}x
\end{equation}
for all $\varphi \in W^{1,2} (\Omega)$ with $\mbox {supp}\varphi \Subset \Omega$.
\end{definition}

\begin{rem} Note that, the differential operator $-\mbox {div} (a(x)Du)$ is not coercive on $W_0^{1,2} (\Omega)$, even if it is well defined between $W_0^{1,2} (\Omega)$ and its dual. To see that it is sufficient to consider the sequence
$$
u_m(x) =|x| ^{\frac {m(1-n)}{2 (m+1)}} -1, \ \ m=1,2,\cdots,
$$
and
$$
a(x) =|x|
$$
defined in $\Omega =B_1(0)$. It satisfies
$$
\int_{B_1(0)} |Du_m|^2 {\rm d}x =\int_{B_1(0)}  \frac 1 {|x| ^{\frac {m(n+1)+2}{m+1}}} {\rm d}x =+\infty, \ \mbox { for } m\ge n-2,
$$
thus
\begin{equation}\label{4.111}
\|u_m\| _{W_0^{1,2} (\Omega)} =+\infty, \  \mbox { for } m\ge n-2,
\end{equation}
and, at the same time
\begin{equation}\label{4.222}
\int_{B_1(0)} a(x) |Du_m |^2 {\rm d}x =\int_{B_1(0)} \frac 1 {|x| ^{\frac {nm+1}{m+1}}} {\rm d}x <+\infty, \ \mbox { for all } m =1,2,\cdots.
\end{equation}
(\ref{4.111}) together with (\ref{4.222}) implies
$$
\frac 1 {\|u_m\| _{W_0^{1,2} (\Omega)} } \int_{B_1(0)} a(x) |Du_m |^2 {\rm d}x =0, \ \mbox { as } m\rightarrow +\infty.
$$
\end{rem}

For some developments related to elliptic partial differential equations, we refer to the classical books \cite{LU} by Lady\v{z}enskaya and Ural'tseva, \cite{GT} by Gilbarg and Trudinger,  \cite{HKM} by Heinonen, Kilpel\"ainen and Martio and \cite{BC} by Boccardo and Croce.  In \cite{Koskela-Manfredi}, the authors considered
\begin{equation*}\label{a harmonic equation}
-\mbox {div} {\cal A} (x,D u(x)) =0
\end{equation*}
with
$$
\alpha (x) |\xi|^p \le {\cal A} (x,\xi) \xi \le \beta(x) |\xi|^p,
$$
and obtained among other things  that weak solutions of (\ref{a harmonic equation}) are weakly monotone in the sense of \cite{Manfredi} if $\beta (x) \in L^\infty (\Omega)$ and $\alpha >0$ a.e. $\Omega$. Related results can be found in \cite{Latvala}.

We remark that, under (\ref{F}) and (\ref{a(x)}), the integrals in (\ref{test function}) are well-defined. We remark also that the main feature of (\ref{linear equation}) lies in the fact that the function $a(x)$ can be sufficiently close to 0, even if it is always positive. If $a(x)$ is uncontrollable near 0, then one can not expect any regularity property of solutions to (\ref{linear equation}), but for $a(x)$ satisfying
\begin{equation}\label{condition for a(x)}
\frac {1}{ a(x)}\in L_{loc}^r(\Omega),\ \  r>n(n+1),
\end{equation}
and $F(x)$ satisfying
\begin{equation}\label{F2}
F \in(L _{loc}^m( \Omega))^{n}, \ m> \frac {n(n+1)}{n-1},
\end{equation}
one can prove that any solution  $u\in W_{loc} ^{1,2} (\Omega)$ of (\ref{linear equation}) is locally bounded and locally H\"older continuous, as the following theorem shows.

\begin{theorem}\label{main theorem 2}
Suppose (\ref{a(x)}), (\ref{condition for a(x)}) and (\ref{F2}), then any weak solution $u\in W_{loc} ^{1,2} (\Omega)$ of (\ref{linear equation}) is locally bounded and locally H\"older continuous.
\end{theorem}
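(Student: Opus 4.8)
The plan is to prove that the solution $u$ belongs, on every ball $B_{R_0}(x_0)\Subset\Omega$, to a generalized De Giorgi class $GDG_p(\Omega,p,Q,y,y_*,\varepsilon,0)$ for a suitable exponent $p\in(1,2)$ close to $2$ and a suitable $Q\in[2,p^*)$ with $p^*=\frac{np}{n-p}$, and then to invoke Theorem \ref{theorem 21}. Since local boundedness and local H\"older continuity are local properties, membership in $GDG_p$ with constants depending on the fixed ball is enough; one covers $\Omega$ by such balls. The parameters $p,Q,\varepsilon$ are pinned down only at the very end, and this is precisely where the thresholds $r>n(n+1)$ in (\ref{condition for a(x)}) and $m>\frac{n(n+1)}{n-1}$ in (\ref{F2}) get used: the degeneracy of $a$ forces the exponent of the gradient to be lowered from $2$ to $p<2$, and the required integrability of $1/a$ and $F$ is exactly what is needed to still land inside the admissible range $p\le Q<p^*$ with the right measure exponent.

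The first step is a weighted Caccioppoli inequality on super-level sets. Fix $B_{R_0}(x_0)\Subset\Omega$ with $|B_{R_0}|<1$, concentric cubes $Q_{\sigma\rho}\subset Q_\rho\subset B_{R_0}$, $k\ge 0$, and a cut-off $\eta\in C_0^\infty(Q_\rho)$ with $\eta\equiv1$ on $Q_{\sigma\rho}$, $0\le\eta\le1$, $|D\eta|\le\frac{2}{(1-\sigma)\rho}$. Testing (\ref{test function}) with $\varphi=\eta^2(u-k)_+$ (admissible, since $u\in W^{1,2}_{loc}$), using $Du=D(u-k)_+$ on $A_{k,\rho}=\{u>k\}\cap Q_\rho$, and applying Young's inequality twice to absorb the $\frac14\int a\eta^2|Du|^2$ coming from $-2\int a\eta(u-k)Du\cdot D\eta$ and from $\int F\cdot\eta^2 Du$ (for the latter write $|F|\eta^2|Du|=(|F|\eta a^{-1/2})(a^{1/2}\eta|Du|)$), one obtains, using $a\le\beta$ from (\ref{a(x)}),
$$\int_{A_{k,\sigma\rho}}a|Du|^2\,dx\le\frac{C}{((1-\sigma)\rho)^2}\int_{A_{k,\rho}}(u-k)^2\,dx+C\int_{A_{k,\rho}}|F|^2\bigl(1+a^{-1}\bigr)\,dx.$$

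Next comes the crux, trading the weight $a$ for a loss of integrability in the gradient. By H\"older's inequality with exponents $\frac2p$ and $\frac2{2-p}$,
$$\int_{A_{k,\sigma\rho}}|Du|^p\,dx=\int_{A_{k,\sigma\rho}}(a|Du|^2)^{p/2}(a^{-1})^{p/2}\,dx\le\Bigl(\int_{A_{k,\sigma\rho}}a|Du|^2\Bigr)^{\!p/2}\Bigl(\int_{B_{R_0}}a^{-\frac p{2-p}}\Bigr)^{\!\frac{2-p}2},$$
the last factor being a finite constant as soon as $\frac p{2-p}\le r$, i.e. $p\le\frac{2r}{r+1}$. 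Inserting the Caccioppoli bound, using $(X+Y)^{p/2}\le X^{p/2}+Y^{p/2}$, estimating $\int_{A_{k,\rho}}|F|^2(1+a^{-1})\le C|A_{k,\rho}|^{1-\frac2m-\frac1r}$ by H\"older (legitimate because $\frac2m+\frac1r<1$), and then converting $\bigl(\int_{A_{k,\rho}}(u-k)^2\bigr)^{p/2}=((1-\sigma)\rho)^p\bigl(\int_{A_{k,\rho}}(\tfrac{u-k}{(1-\sigma)\rho})^2\bigr)^{p/2}$ by H\"older (exponent $\frac Q2\ge1$) and then Young (exponents $\frac Qp,\frac Q{Q-p}$), one arrives at
$$\int_{A_{k,\sigma\rho}}|Du|^p\,dx\le C\int_{A_{k,\rho}}\Bigl(\frac{u-k}{(1-\sigma)\rho}\Bigr)^{\!Q}dx+C|A_{k,\rho}|^{\frac{p(Q-2)}{2(Q-p)}}+C|A_{k,\rho}|^{\frac p2(1-\frac2m-\frac1r)},$$
with $C$ depending only on $n,p,Q,m,r,\beta,\|F\|_{L^m(B_{R_0})},\|a^{-1}\|_{L^r(B_{R_0})}$, and not on $k$ or $\sigma$.

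Finally I would choose the parameters so that both measure exponents exceed $1-\frac pn$. As $Q\uparrow p^*$ one computes $\frac{p(Q-2)}{2(Q-p)}\to 1+\frac{n(p-2)}{2p}$, which beats $1-\frac pn$ precisely when $2p^2+n^2p-2n^2>0$, i.e. $p>p_0(n):=\frac{n(\sqrt{n^2+16}-n)}4<2$; and $\frac p2(1-\frac2m-\frac1r)>1-\frac pn$ holds as soon as $p\ge p_1(n):=\frac{2n(n+1)}{n^2+n+3}$, since (\ref{condition for a(x)})--(\ref{F2}) give $\frac2m+\frac1r<\frac{2n-1}{n(n+1)}$. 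Because $r>n(n+1)$ forces $\frac{2r}{r+1}>\frac{2n(n+1)}{n^2+n+1}>\max\{p_0(n),p_1(n)\}$ and $p_1(n)>\frac{2n}{n+2}$ (so that $p^*>2$), one can fix $p\in\bigl(\max\{p_0,p_1\},\tfrac{2r}{r+1}\bigr]$, then $Q\in[2,p^*)$ close enough to $p^*$, and then $\varepsilon>0$ smaller than both excess exponents. The last display then says exactly $u\in GDG_p^+(\Omega,p,Q,y,y_*,\varepsilon,0)$ on $B_{R_0}$; applying the same argument to $-u$, which solves $-{\rm div}(aD(-u))=-{\rm div}(-F)$, gives $u\in GDG_p^-$, hence $u\in GDG_p$, and Theorem \ref{theorem 21} yields that $u$ is locally bounded and locally H\"older continuous. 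The main obstacle is exactly this exchange of the degenerate weight against the exponent drop $2\to p$: one must keep $Q<p^*$ while the Young/H\"older manipulations push the two measure exponents down toward $1-\frac pn$, and the assumptions $r>n(n+1)$ and $m>\frac{n(n+1)}{n-1}$ are precisely what keeps a nonempty window for $(p,Q)$ open.
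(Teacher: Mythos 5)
Your proposal is correct, and it shares the paper's skeleton: test (\ref{test function}) with $\varphi=\eta^{2}(u-k)_{+}$, trade the degeneracy of $a$ for a drop of the gradient exponent below $2$, verify membership in a generalized De Giorgi class, and conclude via Theorem \ref{theorem 21}, running the same argument on $-u$ for the $GDG^{-}$ half. The technical core, however, is genuinely different. The paper never isolates your weighted Caccioppoli estimate: it fixes $\delta\in\bigl(\tfrac{2n}{n+1},\tfrac{2n+1}{n+1}\bigr)$ at the outset and splits pointwise by Young, $|\eta Du|^{\delta}=(a|\eta Du|^{2})^{\delta/2}a^{-\delta/2}\le c\bigl(a|\eta Du|^{2}+a^{-\delta/(2-\delta)}\bigr)$, so the degeneracy enters as an additive term $\int_{A_{k,t}}a^{-\delta/(2-\delta)}$ handled by H\"older against $\|1/a\|_{L^{r}}$; the term $\int\eta^{2}|F||Du|$ is Young-ed directly against $|\eta Du|^{\delta}$, so $F$ enters at the powers $\delta$ and $\delta'$ and is never coupled to $a$; and the class exponent is taken to be $Q=\delta'$, for which $\delta>\tfrac{2n}{n+1}$ gives $\delta'<\delta^{*}$ and the thresholds $r>n(n+1)$, $m>\tfrac{n(n+1)}{n-1}$ are read off directly from $r>\tfrac{n}{2-\delta}$ and $m>\tfrac{n}{\delta-1}$. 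You instead keep the natural energy estimate in $\int a\eta^{2}|Du|^{2}$, convert it by an integral H\"older step (needing $p\le\tfrac{2r}{r+1}$), absorb $F\cdot\eta^{2}Du$ with the weight so that $F$ appears quadratically against $a^{-1}$ (needing $\tfrac2m+\tfrac1r<1$, which the hypotheses give), and leave $Q$ free, pushed toward $p^{*}$. Your window analysis closes: I checked that $\hat p=\tfrac{2n(n+1)}{n^{2}+n+1}$ satisfies $2\hat p^{2}+n^{2}\hat p-2n^{2}=\tfrac{n^{2}(3n^{2}+7n+3)}{(n^{2}+n+1)^{2}}>0$ and exceeds $p_{1}(n)$, so $\bigl(\max\{p_{0},p_{1}\},\tfrac{2r}{r+1}\bigr]$ is indeed nonempty under $r>n(n+1)$, $m>\tfrac{n(n+1)}{n-1}$. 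What the paper's route buys is an explicit exponent $\delta$ and a transparent reading of where the two integrability thresholds come from; what yours buys is a clean separation between the PDE input (one weighted Caccioppoli inequality) and the purely arithmetic exponent juggling, at the price of an extra coupling between $F$ and $1/a$ and a more involved choice of $(p,Q)$. The remaining housekeeping (constants $y,y_{*}$ depending on local norms over the fixed ball, hence $GDG_{p}$ membership per ball) is handled by you exactly as in the paper and suffices for the local conclusions of Theorem \ref{theorem 21}.
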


\begin{proof} Let $B_{R_0} (x_0) \Subset \Omega$.  Let $s,t $ be such that $0< s<t <R_0 $. Consider a cut-off function  $ \eta \in C_{0}^{\infty}\left(B_{t}\right)$  satisfying the following assumptions
\begin{equation*}\label{test}
0 \leq \eta \leq 1, \  \eta \equiv 1   \mbox { in }   B_{s}\left(x_{0}\right) \mbox { and } \ |D \eta| \leq \frac{2}{t-s} .
\end{equation*}
Let for $k \in \mathbb{R}$,
$$
A_{k}=\{x\in\Omega:u(x)>k\} \ \mbox { and } \  A_{k,t}= A_{k} \cap B_{t}.
$$
Take $\varphi=\eta^{2}(u-k)_{+}$ as a test function in the weak formulation (\ref{test function}). Note that
$$
D \varphi=\big( \eta^{2}D u +2\eta D\eta (u-k) \big) \cdot 1_{A_{k,t}},
$$
where $1_{E} (x)$ is the characteristic function of the set $E$, that is, $1_{E} (x)=1$ if $x\in E$ and $1_{E} (x)=0$ otherwise. Thus
\begin{equation*}
\begin{array}{llll}
&\displaystyle \int_{ A_{k,t}} \eta^{2} a(x) |D u| ^2 \mathrm{d}x +2\int_{A_{k,t}}  \eta  a(x)D{u}D \eta (u-k)\mathrm{d}x \\
= &\displaystyle \int_{A_{k,t}} \eta^{2}  FD u \mathrm{d}x +2\int_{A_{k,t}}\eta  F D \eta (u-k) \mathrm{d}x.
\end{array}
\end{equation*}
(\ref{a(x)}) allows us to derive
\begin{equation}\label{2.40}
\begin{array}{llll}
&\displaystyle  \int_{ A_{k,t}} a(x) |\eta D  u |^2 \mathrm{d}x \\
 \leq &\displaystyle   2\beta \int_{A_{k,t}} \eta |D {u}| |D \eta| (u-k)\mathrm{d}x + \int_{A_{k,t}}\eta^{2}|F| |D  u| \mathrm{d}x \\
 &\displaystyle  +2\int_{ A_{k,t}}\eta |F||D  \eta| (u-k)\mathrm{d}x.
\end{array}
\end{equation}
Let
$$
\frac {2n}{n+1}< \delta < \frac {2n+1}{n+1} <2
$$
be fixed. Using Young inequality with exponents $\frac{2}{\delta}$ and $\frac{2}{2-\delta}$ and (\ref{2.40}) one has
\begin{equation*}\label{2.36}
\begin{array}{llll}
&\displaystyle \int_{ A_{k,t}} |\eta D  u|^\delta \mathrm{d}x \\
= &\displaystyle \int_{A_{k,t}} |\eta D  u |^\delta  a(x)^{\frac{\delta}{2}} \Big( \frac{1}{a(x)}\Big )^{\frac{\delta}{2}}\mathrm{d}x\\
\leq &\displaystyle  c\Big( \int_{A_{k,t}} a(x) |\eta D  u| ^2\mathrm{d}x +\int_{A_{k,t}}\Big(\frac{1}{ a(x)}\Big)^{\frac{\delta}{2-\delta}}\mathrm{d}x\Big)
\end{array}
\end{equation*}
\begin{equation}\label{2.36}
\begin{array}{llll}
\leq &\displaystyle  c\Big( \int_{A_{k,t}} \eta  |D {u}||D \eta| (u-k)\mathrm{d}x + \int_{A_{k,t}}\eta^{2} |F||D  u| \mathrm{d}x\\
&\displaystyle  +\int_{ A_{k,t}}\eta |F||D  \eta| (u-k)\mathrm{d}x+\int_{A_{k,t}}\Big(\frac{1}{\alpha(x)}\Big)^{\frac{\delta}{2-\delta}}\mathrm{d}x \Big) \\
:= &\displaystyle c(I_1+I_2+I_3+I_4) .
\end{array}
\end{equation}
Our next goal is to estimate each term in the right hand side of (\ref{2.36}). We use Young inequality with exponent $\delta $ and $\delta '$ in order to estimate
\begin{equation}\label{3.2.2}
\begin{split}
I_1 =& \int_{A_{k,t}} \eta |D {u}| |D \eta| (u-k)\mathrm{d}x\\
\leq & \varepsilon\int_{ A_{k,t}} |\eta D  u|^\delta \mathrm{d}x +c(\varepsilon)\int_{ A_{k,t}}(|D \eta| (u-k))^{{\delta}'} \mathrm{d}x \\
\leq&  \varepsilon\int_{ A_{k,t}} |\eta D  u| ^\delta \mathrm{d}x + c(\varepsilon) \int_{A_{k,t}}\Big (\frac{u-k}{t-s}\Big )^{{\delta}'}\mathrm{d}x.
\end{split}
\end{equation}
We notice that
\begin{equation}\label{delta}
\frac {2n}{n+1} <\delta \Longleftrightarrow \delta '<\frac {2n}{n-1}
\end{equation}
which together with fact $m>\frac {n(n+1)}{n-1}$ in (\ref{F2}) implies $\delta ' <m$.

We use Young inequality again with exponents $\delta$ and $\delta '$ and H\"older inequality with exponents $\frac{m}{{\delta}'}$ and $\frac{m}{{m-\delta}'}$ to derive
\begin{equation}\label{3.2.3}
\begin{split}
I_2 = & \int_{A_{k,t}}\eta^{2}|F| |D  u| \mathrm{d}x\\
\leq &\varepsilon\int_{ A_{k,t}} |\eta D  u|^\delta \mathrm{d}x +c(\varepsilon)\int_{ A_{k,t}}(\eta|F| )^{{\delta}'} \mathrm{d}x \\
\leq &\varepsilon\int_{ A_{k,t}} |\eta D  u|^\delta \mathrm{d}x +c(\varepsilon)\int_{ A_{k,t}}|F|^{{\delta}'} \mathrm{d}x \\
\leq & \varepsilon\int_{ A_{k,t}} |\eta D  u| ^\delta \mathrm{d}x +c(\varepsilon) \|F\|^{{\delta}'}_{L^{m}(B_{R_0})}|A_{k,t}|^{1-\frac{{{\delta}'}}{m}}.
\end{split}
\end{equation}

Similarly, we use Young inequality again with exponents $\delta$ and $\delta '$ and H\"older inequality with exponents $\frac{m}{{\delta}}$ and $\frac{m}{{m-\delta}}$ (note that $\delta <2\le n<m$),
\begin{equation}\label{3.2.4}
\begin{split}
I_3= & \int_{ A_{k,t}}\eta |F|| D  \eta| (u-k)\mathrm{d}x\\
\leq & c \int_{ A_{k,t}}(| D \eta| (u-k))^{{\delta}'} \mathrm{d}x +c \int_{A_{k,t}}(|F| \eta)^{\delta} \mathrm{d}x \\
\leq & c \int_{A_{k,t}}\Big (\frac{u-k}{t-s}\Big )^{{\delta}'}\mathrm{d}x +c \int_{A_{k,t}}|F|^{\delta} \mathrm{d}x \\
\leq & c \int_{A_{k,t}}\Big (\frac{u-k}{t-s}\Big )^{{\delta}'}\mathrm{d}x +c\|F\|^{\delta}_{L^{m}(B_{R_0})}|A_{k,t}|^{1-\frac{{\delta}}{m}}.
\end{split}
\end{equation}

The assumption $\delta <\frac {2n+1}{n+1}$ together with $r>n(n+1)$ and $n\ge 2$ implies $r(2-\delta) >\delta$.
Using H\"older inequality with exponents $\frac{r(2-\delta)}{\delta} $ and $\frac{r(2-\delta)}{r(2-\delta)-\delta} $,
 \begin{equation}\label{3.2.5}
\begin{split}
I_4= & \int_{A_{k,t}}\Big(\frac{1}{\alpha(x)}\Big)^{\frac{\delta}{2-\delta}}\mathrm{d}x \\
 \leq & c \Big(\int_{A_{k,t}}\Big( \frac{1}{\alpha(x)} \Big)^{r}\mathrm{d}x\Big)^{\frac{\delta}{(2-\delta)r}}\Big(\int_{A_{k,t}}1\mathrm{d}x\Big)^{1-\frac{\delta}{(2-\delta)r}}\\
 \leq & c \Big\|\frac{1}{\alpha(x)}\Big\|^{\frac{\delta}{2-\delta}}_{L^{r}(B_{R_0})}|A_{k,t}|^{1-\frac{{\delta}}{(2-\delta)r}}.
\end{split}
\end{equation}
Substituting (\ref{3.2.2}), (\ref{3.2.3}), (\ref{3.2.4}) and (\ref{3.2.5}) into (\ref{2.36}), one has
\begin{equation*}
\begin{split}
& \int_{ A_{k,t}} |\eta D  u |^\delta \mathrm{d}x \\
\leq & 2\varepsilon\int_{ A_{k,t}} |\eta D  u| ^\delta \mathrm{d}x + c\int_{A_{k,t}}\Big (\frac{u-k}{t-s}\Big )^{{\delta}'}\mathrm{d}x+c\|F\|^{{\delta}'}_{L^{m}(\Omega)}|A_{k,t}|^{1-\frac{{{\delta}'}}{m}}\\
& +c\|F\|^{\delta}_{L^{m}(\Omega)}|A_{k,t}|^{1-\frac{{\delta}}{m}}+
\Big\|\frac{1}{\alpha(x)}\Big\|^{\frac{\delta}{2-\delta}}_{L^{r}(B_{R_0})} |A_{k,t}|^{1-\frac{{\delta}}{(2-\delta)r}}.
\end{split}
\end{equation*}
Since $\delta <2<\delta '$, then $1-\frac {\delta '}{m} <1-\frac {\delta}{m}$, thus
$$
|A_k|^{1-\frac {\delta}{m}} =|A_k|^{\frac {\delta '-\delta }{m}} |A_k|^{1-\frac {\delta '}{m}} \le |\Omega|^{\frac {\delta '-\delta }{m}} |A_k|^{1-\frac {\delta '}{m}}.
$$
We use this fact, and take $\varepsilon =\frac 1 4$, then
\begin{equation*}
\begin{split}
& \int_{ A_{k,s}} | D  u| ^\delta \mathrm{d}x \leq \int_{ A_{k,t}} |\eta D  u| ^\delta \mathrm{d}x \\
 \leq& c \left( \int_{A_{k,t}}\Big (\frac{u-k}{t-s}\Big )^{{\delta}'}\mathrm{d}x+ |A_{k,t}|^{1-\frac{{{\delta}'}}{m}}
  + |A_{k,t}|^{1-\frac{{\delta}}{m}}+|A_{k,t}|^{1-\frac{{\delta}}{(2-\delta)r}} \right)\\
\leq & c \left( \int_{A_{k,t}}\Big (\frac{u-k}{t-s}\Big )^{{\delta}'} \mathrm{d}x+ |A_{k,t}|^{1-\frac{{{\delta}'}}{m}}
 + |A_{k,t}|^{1-\frac{{\delta}}{(2-\delta)r}} \right).
\end{split}
\end{equation*}
Let $\tilde{\theta}=\min \left\{1-\frac{{{\delta}'}}{m},1-\frac{{\delta}}{(2-\delta)r}\right\}$. (\ref{condition for a(x)}) and (\ref{F2}) together with the condition on $\delta$ in (\ref{delta}) ensure $\tilde \theta >1-\frac \delta n$. The condition $\delta >\frac {2n}{n+1}$ implies
$Q=\delta ' <\delta ^*$. The above inequality has the form
$$
\int_{A_{k,s}} |Du|^\delta  \mathrm{d}x \le c \left[\int_{A_{k,t}} \left( \frac {u-k}{t-s} \right) ^Q +|A_{k,s}| ^{1-\frac \delta n +\varepsilon}\right]
$$
with $s<t$, $\delta < Q< \delta ^*$ and $\varepsilon >0$, thus $u\in GDG _\delta ^+ $.

In order to prove that $u\in GDG _\delta ^- $ it suffices that $\tilde u = -u\in GDG _\delta ^+ $.  We note that $\tilde u $ satisfies
$$
-\mbox {div} (a(x)  D  \tilde u ) =-\mbox {div} \tilde F,
$$
with $\tilde F =-F$ and
$$
\tilde F \in (L_{loc} ^2 (\Omega))^n.
$$
Reasoning as above, one can derive that $-u\in GDG _\delta ^+ $, which together with $u\in GDG _\delta ^+ $ implies $u\in GDG _\delta $.
 Theorem \ref{theorem 21} gives the result.
\end{proof}

\section{A nonlinear elliptic equation.}
This section gives an application of Theorem \ref{theorem 21} to regularity property of weak solutions of nonlinear elliptic equations of the form
\begin{equation}\label{nonlinear equation}
-{\rm div} {\cal A}(x,u(x), D  u(x))=f(x), \quad {\rm in }\  \Omega,
\end{equation}
here $\Omega$ stands for an open bounded subset of $\mathbb R^n$, $n\geq 2$. We assume that there exists positive constants $\alpha ,\beta$, and
$$
1<p\le n,\  \frac {np}{n+1}<\bar p \le p,
$$
such that for all $\xi \in \mathbb R^n$,
\begin{equation}\label{elliptic condition}
{\cal A} (x,s,\xi) \xi \ge \alpha |\xi| ^{\bar p }
\end{equation}
and
\begin{equation}\label{contrallable growth condition}
|{\cal A} (x,s,\xi)|\le \beta |\xi| ^{p-1}.
\end{equation}
As far as the function $f$ in (\ref{nonlinear equation}) is concerned, we assume
\begin{equation}\label{condition for f 22}
f\in L_{loc}^m (\Omega), \ m >\frac n {p-1}.
\end{equation}

\begin{definition}

A function $u\in W_{loc} ^{1,p} (\Omega)$ is said to be a (weak) solution to (\ref{nonlinear equation}) if
\begin{equation}\label{definitioin 5.1}
\int_\Omega {\cal A} (x,u, D  u)  D  \varphi {\rm d}x =\int_\Omega f\varphi {\rm d}x
\end{equation}
for all $\varphi \in W^{1,p}(\Omega)$ with compact support.
\end{definition}

We note that in (\ref{contrallable growth condition}), the growth of $|{\cal A}(x,s,\xi)|$ is controlled by $|\xi|^{p-1}$, while in (\ref{elliptic condition}), ${\cal A}(x,s,\xi)$ is coercive with $|\xi| ^{\bar p }$, where $\bar p $ may be smaller than $p$ but greater than $\frac {np}{n+1}$. If $\bar p =p$ then we are in the natural growth conditions. But if $\bar p <p$ then we are in the sense of general growth conditions or nonstandard conditions. For some results related to integral functionals with nonstandard growth conditions, we refer to Marcellini \cite{Marcellini1,Marcellini2,Marcellini3,Marcellini4,Marcellini5,Marcellini6,Marcellini7}, Bogelein-Duzaar-Marcellini \cite{Bogelein-Duzaar-Marcellini} and Esposito-Leonetti-Mingione \cite{Esposito-Leonetti-Mingione} and the references therein.

We prove the following

\begin{theorem}\label{theorem 5.1}
Assume (\ref{elliptic condition}), (\ref{contrallable growth condition}) and (\ref{condition for f 22}). Then all weak solutions $u\in W_{loc} ^{1,p} (\Omega)$ to (\ref{nonlinear equation}) are locally bounded and locally H\"older continuous.
\end{theorem}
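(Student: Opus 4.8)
The plan is to show that a weak solution $u$ belongs to the generalized De Giorgi class $GDG_{\bar p}(\Omega,\bar p,Q,y,y_*,\varepsilon,\kappa_0)$ for a suitable $Q\in[\bar p,\bar p^{*})$, and then to conclude by Theorem \ref{theorem 21}. The crucial arithmetic is the following: since $p\le n$ we have $(p-1)(n+1)\le np$, hence $p-1\le\frac{np}{n+1}<\bar p$, so the number $Q_0:=\frac{\bar p}{\bar p-p+1}$ is well defined and positive, it satisfies $Q_0\ge\bar p$ (because $0<\bar p-p+1\le1$), and --- this is the point --- the hypothesis $\bar p>\frac{np}{n+1}$ is exactly equivalent to $Q_0<\bar p^{*}$.

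First I would fix $B_{R_0}(x_0)\Subset\Omega$ with $R_0<1$, take $0<\rho<R\le R_0$, $k\in\mathbb R$, and a cut-off $\eta\in C_0^\infty(B_R)$ with $0\le\eta\le1$, $\eta\equiv1$ on $B_\rho$, $|D\eta|\le\frac{2}{R-\rho}$. Testing (\ref{definitioin 5.1}) with $\varphi=\eta^{\mu}(u-k)_+$, where $\mu$ is a fixed integer $\ge Q_0$, and denoting $A_{k,t}=\{x\in B_t(x_0):u(x)>k\}$, the coercivity (\ref{elliptic condition}) applied to the principal part and the growth bound (\ref{contrallable growth condition}) applied to the term carrying $D\eta$ give
$$
\alpha\int_{A_{k,R}}\eta^{\mu}|Du|^{\bar p}\,dx\le\mu\beta\int_{A_{k,R}}\eta^{\mu-1}(u-k)|D\eta|\,|Du|^{p-1}\,dx+\int_{A_{k,R}}\eta^{\mu}|f|\,(u-k)\,dx .
$$
For the first term on the right, since $p-1<\bar p$ I would use Young's inequality with the conjugate exponents $\frac{\bar p}{p-1}$ and $\frac{\bar p}{\bar p-p+1}=Q_0$, together with $|D\eta|\le\frac{2}{R-\rho}$ and $0\le\eta\le1$ (and $\mu\ge Q_0$, so that the residual power of $\eta$ is nonnegative), obtaining $\eta^{\mu-1}(u-k)|D\eta|\,|Du|^{p-1}\le\varepsilon\,\eta^{\mu}|Du|^{\bar p}+c(\varepsilon)\big(\tfrac{u-k}{R-\rho}\big)^{Q_0}$, and the $\varepsilon$-term gets absorbed on the left. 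For the second term, the conjugate exponent of $Q_0$ is $Q_0'=\frac{\bar p}{p-1}$, which is $<m$ because $\bar p\le p\le n<(p-1)m$; so Young and H\"older yield $\int_{A_{k,R}}\eta^{\mu}|f|(u-k)\le\varepsilon\int_{A_{k,R}}(u-k)^{Q_0}+c\|f\|_{L^{m}(B_{R_0})}^{Q_0'}|A_{k,R}|^{1-\frac{Q_0'}{m}}$, and here the hypothesis $m>\frac{n}{p-1}$ is precisely what gives $1-\frac{Q_0'}{m}=1-\frac{\bar p}{(p-1)m}>1-\frac{\bar p}{n}$, i.e.\ this exponent equals $1-\frac{\bar p}{n}+\varepsilon_0$ with $\varepsilon_0:=\bar p\big(\frac1n-\frac1{(p-1)m}\big)\in(0,\tfrac{\bar p}{n})$.

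Collecting the estimates, absorbing the gradient term, bounding the bare $\int(u-k)^{Q_0}$ by $\int\big(\frac{u-k}{R-\rho}\big)^{Q_0}$ (using $R-\rho<1$), and writing $|A_{k,R}|\le|\Omega|^{\frac{\bar p}{n}-\varepsilon_0}|A_{k,R}|^{1-\frac{\bar p}{n}+\varepsilon_0}$ as is done repeatedly in the paper, I would arrive at
$$
\int_{A_{k,\rho}}|Du|^{\bar p}\,dx\le c\int_{A_{k,R}}\Big(\frac{u-k}{R-\rho}\Big)^{Q_0}\,dx+c\,|A_{k,R}|^{1-\frac{\bar p}{n}+\varepsilon_0},
$$
which is exactly the defining inequality (\ref{eDG+}) of $GDG_{\bar p}^{+}$ with $Q=Q_0\in[\bar p,\bar p^{*})$ (the interchange between the $(1-\sigma)\rho$ and $\rho-r$ normalizations is routine, as in the proof of Theorem \ref{main theorem 2}). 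Applying the same argument to $-u$, which is a weak solution of the same type of equation with $\tilde{\mathcal A}(x,s,\xi):=-\mathcal A(x,-s,-\xi)$ (still satisfying (\ref{elliptic condition}) and (\ref{contrallable growth condition})) and right-hand side $-f\in L^{m}_{loc}(\Omega)$, I get $-u\in GDG_{\bar p}^{+}$, hence $u\in GDG_{\bar p}^{-}$. Therefore $u\in GDG_{\bar p}(\Omega,\bar p,Q_0,c,c,\varepsilon_0,0)$ in the sense of Definition \ref{definition 2}, and Theorem \ref{theorem 21} finishes the proof.

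The main difficulty is not analytic but a matter of getting the exponent bookkeeping exactly right: one must identify $Q_0=\frac{\bar p}{\bar p-p+1}$ as the correct power that appears on the right-hand side, and then check the two sharp facts that the structural hypotheses are designed for --- that $\bar p>\frac{np}{n+1}$ is equivalent to $Q_0<\bar p^{*}$ (so the Caccioppoli-type inequality really does land in a \emph{generalized} De Giorgi class rather than the classical one), and that $m>\frac{n}{p-1}$ is equivalent to $1-\frac{\bar p}{(p-1)m}>1-\frac{\bar p}{n}$ (so that the excess exponent $\varepsilon_0$ is strictly positive). The absorption step relies on $p-1<\bar p$, which in turn is exactly what $p\le n$ provides.
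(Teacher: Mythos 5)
Your proposal is correct, and it follows the paper's overall strategy exactly: derive a Caccioppoli-type inequality on the superlevel sets with the right-hand power $Q_0=\frac{\bar p}{\bar p-p+1}$, check that $\bar p>\frac{np}{n+1}$ gives $Q_0<\bar p^{*}$ and that $m>\frac{n}{p-1}$ gives the measure exponent $1-\frac{\bar p}{(p-1)m}>1-\frac{\bar p}{n}$, conclude $u\in GDG_{\bar p}$, and invoke Theorem \ref{theorem 21}; the passage to $GDG^-_{\bar p}$ via $-u$ and $\tilde{\mathcal A}(x,s,\xi)=-\mathcal A(x,-s,-\xi)$ is also exactly what the paper (tacitly) does. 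The one genuine difference is technical: the paper tests with $\varphi=\eta(u-k)_+$, so after Young's inequality the gradient term on the right appears as $\varepsilon\int_{A_{k,t}}|Du|^{\bar p}\,dx$ over the larger set, without the cut-off weight, and cannot be absorbed; the paper then removes it with the iteration Lemma \ref{lemma 5.1} (Giusti's Lemma 6.1), applied to $s\mapsto\int_{A_{k,s}}|Du|^{\bar p}\,dx$. You instead test with $\eta^{\mu}(u-k)_+$, $\mu\ge Q_0$, which keeps the same weight $\eta^{\mu}$ on both sides of the estimate (your computation that the residual power of $\eta$ after Young is $\mu-Q_0\ge0$ is the point that makes this legitimate), so the $\varepsilon$-term is absorbed directly and the hole-filling lemma is not needed. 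Both devices are standard; yours makes the proof slightly more self-contained, while the paper's keeps the test function minimal at the cost of invoking one extra lemma. The remaining bookkeeping (finiteness of $\int_{B_R}|Du|^{\bar p}$ for absorption since $\bar p\le p$, $Q_0'=\frac{\bar p}{p-1}<m$, $R-\rho<1$ to pass from $(u-k)^{Q_0}$ to $\big(\frac{u-k}{R-\rho}\big)^{Q_0}$) is handled correctly and matches the paper's.
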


\begin{proof}
For $B_{R_1} \Subset \Omega$, $0<s<t\le R_1 \le 1$, let us take $\eta \in C_0^\infty (B_{t})$ as follows
$$
0\le \eta \le 1, \ \eta \equiv 1 \mbox { in } B_s \mbox { and } | D  \eta | \le \frac 2 {t-s}.
$$
If we take
$$
\varphi =\eta (u-k)_+ \in W_0^{1,p} (B_t),
$$
then
$$
 D  \varphi =\left[ D  \eta (u-k) + \eta  D  u\right] 1_{A_{k,t}},
$$
where $A_{k,t} =\{u>k\}\cap B_t$. We use the above $\varphi$ as a test function in the weak formulation (\ref{definitioin 5.1}) and we have
$$
\int_{A_{k,t}} {\cal A} (x,u, D  u) \left[ D  \eta (u-k) + \eta  D  u\right] {\rm d}x =\int_{A_{k,t}} f \eta (u-k) {\rm d}x .
$$
We use the above inequality, (\ref{elliptic condition}), (\ref{contrallable growth condition}) and we derive
\begin{equation}\label{5.5}
\begin{array}{llll}
&\displaystyle \int_{A_{k,s}} | D  u| ^{\bar p } {\rm d}x \le \int_{A_{k,t}} \eta | D  u| ^{\bar p } {\rm d}x\\
\le &\displaystyle \int_{A_{k,t}} {\cal A} (x,u, D  u) \eta  D  u {\rm d}x \\
= &\displaystyle \int_{A_{k,t}} {\cal A} (x,u, D  u)  D  \eta (u-k) {\rm d}x +\int_{A_{k,t}}f \eta (u-k){\rm d}x\\
:=&\displaystyle  I_1 +I_2.
\end{array}
\end{equation}
Using (\ref{contrallable growth condition}) and Young inequality with exponents $\frac {\bar p }{p-1}$ and $\frac {\bar p }{\bar p -p+1}$, $|I_1|$ can be estimated as
\begin{equation}\label{estimate for I1}
\begin{array}{llll}
|I_1| &\le &\displaystyle \beta \int_{A_{k,t}} | D  u |^{p-1} | D  \eta | (u-k) {\rm d}x \\
&\le &\displaystyle \varepsilon \int_{A_{k,t}} | D  u| ^{\bar p } {\rm d}x + c(\varepsilon) \int_{A_{k,t}} \left(\frac {u-k}{t-s}\right) ^{\frac {\bar p }{ \bar p -p+1}} {\rm d}x .
\end{array}
\end{equation}
$|I_2|$ can be estimated by using Young inequality with exponents $\frac {\bar p }{p-1}$ and $\frac {\bar p }{\bar p -p+1}$ again as
\begin{equation}\label{estimate for I2}
\begin{array}{llll}
|I_2| &\le &\displaystyle \int_{A_{k,t}} |f|\eta (u-k) {\rm d}x \\
&\le &\displaystyle \int_{A_{k,t}} |f| ^{\frac {\bar p }{ p -1}} {\rm d}x + \int_{A_{k,t}} \left(u-k\right) ^{\frac {\bar p }{ \bar p -p+1}} {\rm d}x \\
&\le &\displaystyle \int_{A_{k,t}} |f| ^{\frac {\bar p }{p -1}} {\rm d}x + \int_{A_{k,t}} \left(\frac {u-k}{t-s}\right) ^{\frac {\bar p }{ \bar p -p+1}} {\rm d}x,
\end{array}
\end{equation}
where we have used the fact $s<t<R_1 \le 1$, which implies $1\le \frac 1 {t-s}$.

Substituting (\ref{estimate for I1}) and (\ref{estimate for I2}) into (\ref{5.5}) one has
\begin{equation}\label{5.555}
\begin{array}{llll}
&\displaystyle \int_{A_{k,s}} | D  u| ^{\bar p } {\rm d}x \\
\le &\displaystyle \varepsilon \int_{A_{k,t}} | D  u| ^{\bar p } {\rm d}x + (c(\varepsilon)+1) \int_{A_{k,t}} \left(\frac {u-k}{t-s}\right) ^{\frac {\bar p }{ \bar p -p+1}} {\rm d}x +\int_{A_{k,t}} |f| ^{\frac {\bar p }{p -1}} {\rm d}x.
\end{array}
\end{equation}
Now we want to eliminate the first term in the right hand side including $| D  u|^{\bar p }$.
We use a very useful lemma for real functions which can be found, for example, in \cite{Giusti} Lemma 6.1 or \cite{Giaquinta} Lemma 3.1.

\begin{lemma}\label{lemma 5.1}
Let $f(\tau)$ be a non-negative bounded function defined for $0\le R_0\le \tau \le R_1$. Suppose that for $R_0 \le s <t\le R_1$ we have
$$
f(s) \le A (t-s) ^{-\alpha} +B +\varepsilon  f(t),
$$
where $A,B,\alpha, \varepsilon $ are non-negative constants, and $\varepsilon  <1$. Then there exists a constant $c$, depending only on $\alpha$ and $\varepsilon  $, such that for every $\rho, R$, $R_0\le \rho <R\le R_1$ we have
$$
f(\rho ) \le c [A(R-\rho ) ^{-\alpha } +B].
$$
\end{lemma}

We let $\rho, R$ fixed with $\rho <R\le R_1$. Thus, from (\ref{5.555}) we deduce for every $s,t$ such that $\rho \le s<t\le R$, it results
\begin{equation}\label{5.5552}
\begin{array}{llll}
&\displaystyle \int_{A_{k,s}} | D  u| ^{\bar p } {\rm d}x \\
\le &\displaystyle \varepsilon \int_{A_{k,t}} | D  u| ^{\bar p } {\rm d}x + (c(\varepsilon)+1) \int_{A_{k,R}} \left(\frac {u-k}{t-s}\right) ^{\frac {\bar p }{ \bar p -p+1}} {\rm d}x +\int_{A_{k,R}} |f| ^{\frac {\bar p }{p -1}} {\rm d}x.
\end{array}
\end{equation}
Applying Lemma \ref{lemma 5.1} in (\ref{5.5552}) we conclude that

\begin{equation}\label{5.99}
\int_{A_{k,\rho}} | D  u| ^{\bar p } {\rm d}x \le c \int_{A_{k,R}} \left(\frac {u-k}{R-\rho}\right) ^{\frac {\bar p }{ \bar p -p+1}} {\rm d}x +\int_{A_{k,R}} |f| ^{\frac {\bar p }{p -1}} {\rm d}x.
\end{equation}
Since $\frac {np}{n+1}<\bar p$, then $Q=\frac {\bar p }{ \bar p -p+1} <\bar p ^*$. The condition $m>\frac n {p-1}$ in (\ref{condition for f 22}) and $\bar p \le n$ imply $\frac {m(p-1)}{\bar p}>1$. H\"older inequality with exponents $\frac {m(p-1)}{\bar p}$ and $\frac {m(p-1)}{m(p-1)-\bar p}$ yields
$$
\int_{A_{k,R}} |f| ^{\frac {\bar p }{p -1}} {\rm d}x \le \left(\int_\Omega |f| ^m {\rm d}x \right) ^{\frac {\bar p }{m (p -1)}} |A_{k,R}| ^{1-\frac {\bar p }{m (p -1)}} .
$$
We use the condition (\ref{condition for f 22}) for $f$ again and we derive $1-\frac {\bar p }{m (p -1)} >1-\frac {\bar p }{n}$. Therefore (\ref{5.99}) has the form
\begin{equation}\label{5.9}
\int_{A_{k,\rho}} | D  u| ^{\bar p } {\rm d}x \le c \int_{A_{k,t}} \left(\frac {u-k}{R-\rho}\right) ^{Q} {\rm d}x +c_*  |A_{k,R}| ^{1-\frac {\bar p }{n} +\varepsilon},
\end{equation}
with $Q=\frac {\bar p }{ \bar p -p+1} <\bar p^*$ and $\varepsilon = \frac {\bar p }{n} -\frac {\bar p }{m(p -1)} >0$. (\ref{5.9}) tells us that $u\in GDG ^+_{\bar p }$. Similarly, one can derive that $u\in GDG ^-_{\bar p }$. The result of Theorem \ref{theorem 5.1} follows from Theorem \ref{theorem 21}.
\end{proof}

\section{A quasilinear elliptic system.}
This section deals with quasilinear elliptic systems in divergence form
\begin{equation}\label{6.1}
\begin{cases}
-{\rm div} (a(x, u(x))Du(x))=-{\rm div} F, & {\rm in }\  \Omega, \\
u(x)=0,  & {\rm on }\  \partial \Omega.
\end{cases}
\end{equation}
where $\Omega $ is a bounded open subset of $\mathbb R^n$, $n\ge 3$, $u: \Omega \subset \mathbb{R}^{n} \rightarrow \mathbb{R}^{N}$, $F:\Omega \rightarrow \mathbb R^{Nn} $ and $a: \Omega \times \mathbb{R}^{N} \rightarrow \mathbb{R}^{N^{2} n^{2}} $ is matrix valued with components  $a_{i, j}^{\alpha, \beta}(x, y) $ where $ \alpha, \beta \in\{1,  \cdots , N\}$ and $ i, j \in\{1, \cdots , n\} $.
We stress that the first line in (\ref{6.1}) consists of s system of $N$ equations of the form
\begin{equation}\label{quasilinear elliptic systems}
-\sum_{i=1}^{n} \frac{\partial}{\partial x_{i}}\left(\sum_{\beta=1}^{N} \sum_{j=1}^{n} a_{i, j}^{\alpha, \beta}(x, u) \frac{\partial}{\partial x_{j}} u^{\beta}\right)= -\sum_{i=1}^{n} \frac{\partial}{\partial x_{i}} F^{\alpha}_{i}, \ \ \   \alpha=1, \cdots, N.
\end{equation}

When $N=1$, that is in the case of one single equation, the celebrated De Giorgi-Nash-Moser theorem ensures that weak solutions $u \in W_0^{1,2}(\Omega)$ are locally bounded and locally H\"older continuous, see section 2.1 in \cite{Mingione}. But in the vectorial case $N \geq 2$, the aforementioned result is no longer true due to the De Giorgi counterexample, see section 3 in \cite{Mingione}, see also \cite{DE,Mooney-Savin,PodioGuidugli,Leonardi}.
So it arises the question of finding additional structural restrictions on the coefficients $ a_{i, j}^{\alpha, \beta} $ that keep away De Giorgi counterexample and allow for local boundedness and local H\"older continuity of weak solutions $u$ to (\ref{6.1}).

In the present work we assume, except for ellipticity of all the coefficients, a condition on the support of off-diagonal coefficients: there exists $ L_{0} \in   [0,+\infty) $ such that for all $L \geq L_{0} $, when $ \alpha \neq \beta $, the following (${\cal A}_3$) holds.
Under such a restriction we are able to prove local boundedness and local H\"older continuity of weak solutions. It is worth to stress out that systems with special structure have been studied in \cite{Yan,Meier} and off-diagonal coefficients with a particular support have been successfully used when proving maximum principles in \cite{Leonardi-Leonetti-Pignotti}, $ L^{\infty} $ -regularity in \cite{Leonardi-Leonetti-Pignotti2}, when obtaining existence for measure data problems in \cite{Leonetti-Rocha-Staicu,Leonetti-Rocha-Staicu2}, and for the degenerate case, in \cite{Geronimo-Leonetti-Macri}. For some other related developments, we refer to \cite{GDHR,GHDR}.

We now list our structural assumptions on the coefficients $a^{\alpha,\beta} _{i,j} (x,y): \Omega \times \mathbb{R}^{N} \rightarrow \mathbb{R} $: for all $ \alpha, \beta \in\{1, \cdots, N\} $ and all $ i, j \in\{1, \cdots, n\} $, we require that $ a_{i, j}^{\alpha, \beta} (x,y)$ satisfies the following conditions:

$\left(\mathcal{A}_{0}\right)$  $a_{i, j}^{\alpha, \beta}(x, y)$ is a Carath\'eodory function, that is, $ x \mapsto a_{i, j}^{\alpha, \beta}(x, y) $ is measurable and $ y \mapsto a_{i, j}^{\alpha, \beta}(x, y) $ is continuous;

$\left(\mathcal{A}_{1}\right)$  (boundedness of all the coefficients) there exists a positive constant $ c>0 $ such that
$$
\left|a_{i, j}^{\alpha, \beta}(x, y)\right| \leq c
$$
for almost all $ x \in \Omega $ and for all $ y \in \mathbb{R}^{N}$;

$\left(\mathcal{A}_{2}\right)$ (ellipticity of all the coefficients) there exists a positive constant $\nu>0 $ such that
$$
\sum_{\alpha, \beta=1}^{N} \sum_{i, j=1}^{n} a_{i, j}^{\alpha, \beta}(x, y) \xi_{i}^{\alpha} \xi_{j}^{\beta} \geq \nu|\xi|^{2}
$$
for almost all $ x \in \Omega $, for all $ y \in \mathbb{R}^{N} $ and for all $ \xi \in \mathbb{R}^{N \times n} $;

$\left(\mathcal{A}_{3}\right)$ (support of off-diagonal coefficients) there exists $ L_{0} \in(0,+\infty) $ such that $ \forall L \geq L_{0} $, when $ \alpha \neq \beta $,
$$
a_{i, j}^{\alpha, \beta}(x, y) \neq 0, y^{\alpha}>L \Rightarrow y^{\beta}>L, \eqno(\mathcal{A}_{3}')
$$
and
$$
a_{i, j}^{\alpha, \beta}(x, y) \neq 0, y^{\alpha}<-L \Rightarrow y^{\beta}<-L. \eqno(\mathcal{A}_{3}'')
$$

We say that a function $ u: \Omega \rightarrow \mathbb{R}^{N} $ is a weak solution of the system (\ref{quasilinear elliptic systems}) if $ u \in W_0 ^{1,2}\left(\Omega, \mathbb{R}^{N}\right) $ and
\begin{equation}\label{6.3}
\int_{\Omega} \sum_{\alpha, \beta=1}^{N} \sum_{i, j=1}^{n} a_{i, j}^{\alpha, \beta}(x, u(x)) D_{j} u^{\beta}(x) D_{i} \varphi^{\alpha}(x) {\rm d} x= \int_{\Omega}\sum_{\alpha=1}^{N}
\sum_{i=1}^{n}F^{\alpha}_{i}D_{i} \varphi^{\alpha}(x) {\rm d}x
\end{equation}
for all $ \varphi \in W_{0}^{1,2}\left(\Omega, \mathbb{R}^{N}\right) $.

\vspace{2mm}

We prove the following

\begin{theorem}\label{theorem 6.1}
 Let $ u \in W_0 ^{1,2}\left(\Omega, \mathbb{R}^{N}\right) $ be a weak solution of system (\ref{quasilinear elliptic systems}) under the assumptions $(\mathcal{A})_0 -(\mathcal{A})_3$.
Suppose $F \in L_{loc}^{m}(\Omega, \mathbb R^{Nn})$, $m>n$. Then $ u $ are locally bounded and locally H\"older continuous.
\end{theorem}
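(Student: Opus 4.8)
The plan is to show that the weak solution $u$ belongs to the vectorial generalized De Giorgi class $GDG_2(N,\Omega,2,2,y,y_*,\varepsilon,L_0)$ of Definition \ref{definition 2.2}, and then to invoke Theorem \ref{theorem 212}. First I would fix a ball $B_{R_1}\Subset\Omega$ with $|B_{R_1}|<1$, take $0<s<t\le R_1$, and choose $\eta\in C_0^\infty(B_t)$ with $0\le\eta\le1$, $\eta\equiv1$ on $B_s$ and $|D\eta|\le 2/(t-s)$. For $k\ge L_0$ put $A^\alpha_{k,\rho}=\{u^\alpha>k\}\cap B_\rho$ and test (\ref{6.3}) with $\varphi=(\varphi^1,\dots,\varphi^N)$, $\varphi^\alpha=\eta^2(u^\alpha-k)_+$, so that $D_i\varphi^\alpha=\eta^2D_i(u^\alpha-k)_++2\eta\,D_i\eta\,(u^\alpha-k)_+$.

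The decisive point is that $(\mathcal A_3')$ neutralizes the off-diagonal coefficients. Since $(u^\alpha-k)_+$ and $D_i(u^\alpha-k)_+$ are supported in $\{u^\alpha>k\}$ and $k\ge L_0$, condition $(\mathcal A_3')$ forces $a_{i,j}^{\alpha,\beta}(x,u)=0$ a.e. on $\{u^\alpha>k\}\cap\{u^\beta\le k\}$ when $\alpha\ne\beta$; hence on the support of every occurring factor we may replace $D_j u^\beta$ by $D_j(u^\beta-k)_+$, while $D_i u^\alpha\,\mathbf{1}_{\{u^\alpha>k\}}$ equals $D_i(u^\alpha-k)_+$ a.e. Applying the ellipticity $(\mathcal A_2)$ with $\xi_i^\alpha=\eta\,D_i(u^\alpha-k)_+$ then yields
\[
\nu\sum_{\alpha=1}^N\int_\Omega\eta^2|D(u^\alpha-k)_+|^2\,dx\ \le\ \sum_{\alpha,\beta,i,j}\int_\Omega a_{i,j}^{\alpha,\beta}(x,u)\,\xi_i^\alpha\,\xi_j^\beta\,dx\ =\ \mathrm I,
\]
where $\mathrm I$ is the $\eta^2D_i(u^\alpha-k)_+$-part of the bilinear form in (\ref{6.3}). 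The complementary part of the left-hand side (the $2\eta D_i\eta(u^\alpha-k)_+$ terms) and the two terms coming from $F$ would then be estimated, again after replacing $D_j u^\beta$ by $D_j(u^\beta-k)_+$ where needed, using $(\mathcal A_1)$, Cauchy--Schwarz and Young's inequality; each such term carries a factor $|D(u-k)_+|$ or $(u-k)_+$ supported on $\bigcup_\alpha A^\alpha_{k,t}$, and the resulting $\varepsilon\int\eta^2|D(u-k)_+|^2$ contributions get absorbed into $\mathrm I$.

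After absorption, bounding the $F$-integrals by Hölder's inequality with $F\in L^m_{loc}$, $m>n$, and using $(\sum_\alpha a_\alpha)^\theta\le\sum_\alpha a_\alpha^\theta$ with $\theta=1-\tfrac2m\in(0,1)$, I expect to arrive, for all $0<s<t\le R_1$ and all $k\ge L_0$, at
\[
\sum_{\alpha=1}^N\int_{A^\alpha_{k,s}}|Du^\alpha|^2\,dx\ \le\ c\sum_{\alpha=1}^N\int_{A^\alpha_{k,t}}\Big(\frac{u^\alpha-k}{t-s}\Big)^2\,dx\ +\ c\sum_{\alpha=1}^N|A^\alpha_{k,t}|^{\,1-\frac2n+\varepsilon},
\]
with $\varepsilon=\tfrac2n-\tfrac2m>0$ and $c$ independent of $k$. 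Taking $\rho=t$, $\sigma\rho=s$ this is precisely (\ref{eDG+2}) with $p=Q=2$ (and $2\in[2,2^*)$ since $n\ge3$), so $u\in GDG_2^+(N,\Omega,2,2,c,c,\varepsilon,L_0)$. Since $-u$ solves a system of the same type, with coefficients $\tilde a_{i,j}^{\alpha,\beta}(x,y)=a_{i,j}^{\alpha,\beta}(x,-y)$ — which satisfy $(\mathcal A_0)$--$(\mathcal A_2)$ and, by $(\mathcal A_3'')$, also $(\mathcal A_3')$ — and right-hand side $-F\in L^m_{loc}$, the same argument gives $u\in GDG_2^-(N,\Omega,2,2,c,c,\varepsilon,L_0)$. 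Hence $u\in GDG_2(N,\Omega,2,2,c,c,\varepsilon,L_0)$, and Theorem \ref{theorem 212} yields that $u$ is locally bounded and locally Hölder continuous.

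The only step that is not a routine Caccioppoli computation is the exploitation of the support restriction $(\mathcal A_3)$: it is exactly what allows one to insert the missing truncations $\mathbf{1}_{\{u^\beta>k\}}$ so that $(\mathcal A_2)$ can be applied to the full coefficient matrix rather than to its diagonal block alone — without it De Giorgi's counterexample obstructs any such estimate. Everything else is the standard De Giorgi machinery, already packaged in Theorem \ref{theorem 212}.
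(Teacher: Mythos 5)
Your proposal is correct and follows essentially the same route as the paper: test the weak formulation with $\varphi^\alpha=\eta^2(u^\alpha-L)_+$ for levels $L\ge L_0$, use $(\mathcal{A}_3')$ to insert the truncation $1_{\{u^\beta>L\}}$ so that the full ellipticity $(\mathcal{A}_2)$ can be applied, absorb the $\varepsilon$-gradient terms, estimate the $F$-terms by Young and H\"older with $m>n$, obtain the level-set Caccioppoli inequality, treat $-u$ via $(\mathcal{A}_3'')$ for the minus class, and conclude with Theorem \ref{theorem 212}. The only cosmetic difference is that you keep the exponent $Q=2$ while the paper passes to some $Q\in(2,2^*)$; since Definition \ref{definition 2.2} allows $Q=p=2$, both versions place $u$ in the vectorial generalized De Giorgi class and yield the same conclusion.
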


\begin{proof}
 Let $ u \in W_0^{1,2}\left(\Omega, \mathbb{R}^{N}\right) $ be a weak solution of system (\ref{quasilinear elliptic systems}). Let $ \eta: \mathbb{R}^{n} \rightarrow \mathbb{R} $ be the standard cut-off function such that $ 0 \leq \eta \leq 1$, $\eta \in C_{0}^{1}\left(B\left(x_{0}, t\right)\right) $, with $ B\left(x_{0}, t\right) \subset \Omega $ and $ \eta=1 $ in $ B\left(x_{0}, s\right) $, $0<s<t$. Moreover, $ |D \eta| \leq 2 /(t-s) $ in $ \mathbb{R}^{n} $. For every level $ L \geq L_{0} $, let us consider the test function $ \varphi: \mathbb{R}^{n} \rightarrow \mathbb{R}^{N} $ with $ \varphi=\left(\varphi^{1}, \cdots, \varphi^{N}\right) $, where
$$
\varphi^{\alpha}(x):=\eta^{2}(x) \left(u^{\alpha}(x)-L\right)_{+}, \ \ \text { for } \alpha \in\{1, \cdots, N\},
$$
then for all $\alpha \in\{1, \cdots, N\}$  and $i \in\{1, \cdots, n\}$,
$$
D_{i} \varphi^{\alpha}= \left(\eta^{2}  D_{i} u^{\alpha}+2 \eta\left(D_{i} \eta\right) \left(u^{\alpha}-L\right) \right) 1_{\left\{u^{\alpha}>L\right\}},
$$
where $1_E (x)$ is the characteristic function of the set $E$, that is, $1_E (x) =1$ for $x\in E$ and $1_E (x)=0$ otherwise.
Using this test function in the weak formulation (\ref{6.3}) of system (\ref{quasilinear elliptic systems}), we have
\begin{equation*}
\begin{array}{llll}
& \displaystyle \int_\Omega \sum_{\alpha,\beta=1}^{N} \sum_{i,j=1}^{n} a^{\alpha,\beta}_{i,j}(x,u)D_{j}u^{\beta} \eta^{2} D_{i}u^{\alpha} 1 _{\left\{u^{\alpha}>L\right\}}  {\rm d}x  \\
 &+ \displaystyle\int_\Omega \sum_{\alpha,\beta=1}^{N} \sum_{i,j=1}^{n} a^{\alpha,\beta}_{i,j}(x,u)D_{j}u^{\beta} 2 \eta\left(D_{i} \eta\right)  1 _{\left\{u^{\alpha}>L\right\}}\left(u^{\alpha}-L\right)  {\rm d}x \\
= & \displaystyle\int_\Omega \sum_{\alpha=1}^{N} \sum_{i=1}^{n} F_{i}^{\alpha} \eta^{2} D_{i}u^{\alpha} 1 _{\left\{u^{\alpha}>L\right\}}  {\rm d}x \\
 & +  \displaystyle\int_\Omega \sum_{\alpha=1}^{N} \sum_{i=1}^{n} F_{i}^{\alpha} 2 \eta\left(D_{i} \eta\right)  1 _{\left\{u^{\alpha}>L\right\}}\left(u^{\alpha}-L\right)  {\rm d}x .
\end{array}
\end{equation*}
Now, the assumption $\left(\mathcal{A}_{3}^{\prime}\right)$ guarantees that
\begin{equation}\label{6.4}
a^{\alpha,\beta}_{i,j}(x,u(x))  1 _{\left\{u^{\alpha}>L\right\}}(x)=a^{\alpha,\beta}_{i,j}(x,u(x))  1 _{\left\{u^{\alpha}>L\right\}} (x) 1 _{\left\{u^{\beta}>L\right\}}(x),
\end{equation}
when $\alpha \neq \beta$ and $L\geq L_{0}$. It is worthwhile to note that (\ref{6.4}) holds true when $\alpha=\beta$ as well; then
\begin{equation}\label{6.5}
\begin{array}{llll}
& \displaystyle \int_\Omega \sum_{\alpha,\beta=1}^{N} \sum_{i,j=1}^{n} a^{\alpha,\beta}_{i,j}(x,u)D_{j}u^{\beta} 1 _{\left\{u^{\beta}>L\right\}} \eta^{2} D_{i}u^{\alpha} 1 _{\left\{u^{\alpha}>L\right\}}  {\rm d}x  \\
\leq & \displaystyle\int_\Omega \sum_{\alpha,\beta=1}^{N} \sum_{i,j=1}^{n} a^{\alpha,\beta}_{i,j}(x,u)D_{j}u^{\beta}  1 _{\left\{u^{\beta}>L\right\}} 2 \eta\left(D_{i} \eta\right) \left(u^{\alpha}-L\right) 1 _{\left\{u^{\alpha}>L\right\}}  {\rm d}x \\
 & + \displaystyle \int_\Omega \sum_{\alpha=1}^{N} \sum_{i=1}^{n} F_{i}^{\alpha} \eta^{2} (D_{i}u^{\alpha}) 1 _{\left\{u^{\alpha}>L\right\}}  {\rm d}x \\
 & +  \displaystyle  \int_\Omega \sum_{\alpha=1}^{N} \sum_{i=1}^{n} F_{i}^{\alpha} 2 \eta\left(D_{i} \eta\right)  1 _{\left\{u^{\alpha}>L\right\}}\left(u^{\alpha}-L\right)  {\rm d}x \\
:=& I_1+I_2+I_3.
\end{array}
\end{equation}
Now we use the ellipticity assumption $\left(\mathcal{A}_{2}\right)$ with $\xi_{i}^{\alpha}=D_{i}u^{\alpha} 1 _{\left\{u^{\alpha}>L\right\}}$, $\xi_{j}^{\beta}=D_{j}u^{\beta} 1 _{\left\{u^{\beta}>L\right\}}$ and we get
\begin{equation}\label{6.55}
\begin{array}{llll}
& \displaystyle
\nu \int_{\Omega} \eta^{2} \sum_{\alpha=1}^{N} \left|D u^{\alpha}\right|^{2} 1 _{\left\{u^{\alpha}>L\right\}} {\rm d} x \\
\leq & \displaystyle \int_{\Omega} \sum_{\alpha, \beta=1}^{N} \sum_{i, j=1}^{n} a_{i, j}^{\alpha, \beta}  D_{j} u^{\beta} 1 _{\left\{u^{\beta}>L\right\}} \eta^{2}  D_{i} u^{\alpha} 1 _{\left\{u^{\alpha}>L\right\}}  {\rm d}x .
\end{array}
\end{equation}

Our next task is to estimate each term in the right hand side of (\ref{6.5}).

We use the inequality
\begin{equation}\label{basic inequality}
\left(\sum_{\alpha=1}^{N}a_{\alpha}\right)^{2} \leq 2 ^ {N-1} \sum_{\alpha=1}^{N}a_\alpha^2,
\end{equation}
the boundedness of all the coefficients assumption $\left(\mathcal{A}_{1}\right)$ and Young inequality in order to estimate
\begin{equation}\label{6.6}
\begin{array}{llll}
I_1 &\leq & \displaystyle\int_\Omega \sum_{\beta=1}^{N} \sum_{j=1}^{n} (D_{j}u^{\beta})  1 _{\left\{u^{\beta}>L\right\}}\eta \sum_{\alpha=1}^{N} \sum_{i=1}^{n} 2 \left(D_{i} \eta\right) \left(u^{\alpha}-L\right) 1 _{\left\{u^{\alpha}>L\right\}}  {\rm d}x \\
&\leq & \displaystyle \int_\Omega n\sum_{\beta=1}^{N} |Du^{\beta}|  1 _{\left\{u^{\beta}>L\right\}}\eta \sum_{\alpha=1}^{N} 2n |D\eta| \left(u^{\alpha}-L\right) 1 _{\left\{u^{\alpha}>L\right\}}  {\rm d}x \\
&\leq & \varepsilon \displaystyle\int_\Omega n^2 \eta^2\left(\sum_{\beta=1}^{N} |Du^{\beta}|  1 _{\left\{u^{\beta}>L\right\}}\right)^2  {\rm d}x\\
&& \displaystyle \ \  +c(\varepsilon)\displaystyle\int_\Omega n^2 |D\eta|^2 \left(\sum_{\alpha=1}^{N} \left(u^{\alpha}-L\right) 1 _{\left\{u^{\alpha}>L\right\}}\right)^2  {\rm d}x \\
&\leq & \varepsilon \displaystyle\int_\Omega n^2 2 ^ {N-1}\eta^2 \sum_{\beta=1}^{N} |D u^{\beta}|^2  1 _{\left\{u^{\beta}>L\right\}}  {\rm d}x \\
&& \displaystyle \ \   +c(\varepsilon)\displaystyle\int_\Omega n^2 2 ^ {N-1}|D\eta|^2 \sum_{\alpha=1}^{N} \left(u^{\alpha}-L\right)^2  1 _{\left\{u^{\alpha}>L\right\}} {\rm d}x\\
&\leq & \varepsilon \displaystyle\int_\Omega n^2 2 ^ {N-1}\eta^2 \sum_{\beta=1}^{N} |D u^{\beta}|^2  1 _{\left\{u^{\beta}>L\right\}}  {\rm d}x \\
&& \displaystyle \ \   + c(\varepsilon) n^2 2 ^ {N-1} \sum_{\alpha =1}^N \int_{A_{L,t}^{\alpha}}\left(\frac{u^\alpha-L}{t-s}\right)^2  {\rm d}x \\
&\leq & \varepsilon \displaystyle\int_\Omega n^2 2 ^ {N-1}\eta^2 \sum_{\beta=1}^{N} |D u^{\beta}|^2  1 _{\left\{u^{\beta}>L\right\}}  {\rm d}x \\
&& \displaystyle \ \   + c(\varepsilon) n^2 2 ^ {N-1} \sum_{\alpha=1}^{N}\left[\int_{A_{L,t}^{\alpha}}\left(\frac{u^\alpha-L}{t-s}\right)^Q  {\rm d}x +|A_{L,t}^{\alpha}|\right],
\end{array}
\end{equation}
where we have used the notation
$$
A_{L,t}^{\alpha}=\left\{x\in\Omega:u^{\alpha}>L\right\} \cap B(x_0,t).
$$

We use (\ref{basic inequality}) again, Young inequality and H\"older inequality to derive
\begin{equation}\label{6.9}
\begin{array}{llll}
I_2 &\displaystyle =\int_\Omega \sum_{\alpha=1}^{N} \sum_{i=1}^{n} F_{i}^{\alpha} \eta^{2} (D_{i}u^{\alpha}) 1 _{\left\{u^{\alpha}>L\right\}}  {\rm d}x\\
&\leq \displaystyle \int_\Omega n \sum_{\alpha=1}^{N}  |F^{\alpha}| \eta^{2} |Du^{\alpha}|  1 _{\left\{u^{\alpha}>L\right\}}  {\rm d}x \\
&\leq  \varepsilon \displaystyle\int_\Omega n\left( \sum_{\alpha=1}^{N}  |Du^{\alpha}|  1 _{\left\{u^{\alpha}>L\right\}}\eta\right)^{2} {\rm d}x +c(\varepsilon)\displaystyle\int_\Omega n \left( \sum_{\alpha=1}^{N}  |F^\alpha| 1 _{\left\{u^{\alpha}>L\right\}}\eta\right)^{2} {\rm d}x \\
& \leq \varepsilon \displaystyle\int_\Omega n 2 ^{N-1} \eta^2 \sum_{\alpha=1}^{N}  |Du^{\alpha}|^2  1 _{\left\{u^{\alpha}>L\right\}} {\rm d}x +\displaystyle c(\varepsilon )n 2 ^{N-1} \sum_{\alpha=1}^{N} \int_{A_{L,t}^{\alpha}} |F^\alpha|^2  {\rm d}x  \\
& \leq \varepsilon \displaystyle\int_\Omega n2 ^{N-1} \eta^2 \sum_{\alpha=1}^{N}  |Du^{\alpha}|^2  1 _{\left\{u^{\alpha}>L\right\}} {\rm d}x +\displaystyle c \sum_{\alpha=1}^{N}\|F^\alpha\|^{2}_{L^{m}(B_t)}|A_{L,t}^{\alpha}|^{1-\frac{2}{m}}.
\end{array}
\end{equation}
Similarly, for any $2<Q<2^*$,
\begin{equation}\label{6.10}
\begin{array}{llll}
I_3 & \displaystyle = \int_\Omega \sum_{\alpha=1}^{N} \sum_{i=1}^{n} F_{i}^{\alpha} 2 \eta\left(D_{i} \eta\right)  1 _{\left\{u^{\alpha}>L\right\}}\left(u^{\alpha}-L\right)  {\rm d}x \\
& \leq \displaystyle 2 \int_\Omega n\sum_{\alpha=1}^{N} |F^{\alpha}| \eta\left|D\eta\right|  1 _{\left\{u^{\alpha}>L\right\}}\left(u^{\alpha}-L\right) {\rm d}x \\
& \le 2n\displaystyle \sum_{\alpha=1}^{N} \int_{A_{L,t}^{\alpha}}|F^{\alpha}| \left|D\eta\right|\left(u^{\alpha}-L\right)  {\rm d}x \\
& \leq 2n \displaystyle \sum_{\alpha=1}^{N} \left[\int_{A_{L,t}^{\alpha}}\left|D\eta\right|^Q\left(u^{\alpha}-L\right)^Q  {\rm d}x +\int_{A_{L,t}^{\alpha}}|F^{\alpha}|^{Q'}  {\rm d}x  \right]\\
& \leq 2n \displaystyle \sum_{\alpha=1}^{N} \int_{A_{L,t}^{\alpha}}\left(\frac{u^{\alpha}-L}{t-s}\right)^Q  {\rm d}x  +\displaystyle 2n  \sum_{\alpha=1}^{N}\|F^\alpha\|^{Q'}_{L^{m} (B_t)}|A_{L,t}^{\alpha}|^{1-\frac{Q'}{m}}.
\end{array}
\end{equation}
Substituting the estimates (\ref{6.55}), (\ref{6.6}), (\ref{6.9}) and (\ref{6.10}) into (\ref{6.5}), we arrive at
\begin{equation*}
\begin{array}{llll}
&\displaystyle \sum_{\alpha=1}^{N} \int_{A_{L,s}^{\alpha}} \left|D u^{\alpha}\right|^{2} {\rm d}x \le \int_{\Omega} \eta^{2} \sum_{\alpha=1}^{N} \left|D u^{\alpha}\right|^{2} 1 _{\left\{u^{\alpha}>L\right\}}  {\rm d}x  \\
\leq & \displaystyle c \sum_{\alpha=1}^{N} \int_{A_{L,t}^{\alpha}}\left(\frac{u^{\alpha}-L}{t-s}\right)^Q  {\rm d}x  + c \sum_{\alpha=1}^{N}\left[|A_{L,t}^{\alpha}|+|A_{L,t}^{\alpha}|^{1-\frac{2}{m}}+|A_{L,t}^{\alpha}|^{1-\frac{Q'}{m}}\right].
\end{array}
\end{equation*}
Since $2<Q<2^*$, then $1-\frac{2}{m}<1-\frac{Q'}{m}<1$, thus
$$
|A_{L,t}^{\alpha}|^{1-\frac{Q'}{m}}=|A_{L,t}^{\alpha}|^{\frac{2-Q'}{m}}|A_{L,t}^{\alpha}|^{1-\frac{2}{m}}\leq |\Omega|^{\frac{2-Q'}{m}}|A_{L,t}^{\alpha}|^{1-\frac{2}{m}},
$$
$$
|A_{L,t}^{\alpha}|=|A_{L,t}^{\alpha}|^{\frac{2}{m}}|A_{L,t}^{\alpha}|^{1-\frac{2}{m}}\leq |\Omega|^{\frac{2}{m}}|A_{L,t}^{\alpha}|^{1-\frac{2}{m}},
$$
we use these facts and we have
\begin{equation*}
\sum_{\alpha=1}^{N}\int_{A_{L,s}^{\alpha}}\left|D u^{\alpha}\right|^{2}  {\rm d}x  \leq c \sum_{\alpha=1}^{N} \int_{A_{L,t}^{\alpha}}\left(\frac{u^{\alpha}-L}{t-s}\right)^Q  {\rm d}x  + c\sum_{\alpha=1}^{N}|A_{L,t}^{\alpha}|^{1-\frac{2}{m}},
\end{equation*}
since condition $m>n$, implies $1-\frac{2}{m}>1-\frac{2}{n}$, we get
\begin{equation*}
\sum_{\alpha=1}^{N}\int_{A_{L,s}^{\alpha}}\left|D u^{\alpha}\right|^{2}  {\rm d}x  \leq c \sum_{\alpha=1}^{N} \int_{A_{L,t}^{\alpha}}\left(\frac{u^{\alpha}-L}{t-s}\right)^Q  {\rm d}x  + c\sum_{\alpha=1}^{N}|A_{L,t}^{\alpha}|^{1-\frac{2}{n}+\varepsilon},
\end{equation*}
with $s<t$, $2<Q<2^*$ and $\varepsilon>0$, thus $u\in GDG^{+}_{2} (N,\Omega,2,Q,c,c,\varepsilon, L_0)$.

Now let $\tilde{u}=-u$. Then $ \tilde{u} \in W_0^{1,2}\left(\Omega ; \mathbb{R}^{N}\right) $. Since $ u $ satisfies  (\ref{6.3}), then $ \tilde{u} $ satisfies
$$
\int_{\Omega} \sum_{\alpha, \beta=1}^{N} \sum_{i, j=1}^{n} \tilde{a}_{i, j}^{\alpha, \beta}(x, \tilde{u}(x)) D_{j} \tilde{u}^{\beta}(x) D_{i} \varphi^{\alpha}(x) d x=\int_{\Omega}\sum_{\alpha=1}^{N} \sum_{i=1}^{n}\tilde{F^{\alpha}_{i}}D_{i} \varphi^{\alpha}(x) d x
$$
for every $ \varphi \in W_{0}^{1,2}\left(\Omega, \mathbb{R}^{N}\right)$, where
\begin{equation}\label{6.11}
\tilde{a}_{i, j}^{\alpha, \beta}(x, y):=a_{i, j}^{\alpha, \beta}(x,-y), \ \ \tilde{F^{\alpha}_{i}}=-F^{\alpha}_{i}.
\end{equation}
We observe that the new coefficients, defined by (\ref{6.11}), readily satisfy the conditions $ \left(\mathcal{A}_{0}\right)$, $\left(\mathcal{A}_{1}\right) $ and $\left(\mathcal{A}_{2}\right) $. Moreover, if $ \alpha \neq \beta $ the coefficients $ \tilde{a}_{i, j}^{\alpha, \beta}(x, y) $ satisfy $ \left(\mathcal{A}_{3}^{\prime}\right) $ provided $ a_{i, j}^{\alpha, \beta}(x, y) $ satisfy $ \left(\mathcal{A}_{3}^{\prime \prime}\right) $. Therefore, one can argue as above on $\tilde{u}$ obtaining $u\in GDG^{-}_{2}(N,\Omega,2,Q,c,c,\varepsilon, L_0)$. Theorem \ref{theorem 6.1} follows from Theorem \ref{theorem 212}.
\end{proof}

\vspace{3mm}

\noindent {\bf Acknowledgments:} The first author thanks NSFC(12071021), NSF of Hebei Province (A2019201120) and the Key Science and Technology Project of
Higher School of Hebei Province (ZD2021307) for the support.

\rm \footnotesize \baselineskip 9pt

\end{document}